\definecolor{lightgray}{gray}{0.95}
\newtheorem{theorem}{Teorema}[section]
\newtheorem{corollary}{Corolário} 
\newtheorem{lemma}{Lema}
\newtheorem{proposition}{Proposição}
\newtheorem{definition}{Definição}
\newtheorem{example}{Exemplo}
\newtheorem{remark}{Observação}
\newcommand\ddfrac[2]{\frac{\displaystyle #1}{\displaystyle #2}}
\def\min{\displaystyle\operatorname{min}}
\def\max{\operatorname{max}}
\def \T {{\scriptscriptstyle\mathrm{T}}}
\newtheorem{step size}{step size}
\begin{document}

\title{Introdu\c c\~ao a otimiza\c c\~ao de portf\'olio\footnote{Disclaimer: Os estudos apresentados neste artigo tem apenas finalidade acadêmica e os autores não possuem  nenhum vínculo com as empresas responsáveis pelo ativos citado no texto.}}
\author{
O.  P. Ferreira  \thanks{Instituto de Matem\'atica e Estat\'istica, Universidade Federal de Goi\'as,  CEP 74001-970 - Goi\^ania, GO, Brazil, E-mails:  {\tt  orizon@ufg.br, gui.araujo.franca2002@gmail.com, max@ufg.br}. Os autores foram parcialmente financiados pelo CNPq processos  304666/2021-1 and 159341/2021-3.}
\and
G. A. Fran\c ca \footnotemark[2]
\and
M. V. Lemes   \footnotemark[2]
}
\maketitle

\maketitle

\maketitle
\vspace{.5cm}

\noindent
{\bf Abstract:}
Neste trabalho, introduzimos a Teoria Moderna do Portfólio utilizando conceitos básicos de álgebra linear, cálculo diferencial, estatística e otimização. Essa teoria nos permite medir o retorno e o risco de uma carteira de investimentos, servindo como base para a tomada de decisões no mercado financeiro. Como aplicação, apresentaremos quatro estratégias de investimento bastante simples, que visam minimizar o risco de investir em apenas dois ativos, superando o rendimento do CDI.

\noindent
{\bf Keywords:}  Teoria moderna do portfólio; retorno;  risco;   optimization.
%%%%%%%%%%%%%%%

\section{Introdução}
A {\it  teoria moderna do portfólio} foi introduzida por Harry Markowitz na década de 1950 (veja \cite{Markowitz1952}). Essa teoria oferece uma estrutura matemática para que investidores avessos ao risco possam montar uma carteira ou portfólio que maximize o retorno esperado para um determinado nível de risco, considerando um conjunto pré-definido de ativos. É importante destacar que o risco é uma parte inerente de uma recompensa mais alta, e pode ser quantificado medindo a variação diária dos ativos, através do desvio padrão da série temporal de seus retornos. Assim, o risco de um portfólio pode ser medido pela combinação dos riscos de cada ativo que o compõe. Segundo essa teoria, é possível montar uma ``fronteira eficiente" de carteiras ótimas que ofereçam o máximo retorno esperado para um nível de risco fixado. Neste artigo, revisitamos a teoria moderna do portfólio e descrevemos a fronteira eficiente, mostrando que ela forma um ramo de hipérbole nas variáveis risco e retorno (veja \cite{Merton1972}). Também calculamos o portfólio de risco mínimo e utilizamos esse conceito para apresentar e analisar uma estratégia de investimento que minimiza a relação entre o risco e o retorno.

Nosso principal objetivo com este artigo é tornar a Teoria Moderna do Portfólio acessível aos alunos dos primeiros anos de graduação. Para isso, apresentamos todos os conceitos necessários para uma boa compreensão do assunto, exigindo apenas conhecimentos básicos de Álgebra Linear, Estatística e Cálculo como pré-requisitos. É importante destacar que, embora seja uma teoria valiosa, ela nem sempre funciona bem na prática. Por exemplo, a teoria não impede a concentração de recursos em determinados ativos, o que pode resultar em um modelo com pesos desprezíveis em alguns ativos e um peso elevado em outros. Do ponto de vista do mercado, nenhuma dessas situações é desejada, pois pesos desprezíveis possuem custos proporcionais relevantes e pesos elevados aumentam o risco do portfólio ao concentrá-lo em poucos ativos. Para que a teoria de Markowitz possa ser utilizada na prática, são necessárias algumas hipóteses adicionais e ajustes mais refinados para evitar tais situações. Esses detalhes estão além dos objetivos deste artigo introdutório. No entanto, mostraremos que, com uma escolha adequada dos ativos, esses problemas podem ser mitigados, conduzindo a bons resultados.

A estrutura do artigo é a seguinte: Na Seção~\ref{se:pre}, apresentaremos alguns conceitos básicos de finanças e formularemos o nosso problema. Na Seção~\ref{se:notacoes}, apresentamos as notações e resultados básicos de Álgebra Linear, Estatística e Cálculo utilizados ao longo do artigo. Na Seção~\ref{se:tmp}, estudamos os conceitos de retorno e risco de uma carteira e suas propriedades básicas. Na Seção~\ref{sec:PRM}, apresentamos uma fórmula explícita para a fronteira eficiente de um conjunto de ativos e calculamos o portfólio de mínimo risco associado a esse conjunto. Na Seção~\ref{Sec:Aplicacao}, apresentamos e discutimos simulações de estratégias de alocação de capital. Por fim, concluímos o artigo na Seção~\ref{sec:conclusion} com algumas considerações finais.

%%%%%%%%%%%%%%%%%%%%%%%%%%%%%%%%%%%%%%%%%%%%%%%%%%%%%%%%%%%%%%%%%%%%%
\section{Preliminares} \label{se:pre}
Nesta seção apresentaremos  alguns conceitos básico de finanças. Em particular, mostramos de modo   intuitivo como o desvio padrão pode ser visto como uma  medida de risco. Também  formulamos o problema central que estamos interessados em discutir neste artigo.

%%%%%%%%%%%%%%%%%%%%%%%%%%%%%%%%
\subsection{Risco em finanças}
Em finanças, {\it risco é a probabilidade de que os resultados reais de um investimento sejam diferentes dos resultados esperados, mesmo quando esses  resultados sejam positivos}. Em temos mais técnicos, o {\it risco é definido como a volatilidade dos retornos}, envolvendo  tanto retornos positivos quanto negativos. Assim, o uso adequado em finanças  de um   conceito envolvendo  {\it risco versus  retorno} deve significar que  ativos mais arriscados devem ter retornos esperados mais altos para compensar os investidores pela maior volatilidade. Por exemplo, os retornos obtidos ao investir em uma Startup recém criada ou em criptomoedas  podem alcançar retornos muito superiores que  investir em  Título de Renda Fixa, no entanto investir nesses ativos é bem mais arriscado que investir em Título do Tesouro\footnote{ Veja mais sobre isto em https://corporatefinanceinstitute.com/resources/knowledge/finance/risk/}.

%%%%%%%%%%%%%%%%%%%%%%%%%%%%%%%%
\subsection{O Desvio padrão como medida de risco}
O {\it desvio padrão} é um conceito matemático que mede a dispersão dos  dados individuais em relação ao seu valor médio.  Ao avaliar um ativo  o investidor pode usar o desvio padrão dos retornos do ativo como medida de risco devido à sua capacidade de mostrar a volatilidade de uma negociação. Em outros termos, medir a probabilidade do retorno de um ativo  mover em uma certa direção,  o que pode resultar em ganhos ou perda ao investir no ativo.   Por exemplo, um ativo  que atinge altos níveis de retorno  e reverte fortemente  esta tendencia com muita frequência  possui  um desvio padrão  alto no período em consideração.  Isso significa que o ativo é altamente volátil e carrega um alto  grau  de risco com o qual um investidor avesso ao risco se sentirá desconfortável para investir nele. Por outro lado,  ativos  com um histórico de fornecer retornos com pequenas variações  tem  baixo risco, pois provavelmente permanecerão no mesmo intervalo de retorno por um longo tempo.  Assim,  ao usar o desvio padrão para medir o risco, os investidores  estão interessados em saber como os retornos estão espalhados e assim  determinar o nível de risco do investimento.
\begin{remark} \label{re:ddpmr1}
    É importante destacar que o desvio padrão como medida  de risco mostra apenas como os retornos  de um investimento em um determinado período  são distribuídos. No entanto,  isso  não significa que esta distribuição terá o mesmo comportamento no futuro. De fato, os  investimentos podem ser afetados por outros fatores não relacionados diretamente ao comportamento prévio do  ativo, como mudanças nas taxas de juros futuros e novas concorrência de mercado e etc, e assim o  retorno   pode ficar fora do intervalo previsto. Isso significa que o desvio padrão não deve ser usado como a ferramenta final de medição de risco, mas deve ser usado juntamente com outras funções de medição de risco.
\end{remark}
\begin{remark} \label{re:ddpmr2}
    Estudos mostram (veja em \cite{Roncalli2014}) que quando os retornos de determinado ativo possui distribuição normal, o desvio padrão é extremamente adequado para medir o risco. Isso significa que estamos assumindo que o ativo possui uma probabilidade uniforme de atingir valores acima ou abaixo da média. Esta hipótese  pode não se aplicável  a todos os tipos de ativos. Assim, a seleção prévia dos ativos é parte fundamental para a montagem de um portfólio vencedor.
\end{remark}
%%%%%%%%%%%%%%%%%%%%%%%%%%%%%%%%
\subsection{Formulação do problema}{}
Na Tabela~\ref{tab:valor} listamos os preços   de fechamento ajustado diários dos  ativos  IVVB11, BOVA11 e BBAS3 negociados na bolsa brasileira entre os dia 09 à  16/04/2021, estes dados foram  obtidos usando o Google Finance.
\begin{table}[H]
    \begin{footnotesize}
        \begin{center}
            \begin{tabular}{lcccc}
                Período    & $IVVB11$ & $BOVA11$ & $BBAS3$ \\
                \hline
                09/04/2021 & $254,00$ & $113,01$ & $29,19$ \\
                12/04/2021 & $256,54$ & $114,40$ & $29,55$ \\
                13/04/2021 & $257,20$ & $114,67$ & $29,55$ \\
                14/04/2021 & $254,29$ & $115,60$ & $29,60$ \\
                15/04/2021 & $254,95$ & $116,20$ & $29,64$ \\
                16/04/2021 & $255,00$ & $116,46$ & $29,77$ \\
                \hline
            \end{tabular}
            \caption{\footnotesize  Preços de fechamento ajustado.} \label{tab:valor}
        \end{center}
    \end{footnotesize}
\end{table}
Usando os dados da tabela Tabela~\ref{tab:valor} podemos calcular os retornos diário  dos ativos entre os dia 12 à  16/04/2021. Então  podemos calcular o retorno médio e  desvio padrão de cada ativo neste período.  Na  Tabela~\ref{tab:retornoP1},  apresentamos  os retornos diários  dos ativos IVVB11, BOVA11 e BBAS3 entre os dia 12 à  16/04/2021.  Na última coluna desta tabela estão  os  retornos diários,  retorno médio e  desvio padrão do portifólio /portfólio com capital igualmente distribuídos,  ou seja,  composto por $1/3$  do capital aplicado no IVVB11, $1/3$ no BOVA11 e  $1/3$  no BBAS3, para facilitar a notação denotamos este portfólio por
$$
    P_1:=((1/3)\text{IVVB11}, (1/3)\text{BOVA11}, (1/3)\text{BBAS3}).
$$
\begin{table}[H]
    \begin{footnotesize}
        \begin{center}
            \begin{tabular}{lrrr|r}
                Período       & IVVB11    & BOVA11   & BBAS3    & Portfólio P1 \\
                \hline
                12/04/2021    & $0,0100$  & $0,0123$ & $0,0123$ & $0,0115$     \\
                13/04/2021    & $0,0026$  & $0,0024$ & $0,0000$ & $0,0016$     \\
                14/04/2021    & $-0,0113$ & $0,0081$ & $0,0017$ & $-0,0005$    \\
                15/04/2021    & $0,0026$  & $0,0052$ & $0,0014$ & $0,0023$     \\
                16/04/2021    & $0,0002$  & $0,0022$ & $0,0044$ & $0,0025$     \\
                \hline
                Retorno Médio & $0,0008$  & $0,0060$ & $0,0039$ & $0,0036$     \\
                Desvio Padrão & $0,0077$  & $0,0042$ & $0,0049$ & $0,0046$
            \end{tabular}
            \caption{\footnotesize  Retorno do portfólio P1.} \label{tab:retornoP1}
        \end{center}
    \end{footnotesize}
\end{table}
Vamos comparar o desempenho do  portfólio $P1$ com a carteira/portfólio  composto por $20\%$  do capital aplicado no IVVB11, $50\%$ no BOVA11 e  $30\%$  no BBAS3, o qual denotamos por
$$
    P_2:=(0.2\text{IVVB11}, 0.5\text{BOVA11}, 0.3\text{BBAS3}).
$$
Os retornos diários,  retorno médio e  desvio padrão do portfólio $P_2$ entre os dia 12 à  16/04/2021 esta apresentado na  última coluna da Tabela~\ref{tab:retornoP2}.
\begin{table}[H]
    \begin{footnotesize}
        \begin{center}
            \begin{tabular}{lrrr|r}
                Período       & IVVB11    & BOVA11   & BBAS3    & Portfólio P2 \\
                \hline
                12/04/2021    & $0,0100$  & $0,0122$ & $0,0123$ & $0,0118$     \\
                13/04/2021    & $0,0025$  & $0,0023$ & $0,0000$ & $0,0016$     \\
                14/04/2021    & $-0,0113$ & $0,0081$ & $0,0016$ & $0,0022$     \\
                15/04/2021    & $0,0025$  & $0,0051$ & $0,0013$ & $0,0035$     \\
                16/04/2021    & $0,0001$  & $0,0022$ & $0,0043$ & $0,0024$     \\
                \hline
                Retorno Médio & $0,0008$  & $0,0060$ & $0,0039$ & $0,0043$     \\
                Desvio Padrão & $0,0077$  & $0,0042$ & $0,0049$ & $0,0042$
            \end{tabular}
            \caption{\footnotesize  Retorno do portfólio P2.} \label{tab:retornoP2}
        \end{center}
    \end{footnotesize}
\end{table}
As Tabelas~\ref{tab:retornoP1} e \ref{tab:retornoP2} mostram que  o retorno médio do portfólio $P2$ é $21\%$ maior que o retorno médio do portfólio $P1$, sendo que  o risco do portfólio $P2$ é  $9\%$ menor que do portfólio $P1$.  Assim, o portfólio $P2$ possui um maior retorno com menor risco quando comparado com o portfólio $P1$.  Isso mostra que, uma vez escolhido os ativos,  a porcentagem   dos recursos alocada em cada um dos ativos, isto é, a montagem do portfólio,  pode impactar significativamente o retorno  e  risco do portfólio. Então o nosso problema é  otimizar a montagem de um portfólio, o qual pode ser descrito   da   seguinte forma:

{\bf Problema:} {\it Uma vez escolhido os ativos,  como devemos proceder para  montar uma  carteira ótima, isto é, uma carteira que ofereça o máximo retorno possível  para um determinado nível de risco fixado. Além disso,   como podemos  montar   uma carteira com o menor risco possível.}

\begin{remark}\label{obs:desempenho}
    Entre dois portfólios com o mesmo retorno podemos considerar aquele que teve o menor risco como o que teve o melhor desempenho entre os dois. Do mesmo modo se dois portfólios tiveram o mesmo risco será considerado o portfólio de melhor desempenho aquele que possuir o maior retorno. De uma forma geral, o quociente entre o retorno e o risco pode ser considerado uma \textbf{medida de desempenho} entre dois portfólios. Essa medida nos diz quanto de retorno o portfólio adiciona para cada $1\%$ de risco corrido. Logo, quanto menor for este quociente risco/retorno melhor será o desempenho do portfólio. Esta medida nos permite comparar o desempenho entre portfólios com diferentes níveis de risco e diferentes taxas de retorno.
\end{remark}

%%%%%%%%%%%%%%%%%%%%%%%%%%%%%%%%%%%%%%%%%%%%%%%%%%%%%%%%%%%%%%%%%%%%%
\section{Notações e resultados básicos} \label{se:notacoes}
Nesta seção apresentaremos notação e resultados básicos de Álgebra Linear,  Estatística e Cálculo Diferencial   usados através do artigo, mais detalhes sobre os assuntos discutidos aqui podem ser encontrados, por exemplo, em  \cite{Boldrini1980,Strang2013}.

%%%%%%%%%%%%%%%%%%%%%%%%%%%%%%%%%%%%%%%%%%%%%%%%%%%%%%%%%%%%%%%%%%%%%%%
\subsection{Matrizes e transforma\c c\~oes lineares} \label{sec:mtl}

Seja $n$ um número inteiro positivo.  Definimos o espaço vetorial $\mathbb{R}^n, $ como sendo o conjunto das $n$-úplas ordenadas, $u=(u_1, u_2, \ldots, u_n)$, em que cada $u_i \in \mathbb{R}$.  Sejam $u=(u_1, u_2, \ldots, u_n)$, $v=(v_1, v_2, \ldots, v_n)$ vetores em $\mathbb{R}^n$ e $\alpha \in \mathbb{R}$. Então definimos a {\it soma}  $u+v \in \mathbb{R}^n$ e o {\it produto por escalar} $\alpha u \in \mathbb{R}^n$ como sendo:
$ u+v=(u_1+v_1, u_2+v_2, \ldots, u_n+v_n)$ e  $\alpha u= (\alpha u_1, \alpha  u_2, \ldots, \alpha u_n)$, respectivamente. O {\it produto interno} de $u$ por $v$ é definido como sendo $\langle u,v \rangle=u_1v_1+u_2v_2+\ldots+u_nv_n$ e a {\it norma euclidiana}  associada  por $\|u\|=\sqrt{\langle u,u\rangle}.$   Assim,   $\|u + v\| \leq \|u\| + \|v\|$ e $\langle u,v \rangle \leq  \|u\| \|v\|$. Chamamos o espaço $\mathbb{R}^n$ dotado da norma $||\cdot||$ de {\it Espaço Euclidiano $n$-dimensional}. Definimos $e:=(1,1,\ldots,1) \in \mathbb{R}^n$,
$e^1:=(1,0,\ldots, 0), e^2:=(0,1,\ldots,0), \ldots e^n:=(0,0,\ldots,1)$. Portanto,  $e=e^1+e^2+\ldots+e^n$ e em geral  $u=u_1 e^1+u_2 e^2+\ldots+u_n e^n$.

O conjunto das matrizes de ordem $m\times n$ é denotado por $\mathbb{R}^{m \times n}$ e  uma matriz $A\in  \mathbb{R}^{m \times n}$ por
$$
    A=\begin{bmatrix}
        a_{11} & \dots  & a_{1n} \\
        \vdots & \ddots & \vdots \\
        a_{m1} & \dots  & a_{mn} \\
    \end{bmatrix},
$$
ou $A=(a_{ij})$.  A {\it matriz transposta} de $A$ é denotada por  $A^T \in \mathbb{R}^{n\times m}$.   Vamos adotar a convenção de que vetores em $\mathbb{R}^n$ são matrizes em $\mathbb{R}^{n \times 1}$, isto é, $\mathbb{R}^n\equiv \mathbb{R}^{n \times 1}$. Neste caso, temos  $\|u\|=u^Tu$. Definimos a {\it matriz identidade} de ordem $n$ como sendo a matriz quadrada $I \in \mathbb{R}^{n \times n}$, tal que os elementos de sua diagonal principal são iguais a $1$ e todos os outros elementos são $0$. Dadas  $A=(a_{ij}), B=(B_{ij}) \in \mathbb{R}^{m \times n}$, definimos a matriz soma $A+B:= ((a+b)_{ij})\in \mathbb{R}^{m \times n}$  tal que  $(a+b)_{ij}=a_{ij}+b_{ij}$. Dado  $\alpha \in \mathbb{R}$  definimos a matriz  produto por  $\alpha A:=((\alpha a)_{ij}) \in \mathbb{R}^{m \times n}$  tal que $(\alpha a)_{ij}=\alpha  a_{ij}$. Sejam  $A \in \mathbb{R}^{m\times n}$ e $B \in \mathbb{R}^{n \times p}$,  definimos o produto das matrizes $A$ por $B$ como sendo a matriz $A B \in \mathbb{R}^{m\times p}$ tal que $ (ab)_{ij}=\sum_{k=1}^na_{ik} b_{kj}$. Assim,  dados  matrizes $A \in \mathbb{R}^{m\times n}$ e $B,C \in \mathbb{R}^{n \times p}$, então valem  as seguintes  igualdades: $A(B+C)=AB+AC$, $(A+B)^T=A^T+B^T$, $(\alpha A)^T=\alpha A^T$, $(A^T)^T=A$ e  $(AB)^T=B^TA^T$.  Dado  $A \in \mathbb{R}^{n \times n}$,  a matriz inversa de $A$ é definida  como sendo a única matriz $A^{-1} \in \mathbb{R}^{n \times n}$ tal que $A A^{-1}=A^{-1} A=I$. Quando existir a matriz inversa, diremos que $A$ é uma matriz inversível e, caso contrário, diremos que $A$ é uma matriz singular.  Suponha que $A,B \in \mathbb{R}^{n\times n}$ são matrizes inversíveis. Então o produto $AB$ é uma matriz inversível e $(AB)^{-1}=B^{-1}A^{-1}$. Uma {\it norma de matrizes } em  $ \mathbb{R}^{n \times n}$ é uma função a valores reais $\| \cdot \|:  \mathbb{R}^{n \times n} \to [0, +\infty)$ com as seguintes propriedades: $\|A\| \geq 0$, $\|A\| = 0$ se e somente se  $A$ é a matriz nula $0$,   $\|\alpha A\| = |\alpha|\|A\|$, $\|A + B \| \leq \|A\| + \|B\|$,  $\|AB\| \leq \|A\| \|B\|$, para toda $\displaystyle{A}, \displaystyle{B} \in \mathbb{R}^{n \times n}$.
Seja $\| \cdot \|$ é uma norma de vetores em  $ \mathbb{R}^{n \times 1}$, então
$$
    \|A\| = \max_{\|\displaystyle{x}\| = 1} \|A\displaystyle{x}\|,
$$
é uma norma de matrizes chamada {\it norma induzida} ou norma de matrizes  associada com a norma de vetor. Podemos mostrar que dados $u\in  \mathbb{R}^{n \times 1}$ e  $A\in \mathbb{R}^{n \times n}$. Se  $\| \cdot \|$ é uma norma de matrizes induzida por uma  norma de vetores então
\begin{equation} \label{eq:rnvnm}
    \|Au\| \leq \|A\| \|u\|,
\end{equation}
veja \cite[7B, pág. 355]{Strang2013}. A seguir introduzimos uma definição que será fundamental na teoria que será desenvolvida na Seção~\ref{eq:mml} e  nas seções subsequentes. Dizemos que uma matriz  simétrica $A\in \mathbb{R}^{n \times n}$ é {\it positiva definida} se $x^TAx>0$ para todo  $x\in \mathbb{R}^{n \times 1}$ e $x\neq 0$. Toda matriz positiva definida $A$ é inversível,   isto é,     $\det A\neq0$, veja \cite[6B, pág. 318]{Strang2013}.

Um sistema de $m$ equações  $n$ variáveis $x_1, x_2, \ldots, x_. \in \mathbb{R}$ é escrito na seguinte forma
$$
    \left\{ \begin{array}{ccccccccc}
        a_{11}x_1 & +      & a_{12}x_2 & +      & \ldots & +      & a_{1n}x_n & =      & b_1    \\
        a_{21}x_1 & +      & a_{22}x_2 & +      & \ldots & +      & a_{2n}x_n & =      & b_2    \\
        \vdots    & \vdots & \vdots    & \vdots & \ddots & \vdots & \vdots    & \vdots & \vdots \\
        a_{m1}x_1 & +      & a_{m2}x_2 & +      & \ldots & +      & a_{mn}x_n & =      & b_m
    \end{array}
    \right.
$$
onde  $a_{i,j}, b_i \in \mathbb{R}$. Um vetor ${\bar x}=({\bar x}_1, {\bar x}_2, \ldots, {\bar x}_n) \in \mathbb{R}^n$ que satisfaz todas as $m$ equações simultaneamente é dito ser uma solução do sistema. Note que um sistema de $m$ equações nas $n$ variáveis $x_1, \ldots, x_n$ pode ser escrito em forma de uma equação matricial $Ax=b,$ onde
$$A=\begin{bmatrix}
        a_{11} & \dots  & a_{1n} \\
        \vdots & \ddots & \vdots \\
        a_{m1} & \dots  & a_{mn} \\
    \end{bmatrix}, \, \quad
    x=\begin{bmatrix}
        x_1    \\
        \vdots \\
        x_n
    \end{bmatrix}, \quad \,
    b=\begin{bmatrix}
        b_1    \\
        \vdots \\
        b_m
    \end{bmatrix}.
$$
Dado um sistema linear $Ax=b$ com  $A$  quadrada e inversível, então $x=A^{-1}b$ é sua  uma única solução.

Uma fun\c c\~ao $T:\mathbb{R}^n \to \mathbb{R}^m$ \'e chamada de {\it uma transforma\c c\~ao linear } se, para quaisquer $u, v \in \mathbb{R}^n$ e $\lambda \in \mathbb{R}$, s\~ao satisfeitas as seguintes propriedades: $T(u+v)=T(u) + T(v)$ e $T(\lambda u)=\lambda T(u)$. Sejam $A \in \mathbb{R}^{m \times n}$ e $T:\mathbb{R}^n \to \mathbb{R}^m$ definida por $Tx=Ax$.  \'E imediato verificar que a  fun\c c\~ao $T$ \'e uma transforma\c c\~ao linear.  Dada uma transforma\c c\~ao linear $T:\mathbb{R}^n \to \mathbb{R}^m$,  existe uma  matriz $A \in \mathbb{R}^{m \times n}$ que a representa,  i. e., $T$ pode ser colocada na forma $Tx=Ax$.  Neste caso, dizemos que $A$ \'e a \'unica matriz da transforma\c c\~ao linear $T$ com  respeito aos $n$ vetores can\^onicos de $\mathbb{R}^n$. Portanto, fixado os $n$ vetores can\^onicos de $\mathbb{R}^n$, est\'a estabelecida uma correspond\^encia biun\'{\i}voca entre as matrizes de $\mathbb{R}^{m \times n}$ e as transforma\c c\~ao linear $T:\mathbb{R}^n \to \mathbb{R}^m$. Em particular, uma transforma\c c\~ao linear $T: \mathbb{R}^n \to \mathbb{R}$ \'e chamada de {\it  funcional linear}. Ent\~ao,  existe um \'unico  vetor $a=(a_1, \cdots, a_n)^T\in \mathbb{R}^{ 1 \times n}$,  ou matriz $1\times n$,  tal que o funcional linear $T$ seja escrito na forma $Tv=a^Tv,$ onde $a_i=Te^i$ e $e^i=(0,\cdots, 1, \cdots, 0)^T$ \'e o i-\'esimo vetor can\^onico.

%%%%%%%%%%%%%%%%%%%%%%%%%%%%%%%%%%%%%%%%%%%%%%%%%%%%%%%%%%%%%%%%%%%%%%%
\subsection{ Diferenciabilidade de fun\c c\~oes reais de v\'arias vari\'aveis}
%%%%%%%%%%%%%%%%%%%%%%%%%%%%%%%%%%%%%%%%%%%%%%%%%%%%%%%%%%%%%%
A  $i$-\'esima {\it derivada parcial} de $f:\mathbb{R}^{n} \to \!\!R$  no ponto $p\in \mathbb{R}^{n} $ \'e definida por
\begin{equation} \label{def:ddpar}
    \frac{\partial f}{\partial x_i}(p)=\lim_{t \to 0}\frac{f(p+te^i)-f(p)}{t}.
\end{equation}
\begin{definition} \label{Def:dif}
    Dizemos que $f:\mathbb{R}^{n} \to \!\!R$ \'e diferenci\'avel no ponto $p\in \mathbb{R}^{n} $ se existe um funcional linear $T: \mathbb{R}^n \to \mathbb{R}$  tal que, para todo $v\in \mathbb{R}^n$ com $p+v \in \mathbb{R}^{n} $, vale
    \begin{equation} \label{def:dif}
        f(p+v)=f(p)+Tv+ \|v\|\eta(v),\;\;\;\; \mbox{onde}\;\; \lim_{v\to 0}\eta(v)=0.
    \end{equation}
\end{definition}
\begin{remark} \label{obs:fl}
    De acordo com a Seção~\ref{sec:mtl}, existe um \'unico  vetor $a=(a_1, \cdots, a_n)\in \mathbb{R}^n$ tal que o funcional linear $T: \mathbb{R}^n \to \mathbb{R}$ seja escrito na forma $Tv=a^Tv,$ onde $a_i=Te^i$.
\end{remark}
Seja $f:\mathbb{R}^{n} \to \!\!R$ uma fun\c c\~ao diferenci\'avel no ponto $p\in \mathbb{R}^{n} $. Vamos agora calcular o funcional linear $T: \mathbb{R}^n \to \mathbb{R}$ da Defini\c c\~ao \ref{Def:dif}, i.e., o vetor $a$ que representa $T$ dado pela Observa\c c\~ao \ref{obs:fl}. Assim tomando $v=te^i$ em \eqref{def:dif} temos $f(p+te^i)=f(p)+tTe^i+ |t|\eta(te^i)$ o que implica
$$
    \frac{f(p+te^i)-f(p)}{t}=Te^i\pm\eta(te^i).
$$
Agora, fazendo $t\to 0$  na \'ultima igualdade e notando que $\lim_{t\to 0}\eta(te^i)=0$  temos  de \eqref{def:ddpar} que
\begin{equation} \label{eq:dp-1}
    \frac{\partial f}{\partial x_i}(p)=Te^i.
\end{equation}
A igualdade \eqref{eq:dp-1} vale para $i=1, \cdots, n$. Assim,  segue da Observa\c c\~ao~\ref{obs:fl} que o funcional linear $T: \mathbb{R}^n \to \mathbb{R}$ da Defini\c c\~ao \ref{Def:dif} \'e dado por
\begin{equation} \label{eq:dp-2}
    Tv=\sum_{i=1}^{n}\frac{\partial f}{\partial x_i}(p)v_i.
\end{equation}
Definimos o  {\it vetor gradiente} de $f$ no ponto $p$ por
$$
    \nabla f(p)=\left( \frac{\partial f}{\partial x_1}(p), \cdots,  \frac{\partial f}{\partial x_n}(p)\right)^T \in  \mathbb{R}^{n \times 1}.
$$
Portanto, quando $f$ \'e diferenci\'avel no ponto $p$, segue de \eqref{eq:dp-2} que o vetor gradiente $\nabla f(p) \in  \mathbb{R}^{n \times 1}$ determina o funcional linear $T$ da Defini\c c\~ao \ref{Def:dif}, o qual  denotamos de agora em diante por $df(p)$. Assim, quando $f$ \'e diferenci\'avel no ponto $p$,  a {\it diferencial} de $f$ nesse ponto \'e  definida com sendo o funcional linear $df(p):\mathbb{R}^n \to \mathbb{R}$ dado por
\begin{equation} \label{eq:dif-3}
    df(p)v=\nabla f(p)^T v.
\end{equation}
Note que n\~ao \'e necess\'ario que $f$ seja diferenci\'avel para definir o vetor gradiente \'e necess\'ario apenas a exist\^encia das derivadas parciais. Mas neste caso, o funcional linear associado ao vetor gradiente n\~ao ``merece o nome de diferencial''.
\begin{example} \label{ex:fq}
    Seja $Q \in \mathbb{R}^{n\times n}$. Defina $f:\mathbb{R}^n \to \mathbb{R}$ pondo $f(x)=x^TQx$. A diferencial e o vetor gradiente de $f$ no ponto $x$ s\~ao dados, respectivamente, por:
    \begin{equation} \label{gfq}
        df(x)v= v^T(Q+Q^T)x, \qquad  \nabla f(x)=(Q+Q^T)x.
    \end{equation}
    De fato, primeiro note que $f(x+v)=f(x)+v^T(Q+Q^T)x+ v^TQv$ ou equivalentemente
    $$
        f(x+v)=f(x)+v^T(Q+Q^T)x+ \|v\|\eta(v), \qquad  \eta(v):= \frac{1}{\|v\|}v^T Q v.
    $$
    Usando \eqref{eq:rnvnm} temos   $|\eta(v)|\leqslant \|Q\|\|v\|$. Assim,  $\lim_{v\to 0}\eta(v)=0$. Portanto $f$ \'e diferenci\'avel e as igualdades  \eqref{gfq} valem.
\end{example}
\begin{theorem} \label{th:rd}
    Sejam $f,\, g,\, h: \mathbb{R}^n  \to \mathbb{R}$ fun\c c\~oes diferenci\'aveis  para todo $x\in \mathbb{R}^{n} $. Ent\~ao $f+g : \mathbb{R}^{n} \to \mathbb{R}$ e  $fg : \mathbb{R}^{n} \to \mathbb{R}$ s\~ao diferencia\'veis e valem as seguintes propriedades:
    \begin{enumerate}
        \item[i)] $\nabla (f+g)(x)= \nabla f(x) +\nabla g(x)$;
        \item[ii)] $\nabla (fg)(x)= f(x) \nabla g(x)+g(x) \nabla f(x)$.
    \end{enumerate}
\end{theorem}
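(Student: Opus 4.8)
The plan is to apply Definition \ref{Def:dif} directly in both cases, using the representation \eqref{eq:dif-3} of the differential via the gradient together with the Cauchy--Schwarz inequality $\langle u,v\rangle\le\|u\|\|v\|$ to control the remainder terms. Fix $x\in\mathbb{R}^n$. Since $f$ and $g$ are differentiable at $x$, for every $v\in\mathbb{R}^n$ we may write
$$f(x+v)=f(x)+df(x)v+\|v\|\eta_f(v),\qquad g(x+v)=g(x)+dg(x)v+\|v\|\eta_g(v),$$
with $\eta_f(v)\to0$ and $\eta_g(v)\to0$ as $v\to0$.

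For (i) I would simply add the two expansions, obtaining
$$(f+g)(x+v)=(f+g)(x)+\big(df(x)+dg(x)\big)v+\|v\|\big(\eta_f(v)+\eta_g(v)\big).$$
The map $v\mapsto\big(df(x)+dg(x)\big)v$ is a linear functional and $\eta_f(v)+\eta_g(v)\to0$, so Definition \ref{Def:dif} shows $f+g$ is differentiable at $x$ with $d(f+g)(x)v=df(x)v+dg(x)v$; comparing with \eqref{eq:dif-3} gives $\nabla(f+g)(x)=\nabla f(x)+\nabla g(x)$.

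For (ii) I would multiply the two expansions. The constant term is $f(x)g(x)$; the terms linear in $v$ are $f(x)\,dg(x)v$ and $g(x)\,df(x)v$; and the remaining terms are the quadratic term $\big(df(x)v\big)\big(dg(x)v\big)$ together with several terms carrying an explicit factor $\|v\|$, coming from the products of the $\|v\|\eta_f(v)$ and $\|v\|\eta_g(v)$ pieces with bounded quantities. Grouping all of these into a single remainder $R(v)$, I would show $R(v)/\|v\|\to0$: by \eqref{eq:dif-3} and Cauchy--Schwarz, $|df(x)v|\le\|\nabla f(x)\|\,\|v\|$ and $|dg(x)v|\le\|\nabla g(x)\|\,\|v\|$, so $\big(df(x)v\big)\big(dg(x)v\big)$ is bounded by $\|\nabla f(x)\|\,\|\nabla g(x)\|\,\|v\|^2$, while each of the other terms equals $\|v\|$ times a factor that tends to $0$ as $v\to0$ (a product of $\eta_f(v)$ or $\eta_g(v)$ with a quantity that stays bounded near $0$). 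Hence $R(v)=\|v\|\eta(v)$ with $\eta(v)\to0$, and Definition \ref{Def:dif} yields that $fg$ is differentiable at $x$ with $d(fg)(x)v=f(x)\,dg(x)v+g(x)\,df(x)v=\big(f(x)\nabla g(x)+g(x)\nabla f(x)\big)^{\T}v$; comparing once more with \eqref{eq:dif-3} gives $\nabla(fg)(x)=f(x)\nabla g(x)+g(x)\nabla f(x)$.

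The only delicate point is the bookkeeping in the product case: one must keep track of the five summands produced by expanding the product and check that each, after dividing by $\|v\|$, tends to $0$. The tool that makes this routine is the linear-functional bound $|df(x)v|\le\|\nabla f(x)\|\,\|v\|$, which turns the cross term $\big(df(x)v\big)\big(dg(x)v\big)$ into an $O(\|v\|^2)$ quantity and hence negligible after division by $\|v\|$; the terms with an explicit $\|v\|$ factor are handled by boundedness of $f(x)$, $g(x)$, $df(x)v$, $dg(x)v$ for $v$ near $0$ together with $\eta_f,\eta_g\to0$.
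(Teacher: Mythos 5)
Your proof is correct. Note that the paper does not actually prove Theorem~\ref{th:rd}: it states the result and defers to the cited reference (\cite[Cap\'{\i}tulo 2]{Avila1987}), so there is no argument in the text to compare against. Your argument is the standard one and is exactly what that omitted proof would look like: add the two first-order expansions for the sum rule, multiply them for the product rule, and absorb everything beyond the constant and linear terms into a remainder controlled by the bound $|df(x)v|\le\|\nabla f(x)\|\,\|v\|$ from Cauchy--Schwarz. The only quibble is cosmetic bookkeeping: expanding the product of two three-term expansions produces six remainder terms rather than five, but you identify all of them by type (one quadratic term of order $\|v\|^2$, and the rest carrying an explicit $\|v\|\eta$ factor), so nothing is missing from the estimate.
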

Para mais detalhes sobre o assunto tratado nesta seção veja \cite[Capítulo 2]{Avila1987}.
%%%%%%%%%%%%%%%%%%%%%%%%%%%%%%%%%%%%%%%%%%%%%%%%%%%%%%%%
%%%%%%%%%%%%%%%%%%%%%%%%%%%%%%%%%%%%%%%%%%%%%%%%
\subsubsection{Método dos multiplicadores de Lagrange} \label{eq:mml}
Nosso objetivo nesta seção é determinar, sobre hipóteses adequadas, a solução explicita de um problema quadrático homogêneo com restrições lineares. Para isso, vamos primeiro relembrar  o método dos multiplicadores de Lagrange para resolver  o seguinte problema
\begin{equation} \label{eq:gp}
    \begin{array}{cr}
        \text{Minimizar}_x ~ & f(x)     \\
        \text{tal que} ~     & h_1(x)=0 \\
                             & \vdots   \\
                             & h_s(x)=0
    \end{array}
\end{equation}
onde {\it $f:\mathbb{R}^{n\times 1}\to  \mathbb{R}$, $h_i:\mathbb{R}^{n\times 1}\to  \mathbb{R}$, para $i=1, \ldots, s$ são funções  diferenciáveis e os seguintes  vetores $\nabla h_1(x), \ldots, \nabla h_s(x)$  sejam linearmente independentes}.  A função  Lagrangeano $L: \mathbb{R}^{n\times 1}\times \mathbb{R}^{s\times 1}\to \mathbb{R}$ associada ao problema~\eqref{eq:gp}  é definida   por
\begin{equation} \label{eq:lagra}
    L(x, \lambda):=f(x)-\sum_{i=1}^s \lambda_ih_i(x).
\end{equation}
O método dos multiplicadores de Lagrange assegura que para  toda solução local $x^*\in \mathbb{R}^{n\times 1} $ do problema~\eqref{eq:gp}  existe $\lambda^*\in \mathbb{R}^{s\times 1}$  tal que $(x^*, \lambda^*)$ é ponto crítico do Lagrangeano  $L$, isto é, $(x^*, \lambda^*)$ é solução da equação  $\nabla L(x, \lambda)=0$, ou ainda, $(x^*, \lambda^*)$ é  solução das seguintes equações:
\begin{align*}
    \nabla f(x)-\sum_{i=1}^s \lambda_i \nabla h_i(x) & =0     \\
    h_1(x)                                           & =0     \\
                                                     & \vdots \\
    h_s(x)                                           & =0
\end{align*}
Para mais detalhes veja por exemplo  \cite[Capítulo 3]{Avila1987}.  As equações acima  são conhecidas como {\it condições  de otimalidade} do problema~\eqref{eq:gp}.  Para provar o próximo  teorema  vamos usar o método dos multiplicadores de Lagrange.
\begin{theorem} \label{th:sqp}
    Sejam $Q \in \mathbb{R}^{n\times n}$ uma matriz simétrica positiva definida, $A \in \mathbb{R}^{s\times n}$  e  $b \in \mathbb{R}^{s\times 1}$.  Suponhamos que $posto(A)=s<n$. Nessas condições, o  minimizador do problema quadrático
    \begin{equation} \label{eq;pq}
        \begin{array}{cr}
            \text{Minimizar}_x ~ & \frac{1}{2}x^TQx \\
            \text{tal que} ~     & Ax=b
        \end{array}
    \end{equation}
    é  $ x^*:=Q^{-1}A^T(AQ^{-1}A^T)^{-1}b\in \mathbb{R}^{n\times 1}$.
\end{theorem}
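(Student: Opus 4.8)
The plan is to apply the method of Lagrange multipliers from Section~\ref{eq:mml} and then upgrade the resulting necessary condition to a genuine global minimum using the positive definiteness of $Q$. First I would set $h(x):=Ax-b$, so that the $i$-th constraint reads $h_i(x)=\langle a^{(i)},x\rangle-b_i=0$, where $a^{(i)}\in\mathbb{R}^{n\times 1}$ is the transpose of the $i$-th row of $A$; then $\nabla h_i(x)=a^{(i)}$, and the hypothesis $\mathrm{posto}(A)=s$ is precisely the statement that $\nabla h_1(x),\dots,\nabla h_s(x)$ are linearly independent, so the method applies. Using Example~\ref{ex:fq} with the symmetric matrix $Q$, the objective $\frac12 x^TQx$ has gradient $Qx$, so the Lagrangian $L(x,\lambda)=\frac12 x^TQx-\sum_{i=1}^{s}\lambda_i h_i(x)$ produces the stationarity equations $Qx-A^T\lambda=0$ together with $Ax=b$.

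Next I would solve this system explicitly. Since $Q$ is positive definite it is invertible (as recalled in Section~\ref{sec:mtl}), so the first equation gives $x=Q^{-1}A^T\lambda$; substituting into $Ax=b$ yields $AQ^{-1}A^T\lambda=b$. The key point, and the step I expect to require the most care, is that $AQ^{-1}A^T\in\mathbb{R}^{s\times s}$ is invertible. This follows by showing it is positive definite: $Q^{-1}$ is symmetric positive definite (being the inverse of a symmetric positive definite matrix), and for any $y\in\mathbb{R}^{s\times 1}$ with $y\neq 0$ one has $A^Ty\neq 0$ because $\mathrm{posto}(A^T)=s$ means $A^T$ has trivial kernel; hence $y^T(AQ^{-1}A^T)y=(A^Ty)^TQ^{-1}(A^Ty)>0$. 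Therefore $\lambda^*:=(AQ^{-1}A^T)^{-1}b$ is uniquely determined, the only stationary point of the Lagrangian is $x^*=Q^{-1}A^T(AQ^{-1}A^T)^{-1}b$, and one checks directly that $Ax^*=b$.

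Finally, since the Lagrange conditions are only necessary, I would verify that $x^*$ is in fact the unique global minimizer by a completion-of-squares argument. Let $x$ be any feasible point and write $x=x^*+d$; feasibility of both $x$ and $x^*$ forces $Ad=0$. Expanding and using $Qx^*=A^T\lambda^*$,
\begin{equation*}
\tfrac12 x^TQx=\tfrac12 (x^*)^TQx^*+d^TQx^*+\tfrac12 d^TQd=\tfrac12 (x^*)^TQx^*+(Ad)^T\lambda^*+\tfrac12 d^TQd,
\end{equation*}
and since $Ad=0$ the middle term vanishes, giving $\tfrac12 x^TQx=\tfrac12 (x^*)^TQx^*+\tfrac12 d^TQd\geq \tfrac12 (x^*)^TQx^*$, with equality if and only if $d=0$ because $Q$ is positive definite. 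Hence $x^*$ is the unique minimizer, as claimed. This argument has the extra benefit of establishing existence and uniqueness of the minimizer without a separate coercivity or compactness discussion.
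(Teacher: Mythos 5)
Your proposal is correct and follows the same backbone as the paper's proof: both set up the Lagrangian, obtain the system $Qx=A^T\lambda$, $Ax=b$, solve for $x=Q^{-1}A^T\lambda$, substitute to get $AQ^{-1}A^T\lambda=b$, and invert. Where you differ is that you close two gaps the paper leaves open. First, the paper simply asserts that $\mathrm{posto}(A)=s$ makes $AQ^{-1}A^T$ invertible; your argument that $AQ^{-1}A^T$ is in fact positive definite (because $Q^{-1}$ is symmetric positive definite and $A^T$ has trivial kernel) is the right justification and is exactly the step that deserves care. Second, and more substantively, the paper stops at the necessary conditions: the multiplier method only guarantees that a local minimizer, \emph{if one exists}, is a critical point of $L$, so the paper's proof does not by itself establish that $x^*$ minimizes anything. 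Your completion-of-squares identity $\tfrac12 x^TQx=\tfrac12 (x^*)^TQx^*+\tfrac12 d^TQd$ for feasible $x=x^*+d$ with $Ad=0$ settles existence, global optimality, and uniqueness in one stroke, with no appeal to coercivity or compactness. This is a genuine strengthening rather than a different route; if anything, the verification step could stand alone as a complete proof, with the Lagrange machinery serving only as the heuristic that produces the candidate $x^*$.
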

\begin{proof}
    Vamos aplicar o método dos  multiplicadores de Lagrange para calcular a solução do problema~\eqref{eq;pq}. Vamos primeiramente  fixar as algumas notações. Seja $A=(a_{ij})\in \mathbb{R}^{s\times n}$, então denote a $i-$esima linha de $A$ por $A_i=(a_{i1} \ldots  a_{in})\in  \mathbb{R}^{1\times n}$,  para $i=1, \ldots, s$,  e  $b=(b_1 \ldots b_s)^T\in \mathbb{R}^{s\times 1}$. Então  reescrevemos  o problema~\eqref{eq;pq} da seguinte forma:

    \begin{equation} \label{eq:pqr}
        \begin{array}{cr}
            \text{Minimizar}_x & \frac{1}{2}x^TQx \\
            \text{tal que} ~   & A_1x=b_1         \\
                               & \vdots           \\
                               & A_sx=b_s
        \end{array}
    \end{equation}
    Deste modo, o    Lagrangeano $L: \mathbb{R}^{n\times 1}\times \mathbb{R}^{s\times 1}\to \mathbb{R}$ associado ao problema quadrático ~\eqref{eq:pqr}  é dado   por
    \begin{equation} \label{eq:lagra}
        L(x, \lambda)= \frac{1}{2}x^TQx-\sum_{i=1}^s \lambda_i(A_ix-b_i),
    \end{equation}
    Assim, usando \eqref{eq:lagra} as condições de otimalidade do problema~\eqref{eq:pqr} são dadas por  $\nabla L(x, \lambda)=0$. Como $Q \in \mathbb{R}^{n\times n}$ é uma matriz simétrica, usando o  Exemplo~\ref{ex:fq} e o Teorema~\ref{th:rd},    a da última igualdade é escrita equivalentemente da seguinte forma:
    \begin{align*}
        Qx-\sum_{i=1}^s \lambda_iA_i & =0     \\
        A_1x-b_1                     & =0     \\
                                     & \vdots \\
        A_sx-b_s                     & =0
    \end{align*}
    Estas igualdades  podem ser reescrita    na forma $Qx-A^T\lambda=0$ e $Ax-b=0$,  ou ainda
    \begin{align}
        Qx & =A^T\lambda \label{eq:fiml} \\
        Ax & =b. \label{eq:siml}
    \end{align}
    Resta agora encontrar a solução das equações \eqref{eq:fiml} e \eqref{eq:siml}.  Para isso, primeiro note que sendo $Q \in \mathbb{R}^{m\times m}$  positiva definida  ela é inversível. Assim, de \eqref{eq:fiml} obtemos $x=Q^{-1}A^T\lambda$, que substituindo em \eqref{eq:siml} resulta  em $AQ^{-1}A^T\lambda=b$.  Como $posto(A)=s<n$,  temos que a matriz  $AQ^{-1}A^T$ é inversível. Desde modo,  $\lambda=(AQ^{-1}A^T)^{-1}b$, que  substituindo em $x=Q^{-1}A^T\lambda$  nós dá a solução,  que é  a igualdade deseja.
\end{proof}
%%%%%%%%%%%%%%%%%%%%%%%%%%%%%%%%%%%
\subsection{Média aritmética, desvio padrão e covariância}
Nesta seção vamos fixar algumas anotações e conceitos básico de estatística, a saber,  média  aritmética , desvio padrão e covariância.  Consideremos um conjunto de dados dispostos na forma vetorial $v=(v_1,\ldots, v_m)^T\in  \mathbb{R}^{m\times 1}$. A {\it média aritmética} dos dados $v_1, \ldots, v_m$ e dada por:
\begin{equation} \label{eq:defma}
    \mu_v=\ddfrac{1}{m}\sum_{i=1}^mv_i=\frac{v_1+\ldots+v_m}{m}.
\end{equation}
É fácil mostrar que a média aritmética é linear como função do vetor dados, isto é,
\begin{equation} \label{eq:mal}
    \mu_{v+u}=\mu_v+\mu_u, \qquad \quad \mu_{\alpha v}=\alpha\mu_v, \qquad  \forall v, u\in  \mathbb{R}^{m\times 1},~ \forall  \alpha \in \mathbb{R}.
\end{equation}
Em certo sentido,  média de um conjunto de dados é o valor que melhor representa esse conjunto.  Outra forma de caracterizar os conjuntos de dados é através de medidas de dispersão, tais como a variância e o desvio padrão. Seja o conjunto de dados $v=(v_1,\ldots, v_m)^T\in  \mathbb{R}^{m\times 1}$ e o vetor $e=(1,\ldots, 1)\in   \mathbb{R}^{m\times 1}$. Definimos  o {\it desvio padrão} $\sigma_v$  e  a  {\it variância} $\sigma_v^2$ dos dados dispostos na forma vetorial  $v=(v_1,\ldots, v_m)^T\in  \mathbb{R}^{m\times 1}$ como sendo
\begin{equation} \label{eq:defvari}
    \sigma_v:=\sqrt{\ddfrac{1}{m}\sum_{i=1}^m(v_i-\mu_v)^2}, \qquad \qquad \qquad \sigma_v^2:=\ddfrac{1}{m}\sum_{i=1}^m(v_i-\mu_v)^2.
\end{equation}
O desvio padrão  e  a  variância são medidas de dispersão  que indicam como os dados se agrupam ao redor da média. Se os dados se acumulam ao redor da média, então a variância e o desvio padrão são pequenos. Se os dados se encontram muito afastados da média (dispersos), então essas medidas são maiores.  Agora que temos medidas caracterizando conjuntos de dados, vamos definir agora medidas que nos permitem comparar diferentes conjuntos de dados: a covariância e a correlação. Sejam $v=(v_1,  \ldots, v_m)^T\in  \mathbb{R}^{m\times 1}$ e $u=(u_1, \ldots, u_m)^T\in  \mathbb{R}^{m\times 1}$ conjuntos de dados. Seja também o conjunto de dados $vu:=(v_1y_1, \ldots, v_mu_m)^T\in  \mathbb{R}^{m\times 1}$. Definimos a {\it covariância} $Cov(v,u)$ e a  {\it correlação} $ \rho(v,u)$ entre os dados $v$ e $u$ como sendo
\begin{equation} \label{eq:defcovcor}
    Cov(v,u)=\mu_{(v-e\mu_v)(u-e\mu_u)}:=\ddfrac{1}{m} \sum_{i=1}^m(v_i-\mu_v)(u_i-\mu_u), \qquad \qquad \rho(v,u):=\frac{Cov(v,u)}{\sigma_v\sigma_u}.
\end{equation}
Cálculos diretos mostram que as quantidades definidas em \eqref{eq:defvari} e  primeira igualdade em \eqref{eq:defcovcor} satisfazem as seguintes propriedades:
\begin{enumerate}
    \item[(i)] $Cov(v,u)=\mu_{vu}-\mu_{v}\mu_{u}$, para todo $v, u\in  \mathbb{R}^{m\times 1}$;
    \item[(ii)] $\sigma^2_{v+u}=\sigma^2_v+\sigma^2_u+2Cov(v,u)$, para todo $v, u\in  \mathbb{R}^{m\times 1}$;
    \item[(iii)] $\sigma^2_{\alpha v}=\alpha^2\sigma^2_v$,  para todo $v \in  \mathbb{R}^{m\times 1}$ e $\alpha \in \mathbb{R}$.
\end{enumerate}
Terminamos esta seção com  um teorema que nos fornece  uma propriedade muito importante da  correlação, na sua prova usamos  a seguinte notação: Para cada
$$
    v=(v_1,  \ldots, v_m)^T\in  \mathbb{R}^{m\times 1}, \qquad u=(u_1, \ldots, u_m)^T\in  \mathbb{R}^{m\times 1}
$$
conjuntos de dados dispostos  na forma vetorial,  definamos  $vu$ como sendo o seguinte conjunto de dados disposto na forma vetorial
\begin{equation} \label{eq:prodvet}
    vu:=(v_1u_1,\ldots, v_my_m)^T\in  \mathbb{R}^{m\times 1}.
\end{equation}
\begin{theorem} \label{th:corr}
    Para todo   $v, u\in  \mathbb{R}^{m\times 1}$, temos  $ -1 \leq \rho(v,u) \leq 1$.
\end{theorem}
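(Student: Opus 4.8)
The plan is to recognize the correlation $\rho(v,u)$ as a normalized Euclidean inner product of the two \emph{centered} data vectors and then apply the Cauchy--Schwarz inequality $\langle a,b\rangle\le\|a\|\,\|b\|$ recorded in Section~\ref{sec:mtl}. Throughout the argument I assume $\sigma_v\neq 0$ and $\sigma_u\neq 0$, since otherwise the quotient defining $\rho(v,u)$ in \eqref{eq:defcovcor} makes no sense and there is nothing to prove.

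First I would introduce the centered vectors $a:=v-e\mu_v$ and $b:=u-e\mu_u$ in $\mathbb{R}^{m\times 1}$, so that $a_i=v_i-\mu_v$ and $b_i=u_i-\mu_u$ for each $i$. Reading off the first equality of \eqref{eq:defcovcor} and the definition \eqref{eq:defvari}, one obtains immediately
\begin{equation*}
	Cov(v,u)=\frac{1}{m}\sum_{i=1}^{m}a_ib_i=\frac{1}{m}\langle a,b\rangle,\qquad
	\sigma_v=\frac{1}{\sqrt{m}}\|a\|,\qquad
	\sigma_u=\frac{1}{\sqrt{m}}\|b\|,
\end{equation*}
and hence $\rho(v,u)=\langle a,b\rangle/(\|a\|\,\|b\|)$.

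Next I would apply the Cauchy--Schwarz inequality twice. Applied to the pair $(a,b)$ it gives $\langle a,b\rangle\le\|a\|\,\|b\|$, whence $\rho(v,u)\le 1$; applied to the pair $(-a,b)$ it gives $-\langle a,b\rangle=\langle -a,b\rangle\le\|-a\|\,\|b\|=\|a\|\,\|b\|$, whence $\rho(v,u)\ge -1$. Together these bounds give $-1\le\rho(v,u)\le 1$, as claimed. There is no genuinely hard step: the only thing requiring a little care is the bookkeeping that identifies $Cov(v,u)$ and the two standard deviations with the inner product and the norms of $a$ and $b$, plus the standing nondegeneracy assumption $\sigma_v\sigma_u\neq 0$. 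If one prefers to stay strictly within the statistical formalism, an alternative is to use properties (ii) and (iii) together with the obvious bilinearity of the covariance: the function $t\mapsto\sigma^2_{ta+b}=t^2\sigma_v^2+2t\,Cov(v,u)+\sigma_u^2$ is a nonnegative quadratic in $t\in\mathbb{R}$, so its discriminant $4\,Cov(v,u)^2-4\,\sigma_v^2\sigma_u^2$ is $\le 0$, i.e.\ $Cov(v,u)^2\le\sigma_v^2\sigma_u^2$, and dividing by $\sigma_v^2\sigma_u^2$ finishes the proof.
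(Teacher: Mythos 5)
Your proposal is correct, and your primary route differs from the paper's. The paper proves the theorem by forming the function $f(t)=\mathbb{E}[((v-e\mu_v)+t(u-e\mu_u))^2]=\sigma^2_v+2t\,Cov(v,u)+t^2\sigma^2_u$, observing that it is a nonnegative quadratic in $t$, and concluding from the sign of its discriminant that $Cov(v,u)^2\le\sigma_v^2\sigma_u^2$ --- exactly the ``alternative'' you sketch in your last sentence. Your main argument instead identifies $Cov(v,u)$, $\sigma_v$ and $\sigma_u$ with the inner product and norms of the centered vectors $a=v-e\mu_v$, $b=u-e\mu_u$ (the factors of $1/m$ cancel in the quotient) and invokes the inequality $\langle a,b\rangle\le\|a\|\,\|b\|$ already recorded in Section~\ref{sec:mtl}, applied to $(a,b)$ and $(-a,b)$ to get both bounds. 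What each buys: your route makes the proof a two-line corollary of a fact the paper has already stated, and makes transparent that $\rho$ is the cosine of the angle between the centered data vectors; the paper's route is self-contained within the statistical formalism (it only uses the linearity of the mean and the definitions of variance and covariance) and does not rely on Cauchy--Schwarz, which the paper states in Section~\ref{sec:mtl} but never proves. You are also right to flag the standing assumption $\sigma_v\sigma_u\neq 0$, which the paper's proof silently divides by as well. One small slip worth noting: Cauchy--Schwarz is usually itself proved by the very discriminant argument the paper uses, so the two proofs are at bottom the same computation packaged differently.
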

\begin{proof}
    Inicialmente, definamos o operador média  $\mathbb{E}:\mathbb{R}^m \to \mathbb{R}$ por  $\mathbb{E}[x]=\mu_x$. Dados dois  conjuntos de dados $v, u \in  \mathbb{R}^{m\times 1}$,  usando a notação \eqref{eq:prodvet} definamos também a função real $f: \mathbb{R} \to \mathbb{R}$ com sendo
    $$
        f(t)=\mathbb{E}[((x-e\mu_x)+t(y-e\mu_y))^2].
    $$
    Usando  \eqref{eq:prodvet}, após alguns cálculos temos $(x-e\mu_x)^2+2t(x-e\mu_x)(y-e\mu_y)+t^2(y-e\mu_y)^2$. Como o operador média  $\mathbb{E}$ é linear obtemos
    $$
        f(t)=\mathbb{E}[(x-e\mu_x)^2]+2t\mathbb{E}[(x-e\mu_x)(y-e\mu_y)]+t^2\mathbb{E}[(y-e\mu_y)^2].
    $$
    Então, usando segunda igualdade em \eqref{eq:defvari}  e primeira em \eqref{eq:defcovcor},  concluímos que
    $$
        f(t)=\sigma^2_x+2tCov(v,u)+t^2\sigma^2_y,
    $$
    que é um polinômio de segundo grau. Como  $f(t) \geq 0$, para todo $t \in \mathbb{R}$, o discriminante da equação $f(t)=0$ deve ser $\Delta=4Cov(v,u)^2-4\sigma_x^2\sigma_y^2 \leq 0$,   o que implica
    $$
        \frac{Cov(v,u)^2}{\sigma_x^2\sigma_y^2}\leq 1.
    $$
    Portanto,  definição de $\rho(v,u)$ em \eqref{eq:defcovcor}  nós dá  $\rho(v,u)^2 \leq 1$,    que  é equivalente  ao  resultado desejado.
\end{proof}
Para mais detalhes  dos tópicos abordados nesta seção veja por exemplo  \cite{Bussab2010}.
%%%%%%%%%%%%%%%%%%%%%%%%%%%%%%%%%%%%%%%%%%%%%%%%%%%%%%%%%%
%%%%%%%%%%%%%%%%%%%%%%%%%%%%%%%%%%%

\section{Retorno e risco} \label{se:tmp}
Nesta seção estudamos os conceitos  de retorno e risco de uma carteira e  suas propriedades básicas. Estes conceitos são importantes para o bom entendimento  da teoria moderna do  portfólio ou teoria de mínimo risco que foi introduzida em \cite{Markowitz1952}, a qual será estudada nas próximas seções.
\begin{definition}
    Seja um conjunto de $n$ ativos $A_1, \ldots, A_n$ e seja $a_{ij}$ o preço de fechamento ajustado do ativo $A_j$ no $i$-ésimo período considerado entre $m$ períodos totais. Podemos representar esses dados através da seguinte tabela:
    \begin{table}[H]
        \begin{footnotesize}
            \begin{center}
                \begin{tabular}{lccccc}
                    Período   & $A_1$    & \dots    & $A_j$    & \dots    & $A_n$                               \\
                    \hline
                    $1$º      & $a_{11}$ & \dots    & $a_{1j}$ & \dots    & $a_{1n}$                            \\
                    $2$º      & $a_{21}$ & \dots    & $a_{2j}$ & \dots    & $a_{2n}$ \\{}
                    \vdots    & $\vdots$ & $\vdots$ & $\vdots$ & $\vdots$ & $\vdots$                            \\
                    $i$-ésimo & $a_{i1}$ & \dots    & $a_{ij}$ & \dots    & $a_{in}$                            \\
                    \vdots    & $\vdots$ & $\vdots$ & $\vdots$ & $\vdots$ & $\vdots$                            \\
                    $m$-ésimo & $a_{m1}$ & \dots    & $a_{mj}$ & \dots    & $a_{mn}$                            \\
                    \hline
                \end{tabular}
                \caption{ \footnotesize Preços   de fechamento ajustado.} \label{tab:price}
            \end{center}
        \end{footnotesize}
    \end{table}
\end{definition}

\begin{remark} Os períodos na tabela acima podem ser divididos de diversas formas diferentes a depender do tipo de análise que será feita. Em geral, os dados são coletados diariamente, semanalmente ou mensalmente.
\end{remark}

Um {\it portfólio} composto pelos ativos $A_1, \ldots, A_n$  é definido pela alocação de um vetor de  capital $x=(x_1, \ldots, x_n)^T\in \mathbb{R}^{n\times 1}$ com $x_1+ \cdots + x_n=1$, onde a porcentagem $x_i$ do capital  é alocada no ativo $A_i$. Denotamos de agora em  diante  o {\it portfólio composto pelos ativos $A_1, \ldots, A_n$}  por
$$
    P_x:=(x_1A_1,\ldots, x_nA_n)
$$
onde  $x=(x_1, \ldots, x_n)^T\in \mathbb{R}^{n\times 1}$  sendo vetor de alocação de capital. O {\it retorno} $r_{ij}$  ou a {\it taxa de retorno} de um ativo $A_j$ sobre um determinado período $i$  é a porcentagem de crescimento (ou decrescimento) entre o período considerado e o período anterior, isto é,
$$
    r_{ij}:=\frac{a_{ij}-a_{(i-1)j}}{a_{(i-1)j}}, \qquad 1\leq i \leq m; \quad 1\leq j \leq n.
$$
Definimos  $\mu_j$ como sendo o {\it retorno médio}  do ativo  $A_j$ e   $\sigma_j$ como sendo o seu {\it risco}, isto é, o {\it desvio padrão}   sobre os períodos considerados, ou seja,
\begin{equation} \label{eq:muij}
    \mu_j:=\frac{1}{m}\sum_{i=1}^mr_{ij}, \qquad \qquad  \sigma_j:=\sqrt{\frac{1}{m}\sum_{i=1}^m(r_{ij}-\mu_j)^2}
\end{equation}
A seguir introduzimos o conceito de retorno de um portfólio com respeito a um conjunto de ativos.
\begin{definition} \label{eq:rmdp}
    Seja $P_x:=(x_1A_1,\ldots, x_nA_n)$ o portfólio com os ativos $A_1,\ldots, A_n$ tais que $x_1, \ldots, x_n$ são as frações do capital investido em cada um dos ativos, respectivamente. Seja $r_{ix}$ o retorno do portfólio $P_x$ no $i$-ésimo período, o qual é dado por
    \begin{equation} \label{eq:rix}
        r_{ix }:=\sum_{j=1}^nx_jr_{ij}
    \end{equation}
    Definimos também o retorno médio e desvio padrão do portfólio $P_x$ como sendo:
    \begin{equation} \label{eq:mxsx}
        \mu_x:=\frac{1}{m}\sum_{i=1}^mr_{ix } \qquad \qquad  \sigma_x:=\sqrt{\frac{1}{m}\sum_{i=1}^m(r_{ix}-\mu_x)^2}
    \end{equation}
\end{definition}
Na Tabela~\ref{tab:retur} listamos os dados de retornos, retornos médios e desvios padrões relativos ao portfólio $P_x$:
\begin{table}[H]
    \begin{footnotesize}
        \begin{center}
            \begin{tabular}{lccccc|c}
                Período       & $A_1$      & \dots    & $A_j$      & \dots    & $A_n$      & $P_x$       \\
                \hline
                $1$º          & $r_{11}$   & \dots    & $r_{1j}$   & \dots    & $r_{1n}$   & ${r}_{1 x}$ \\
                $2$º          & $r_{21}$   & \dots    & $r_{2j}$   & \dots    & $r_{2n}$   & ${r}_{2 x}$ \\
                \vdots        & $\vdots$   & $\vdots$ & $\vdots$   & $\vdots$ & $\vdots$   & $\vdots$    \\
                $i$-ésimo     & $r_{i1}$   & \dots    & $r_{ij}$   & \dots    & $r_{in}$   & ${r}_{i x}$ \\
                \vdots        & $\vdots$   & $\vdots$ & $\vdots$   & $\vdots$ & $\vdots$   & $\vdots$    \\
                $m$-ésimo     & $r_{m1}$   & \dots    & $r_{mj}$   & \dots    & $r_{mn}$   & ${r}_{mx}$  \\
                \hline
                Retorno Médio & $\mu_{1}$  & \dots    & $\mu_{j}$  & \dots    & $\mu_{n}$  & ${\mu}_{x}$ \\
                Desvio Padrão & $\sigma_1$ & \dots    & $\sigma_j$ & \dots    & $\sigma_n$ & $\sigma_x$
            \end{tabular}
            \caption{\footnotesize  Tabela dos retornos.} \label{tab:retur}
        \end{center}
    \end{footnotesize}
\end{table}
\noindent
Para referencia futura $M\in \mathbb{R}^{n\times 1}$ é o {\it vetor dos retornos médios dos ativos $A_1,\ldots,A_n$}, i.e.,
\begin{equation} \label{eq:rm}
    M^T:=(\mu_1,  \ldots, \mu_n).
\end{equation}
Definamos também $R_{* j} \in \mathbb{R}^{n\times 1}$ como sendo  o {\it vetor dos retornos dos ativos $A_j$}    como sendo
\begin{equation} \label{eq:rAj}
    R^{\T}_{* j}:=\left(r_{1j}, \ldots, r_{mj} \right).
\end{equation}
Definamos ainda $R_{i* } \in \mathbb{R}^{n\times 1}$ como sendo  o {\it vetor dos retornos dos ativos $A_1,\ldots,A_n$  no $i$-ésimo  período}  como sendo
\begin{equation} \label{eq:rix2}
    R_{i* }^T:=(r_{i1},r_{i2},\ldots,r_{in}).
\end{equation}
No próximo lema usamos $M$ para apresentar  uma fórmula para o  retorno do portfólio $P_x$.
\begin{lemma} \label{le:Mx}
    Sejam  $x=(x_1, \ldots, x_n)^T$  o vetor  das frações do capital investido e $M^T:=(\mu_1, \ldots, \mu_n)$  vetor dos retornos médios associados ao portfólio $P_x=(x_1A_1,  \ldots, x_nA_n)$. Então, o retorno do portfólio $P_x$ será dado por
    $$
        \mu_x=M^Tx.
    $$
\end{lemma}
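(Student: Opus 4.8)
The plan is to unwind the definitions and invoke the linearity of the arithmetic mean recorded in \eqref{eq:mal}. First I would recall from \eqref{eq:rix} that the portfolio return in the $i$-th period is $r_{ix}=\sum_{j=1}^n x_j r_{ij}$, and from \eqref{eq:mxsx} that $\mu_x=\frac{1}{m}\sum_{i=1}^m r_{ix}$. Substituting the first expression into the second gives
\begin{equation*}
\mu_x=\frac{1}{m}\sum_{i=1}^m\sum_{j=1}^n x_j r_{ij}.
\end{equation*}

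Next I would interchange the two finite sums --- which is always permitted --- and pull the constant $x_j$ out of the sum over $i$, obtaining $\mu_x=\sum_{j=1}^n x_j\left(\frac{1}{m}\sum_{i=1}^m r_{ij}\right)$. By the definition of the mean return $\mu_j$ in \eqref{eq:muij}, the inner sum is exactly $\mu_j$, so that $\mu_x=\sum_{j=1}^n x_j\mu_j$. Recognizing the right-hand side as the inner product of $x$ with the vector $M$ defined in \eqref{eq:rm}, i.e. $\sum_{j=1}^n \mu_j x_j = M^Tx$, completes the argument.

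An equivalent and slightly slicker route is to argue at the level of data vectors: the vector of portfolio returns over the $m$ periods is the linear combination $\sum_{j=1}^n x_j R_{*j}$ of the asset-return vectors $R_{*j}$ from \eqref{eq:rAj}, so applying \eqref{eq:mal} termwise yields $\mu_x=\mu_{\sum_j x_j R_{*j}}=\sum_{j=1}^n x_j\mu_{R_{*j}}=\sum_{j=1}^n x_j\mu_j=M^Tx$. There is no genuine obstacle in this proof; the only point requiring a little care is the index bookkeeping in the double sum and correctly identifying the inner average with $\mu_j$.
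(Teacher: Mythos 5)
Sua proposta está correta e segue essencialmente o mesmo caminho da prova do artigo: substituir \eqref{eq:rix} na primeira igualdade de \eqref{eq:mxsx}, trocar a ordem dos somatórios, identificar a soma interna com $\mu_j$ via \eqref{eq:muij} e reconhecer $\sum_j x_j\mu_j=M^Tx$. A rota alternativa via linearidade da média em \eqref{eq:mal} é apenas uma reformulação do mesmo argumento.
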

\begin{proof}
    Combinando \eqref{eq:rix} com primeira igualdade em \eqref{eq:mxsx}   temos
    \begin{equation} \label{eq:maux}
        \mu_x=\frac{1}{m}\sum_{i=1}^m\sum_{j=1}^nx_jr_{ij}=\sum_{j=1}^nx_j\frac{1}{m}\sum_{i=1}^mr_{ij}
    \end{equation}
    Substituindo em  \eqref{eq:maux}  a primeira  igualdade em \eqref{eq:muij}    obtemos  $\mu_x=\sum_{j=1}^nx_j\mu_j=M^Tx$, o que conclui a prova.
\end{proof}
A seguir vamos deduzir uma fórmula para calcular o desvio padrão do portfólio $P_x$. Para tal é conveniente  introduzir a covariância entre os ativos e também a matriz da covariância.
\begin{definition}  \label{def:cov}
    Definimos a covariância entre dois ativos $A_j$ e $A_{\ell}$ como $\sigma_{j\ell}:=Cov(R_{* j}, R_{* \ell})$, onde  $R_{* j}=\left(r_{1j}, \ldots, r_{mj} \right)$ e  $R_{* \ell} =\left(r_{1\ell}, \ldots, r_{m\ell} \right)$  são os  vetores dos retornos dos ativos $A_j$ e $A_\ell$, respectivamente. Assim, usando primeira igualdade em \eqref{eq:defcovcor} temos
    \begin{equation} \label{eq:cov}
        \sigma_{j\ell}:=\frac{1}{m}\sum_{i=1}^m(r_{ij}-\mu_j)(r_{i\ell}-\mu_{\ell}); \qquad 1\leq j,\ell \leq n.
    \end{equation}
    Além disso, usando segunda igualdade em \eqref{eq:muij} e  \eqref{eq:cov} concluímos   que
    $$
        \sigma_{jj}=\sigma_j^2.
    $$
    A matriz da covariância associada aos  ativos $A_1,\ldots,A_n$ é definida por  $V:=(\sigma_{j\ell}) \in \mathbb{R}^{n\times n}$.
\end{definition}
No próximo lemma vamos   expressar a  matriz da covariância  $V:=(\sigma_{j\ell}) \in \mathbb{R}^{n\times n}$ introduzida na Definição~\ref{def:cov} usando  o vetor dos retornos médios \eqref{eq:rm} e  o vetor dos retornos \eqref{eq:rix2}.
\begin{lemma} \label{eq:mce}
    A matriz da covariância $V$ associada os  ativos $A_1,\ldots,A_n$ é dada por
    \begin{equation} \label{eq:covr}
        V=\frac{1}{m}\sum_{i=1}^m(R_{i* }-M)(R_{i* }-M)^T.
    \end{equation}
\end{lemma}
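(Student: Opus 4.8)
The plan is to prove the identity entrywise, since two matrices in $\mathbb{R}^{n\times n}$ agree if and only if all their corresponding entries agree. So I would fix a pair of indices $j,\ell$ with $1\le j,\ell\le n$ and compute the $(j,\ell)$ entry of the right-hand side of \eqref{eq:covr}.

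First I would unpack the vector $R_{i*}-M$. By \eqref{eq:rix2} and \eqref{eq:rm} we have $R_{i*}-M=(r_{i1}-\mu_1,\ldots,r_{in}-\mu_n)^T\in\mathbb{R}^{n\times 1}$. Then for each fixed $i$ the outer product $(R_{i*}-M)(R_{i*}-M)^T$ is an $n\times n$ matrix whose $(j,\ell)$ entry is precisely the product of the $j$-th entry of $R_{i*}-M$ with the $\ell$-th entry of $R_{i*}-M$, namely $(r_{ij}-\mu_j)(r_{i\ell}-\mu_\ell)$. This is just the definition of the product of an $n\times 1$ matrix by a $1\times n$ matrix recalled in Section~\ref{sec:mtl}.

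Next I would sum over $i=1,\ldots,m$ and divide by $m$. Since the $(j,\ell)$ entry of a sum of matrices is the sum of the $(j,\ell)$ entries, the $(j,\ell)$ entry of $\frac{1}{m}\sum_{i=1}^m(R_{i*}-M)(R_{i*}-M)^T$ equals $\frac{1}{m}\sum_{i=1}^m(r_{ij}-\mu_j)(r_{i\ell}-\mu_\ell)$, which by \eqref{eq:cov} in Definition~\ref{def:cov} is exactly $\sigma_{j\ell}$, the $(j,\ell)$ entry of $V$. As $j$ and $\ell$ were arbitrary, this gives $V=\frac{1}{m}\sum_{i=1}^m(R_{i*}-M)(R_{i*}-M)^T$, as claimed.

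There is no real obstacle here: the statement is essentially a bookkeeping identity, and the only point requiring a little care is correctly identifying the $(j,\ell)$ entry of the rank-one matrix $(R_{i*}-M)(R_{i*}-M)^T$ and matching it with the summand in the scalar definition \eqref{eq:cov}. If desired, one could alternatively phrase the whole argument at the matrix level by writing $R_{i*}-M$ as a column vector and invoking the linearity of the entrywise sum, but the entrywise verification above is the most transparent route.
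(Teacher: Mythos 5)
Your proposal is correct and follows essentially the same route as the paper's own proof: both arguments verify the identity entrywise by identifying the $(j,\ell)$ entry of the rank-one matrix $(R_{i*}-M)(R_{i*}-M)^T$ as $(r_{ij}-\mu_j)(r_{i\ell}-\mu_\ell)$, averaging over $i$, and matching the result with the definition of $\sigma_{j\ell}$ in \eqref{eq:cov}. There is nothing to add.
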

\begin{proof}
    Primeiramente note que $(R_{i* }-M)(R_{i* }-M)^T\in \mathbb{R}^{n\times n}$. Para simplificar a notação, denote por $v^i_{j\ell}$ a  $j\ell$-entrada da matriz $(R_{i* }-M)(R_{i* }-M)^T$ e   considere a matrix
    \begin{equation} \label{eq:covraux}
        \frac{1}{m}\sum_{i=1}^m(R_{i* }-M)(R_{i* }-M)^T\in \mathbb{R}^{n\times n}
    \end{equation}
    Levando en conta que   $v^i_{j\ell}$ a  $j\ell$-entrada da matriz $(R_{i* }-M)(R_{i* }-M)^T$, temos   a  $j\ell$-entrada da matriz \eqref{eq:covraux} é dada por
    \begin{equation} \label{eq:cmof}
        V_{j\ell}=\frac{1}{m}\sum_{i=1}^m v^i_{j\ell}
    \end{equation}
    Por outro lado,  combinando  \eqref{eq:rm} com \eqref{eq:rix2}  temos  $v^i_{j\ell}=(r_{ij}-\mu_j)(r_{i\ell}-\mu_{\ell}).$  Esta  igualdade juntamente com  \eqref{eq:cmof} implica
    $$
        V_{j\ell}= \frac{1}{m}\sum_{i=1}^m(r_{ij}-\mu_j)(r_{i\ell}-\mu_{\ell}),
    $$
    e assim, usando \eqref{eq:cov}, obtemos que $V_{j\ell}=\sigma_{j\ell}$ o que prova a igualdade \eqref{eq:covr}.
\end{proof}
A seguir  vamos expressar o desvio padrão ou risco $\sigma_x$  de um portfólio   $P_x=(x_1A_1,\ldots, x_nA_n)$ em função da matriz da covariância  $V$ associada ao ativos $A_1,\ldots,A_n$  que compõem este portfólio.
\begin{lemma} \label{le:fxmc}
    Seja  $V \in \mathbb{R}^{n\times n}$ a matriz da covariância associada aos  ativos $A_1,\ldots,A_n$ e seja também  $x=(x_1, \ldots, x_n)^T\in \mathbb{R}^{n\times 1}$  um vetor de alocação de capital. Então, o desvio padrão ou risco  $\sigma_x$ do portfólio  $P_x=(x_1A_1,\ldots, x_nA_n)$  é dado por
    \begin{equation} \label{eq:dvp}
        \sigma_x=\sqrt{x^TVx}
    \end{equation}
\end{lemma}
\begin{proof}
    Combinando \eqref{eq:rix} com \eqref{eq:rix2} temos $r_{ix }=R_{i* }^Tx$. Então, usando a definição  do desvio padrão  $\sigma_x$ em \eqref{eq:mxsx} juntamente com o   Lema~\ref{le:Mx} concluímos que
    \begin{equation} \label{eq:ss}
        \sigma_x^2=\frac{1}{m}\sum_{i=1}^m(r_{ix}-\mu_x)^2=\frac{1}{m}\sum_{i=1}^m(R_{i* }^Tx-M^Tx)^2.
    \end{equation}
    Por outro lado, como o produto de matrizes é distributivo e  a transposição  é linear, temos
    \begin{equation} \label{eq:ss1}
        (R_{i* }^Tx-M^Tx)^2=\big((R_{i* }-M)^Tx\big)^2=\big((R_{i* }-M)^Tx\big)\big((R_{i* }-M)^Tx\big).
    \end{equation}
    Agora,  o produto $((R_{i* }-M)^Tx)$ é de uma matriz  de dimensão $1\times 1$. Como toda matriz $1\times 1$ é simétrica  e  a transposição do produto de matrizes inverte sua ordem, temos
    \begin{equation} \label{eq:ss2}
        \big((R_{i* }-M)^Tx\big)\big((R_{i* }-M)^Tx\big)=\big((R_{i* }-M)^Tx\big)^T\big((R_{i* }-M)^Tx\big)=x^T(R_{i* }-M)(R_{i* }-M)^Tx.
    \end{equation}
    Portanto, combinado a igualdade \eqref{eq:ss} com as igualdades \eqref{eq:ss1} e \eqref{eq:ss2} concluímos que
    $$
        \sigma_x^2=\frac{1}{m}\sum_{i=1}^mx^T(R_{i* }-M)(R_{i* }-M)^Tx.
    $$
    Na igualdade anterior,  o vetor $x$ e  o numero $1/m$ são invariantes em relação a $i$. Então  podemos tirar $x$ de dentro do somatório, ao passo que $1/m$ entra, o que nos fornece a seguinte igualadade
    $$
        \sigma_x^2=x^T\Big(\sum_{i=1}^m\frac{1}{m}(R_{i* }-M)(R_{i* }-M)^T\Big)x.
    $$
    Usando  o Lema~\ref{eq:mce} obtemos que o termo entre os parênteses é exatamente a matriz $V$. Portanto, a igualdade anterior se reduz a $\sigma_x^2=x^TVx $, que é equivalente a igualdade desejada.
\end{proof}
Usando o Lema~\ref{le:fxmc}  juntamente com a Definição~\ref{def:cov}, obtemos o próximo resultado.
\begin{corollary}  \label{cr:exfdp}
    Seja  $V \in \mathbb{R}^{n\times n}$ a matriz da covariância associada aos  ativos $A_1,\ldots,A_n$ e seja também  $x=(x_1, \ldots, x_n)^T\in \mathbb{R}^{n\times 1}$  um vetor de alocação de capital. Então, o desvio padrão ou risco  $\sigma_x$ do portfólio  $P_x=(x_1A_1,\ldots, x_nA_n)$  é dado por

    $$
        \sigma_x=\sqrt{\sum_{j=1}^nx_j^2\sigma_j^2+\sum_{j=1, j\neq \ell}^n \sum_{\ell=1, \ell\neq j}^nx_jx_{\ell}\sigma_{j\ell}}=\sqrt{ \sum_{j=1}^n  x_j^2 \sigma_j^2 + \sum_{j=1, j\neq \ell}^n \sum_{\ell=1, \ell\neq j}^n x_j x_\ell \sigma_j \sigma{_\ell} \rho_ {j\ell} }
    $$
    onde $\rho_ {j\ell}$ é a correlação entre os ativos $A_i$ e $A_\ell$.
\end{corollary}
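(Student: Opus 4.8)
The plan is to start from the identity $\sigma_x=\sqrt{x^{T}Vx}$ already proved in Lemma~\ref{le:fxmc} and to expand the quadratic form $x^{T}Vx$ entrywise. Writing $V=(\sigma_{j\ell})\in\mathbb{R}^{n\times n}$ and using the definition of the matrix product together with the convention $\mathbb{R}^{n}\equiv\mathbb{R}^{n\times 1}$, one gets
$$
x^{T}Vx=\sum_{j=1}^{n}\sum_{\ell=1}^{n}x_{j}x_{\ell}\sigma_{j\ell}.
$$
So the whole content of the corollary is a rearrangement of this double sum, followed by a substitution.

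First I would split the double sum according to whether $j=\ell$ or $j\neq\ell$. The diagonal part is $\sum_{j=1}^{n}x_{j}^{2}\sigma_{jj}$, and by the identity $\sigma_{jj}=\sigma_{j}^{2}$ recorded in Definition~\ref{def:cov} this equals $\sum_{j=1}^{n}x_{j}^{2}\sigma_{j}^{2}$. The remaining terms are precisely $\sum_{j\neq\ell}x_{j}x_{\ell}\sigma_{j\ell}$, which, once one takes the square root, gives the first displayed expression for $\sigma_{x}$.

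For the second equality I would invoke the definition of the correlation between two assets, $\rho_{j\ell}=\rho(R_{*j},R_{*\ell})=\dfrac{Cov(R_{*j},R_{*\ell})}{\sigma_{j}\sigma_{\ell}}=\dfrac{\sigma_{j\ell}}{\sigma_{j}\sigma_{\ell}}$, coming from \eqref{eq:defcovcor} and Definition~\ref{def:cov}; equivalently $\sigma_{j\ell}=\sigma_{j}\sigma_{\ell}\rho_{j\ell}$. Substituting this into the off-diagonal sum turns it into $\sum_{j\neq\ell}x_{j}x_{\ell}\sigma_{j}\sigma_{\ell}\rho_{j\ell}$, which is the second form in the statement.

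The argument is pure bookkeeping, so I do not expect a genuine obstacle; the only points that require a little care are separating the diagonal contribution from the off-diagonal one and noting the symmetry $\sigma_{j\ell}=\sigma_{\ell j}$ (which holds because $Cov$ is symmetric in its two arguments), so that the cross terms are correctly expressed in the doubly-indexed form written in the statement.
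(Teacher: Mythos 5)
Your proposal is correct and follows essentially the same route as the paper: the first equality is obtained by expanding the quadratic form $x^{T}Vx$ from Lemma~\ref{le:fxmc} and separating the diagonal terms via $\sigma_{jj}=\sigma_{j}^{2}$, and the second by substituting $\sigma_{j\ell}=\sigma_{j}\sigma_{\ell}\rho_{j\ell}$ from \eqref{eq:defcovcor}. You merely spell out the bookkeeping that the paper leaves implicit.
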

\begin{proof}
    A primeira igualdade   segue explicitando o produto de matrizes no Lema~\ref{le:fxmc}. Para provar a segunda igualdade primeiro note que $\sigma_{j\ell}=Cov(R_{* j}, R_{* \ell})$. Como $\rho_ {j\ell}$ é a correlação entre os ativos $A_i$ e $A_\ell$, seque da segunda igualdade em \eqref{eq:defcovcor} que $ \rho_ {j\ell}= \sigma_{j\ell}/(\sigma_j \sigma{_\ell})$. Então, a segunda igualdade do corolário segue da primeira.
\end{proof}

\begin{remark} \label{obs:cn}
    Uma importante consequência  do  Corolário~\ref{cr:exfdp}  é que,  dados ativos $A_1,A_2, \ldots, A_n$, o risco  do portfólio  $P_x=(x_1A_1, \ldots, x_nA_n)$ depende essencialmente de três coisas:  Da alocação de capital $x$, dos desvios padrões $\sigma_i$ e covariâncias entre os ativos $\sigma_{ij}$,  ou equivalentemente,  da correlação $\rho_ {j\ell}$ entre os ativos.  Assim, algumas formas naturais de reduzir o risco  de um portfólio são as seguinte:
    \begin{enumerate}
        \item[(i)]  Aumentar a fração de capital alocado em ativos com menor risco;
        \item[(ii)] Selecionar ativos que tenham pequeno desvio padrão;
        \item[(iii)] Selecionar ativos que possuam baixa correlação    uns com os outros.
    \end{enumerate}
    Pelo Teorema~\ref{th:corr} a correlação entre dois ativos vária no intervalo  $[-1, 1]$, sendo que o sinal positivo indica que os dois ativos caminham na mesma a direção, isto é, os retornos de ambos os ativos são positivos ou ambos são negativos. Já o sinal negativo indica que os ativos possuem direções opostas, isto é, quanto um ativo está com retorno positivo o outro está com retorno negativo e vice-versa. O valor da correlação indica o quão forte é a relação entre os ativos, sendo 1 a correlação perfeita e 0 a ausência de correlação. Ativos com correlação positiva, adicionam mais retorno na alta, mas também mais risco na baixa, deste modo ativos com correlação negativa diminuem consideravelmente o risco, perdendo um pouco de retorno na alta. O conjunto de ativos selecionado com o objetivo de ter o menor risco deve conter ativos com correlações negativas ou baixas, entretanto encontrar um conjunto de ativos com esta característica não é um tarefa fácil.
\end{remark}
%%%%%%%%%%%%%%%%%%%%%%%%%%%%%%%%%%%%%%%%%%%%%%%%%%%%%%
\section{Portfólio de risco mínimo}  \label{sec:PRM}
O objetivo desta  seção é  apresentar uma formula explicita para a fronteira eficiente de conjunto de ativos,  a qual apareceu primeiramente em  \cite{Merton1972}. Como consequência  vamos calcular o portfólio de risco mínimo associado a este  conjunto de ativos. Vale a pena ressaltar que sempre devemos ter  em mente a Observação~\ref{re:ddpmr2} quando utilizamos o desvio padrão como uma medida de risco.

    {\it A fronteira eficiente} de um conjunto de ativos de risco é um ramo de hipérbole  no conjunto das possíveis alocações de ativos, a qual  {\it é caraterizada  como sendo a curva   contendo todos os portfólios satisfazendo a condição que não existe nenhum outro portfólio  com o mesmo risco e retorno maior}. Formalmente, a fronteira eficiente é o traço da curva em  espaço $risco\times retorno$ dada por:
\begin{equation} \label{eq:fef}
    0<r \mapsto \sigma(r):=\sqrt{x(r)^TVx(r)}
\end{equation}
onde  o vetor  $x(r)$ é a alocação associada ao portfólio ótimo $P_{x(r)}$  com retorno fixado $r$, isto é,  $x(r)$  é solução do seguinte {\it problema de otimização parametrizado}:
\begin{subequations}
    \begin{alignat}{2}
        \min_{x}     \qquad   & \frac{1}{2}x^{\T}Vx   \label{eq:optProbref} \\
        \text{tal que } \quad & {M}^{\T}x= {r}, \label{eq:constraintr1ef}   \\
                              & e^{\T}x=1 ,\label{eq:constraintr2ef}        \\
                              & x \geq 0. \label{eq:constraintr3ef}
    \end{alignat}
\end{subequations}
Para apresentar uma fórmula explícita  para $x(r)$, vamos simplificar o problema Problema~\eqref{eq:optProbref}-\eqref{eq:constraintr3ef}, ou seja, vamos  abandonar a última restrição, isto é, a restriaçãp \eqref{eq:constraintr3ef}. Assim, a nova formulação fica da seguinte forma:
\begin{subequations}
    \begin{alignat}{2}
        \min_{x}     \quad  & \frac{1}{2}x^{\T}Vx   \label{eq:optProbnf} \\
        \text{s. t.}  \quad & {M}^{\T}x = {r}, \label{eq:constraintr1nf} \\
                            & e^{\T}x =1 ,\label{eq:constraintr2nf}
    \end{alignat}
\end{subequations}
\begin{remark}
    Esta nova formulação, ou seja  Problema~\eqref{eq:optProbnf}-\eqref{eq:constraintr2nf},   significa que podemos ter frações de alocação negativas e maiores que $1$. Na prática, isso significa que permitimos operações com alavancagem de mercado, isto é, pode-se vender um ativo antes de comprá-lo e pode-se alocar mais capital do que se efetivamente tem em um determinado ativo. Por exemplo, isto pode ser feito fazendo empréstimo ao mercado.
\end{remark}

Para prosseguir  precisamos adicionar   três  hipóteses. Seja $P_x:=(x_1A_1, x_2A_2, \ldots, x_nA_n)$ o portfólio com os ativos $A_1, A_2, \ldots, A_n$ e $V$  matriz da covariância associada a estes  ativos.  Então assumimos as seguintes hipóteses:
\begin{enumerate}
    \item[(h1)] O  portfólio  $P_x$  é composto apenas com ativos arriscados;
    \item [(h2)]A matriz $V$ é definida positiva e, portanto, inversível;
          \item[(h3)]Não existe $\gamma \in \mathbb{R}$ tal que $M=\gamma e$.
\end{enumerate}
\begin{remark}
    As  hipóteses acimas são razoáveis. De fato, o Lema~\ref{le:fxmc} nos dá como consequência que a matriz $V$ é tal que $x^TVx\geq0$, para todo vetor $x \in \mathbb{R}^n$, isto é, é uma matriz semidefinida positiva. Para excluir a possibilidade de ser $x^TVx=0$, para algum vetor não nulo $x \in \mathbb{R}^n$. Basta observar, $x^TVx=0$ implica $\sigma_x=0$, o que significa que teríamos uma   alocação de capital  com risco nulo, o que contraria a primeira hipótese.
\end{remark}
Antes de enunciar o principal resultado desta seção precisamos de algumas constantes auxiliares, as quais estão associadas os dados do  Problema~\eqref{eq:optProbref}-\eqref{eq:constraintr2ef}.
\begin{equation} \label{eq:caux}
    a:= M^TV^{-1}e, \qquad b:=M^TV^{-1}M, \qquad c:=e^TV^{-1}e, \qquad \Delta:=bc-a^2
\end{equation}
Além disso, para simplificar as notações  definimos os seguintes vetores:
\begin{equation} \label{eq:vaux}
    [M\,\, e]:=\begin{bmatrix}
        \mu_1  & 1      \\
        \mu_2  & 1      \\
        \vdots & \vdots \\
        \mu_n  & 1
    \end{bmatrix} \in \mathbb{R}^{n\times 2},  \quad  \qquad  [r\,\, 1]:=(r \,\,1) \in \mathbb{R}^{1\times2}.
\end{equation}
\begin{theorem} \label{th:ef}
    Seja $P_{x(r)}$ um  portfólio com alocação de capital $x(r)$ associado a um retorno esperado $r>0$. Então  $x(r)$ é dada por:
    \begin{align}
        0<r \mapsto x(r) & := V^{-1}[{M}~e]\left([{M}~e]^{\T}V^{-1}[{M}~e]\right)^{-1}  [r~1]^{\T} \label{eq:pfdd1} \\
                         & =\frac{cr -a}{\Delta}V^{-1}{M}-\frac{ar -b}{\Delta}V^{-1}e. \label{eq:pfdd2}
    \end{align}
    Consequentemente,   a fronteira eficiente no espaço $risco\times retorno$  é dada como se seque
    \begin{align}
        0<r\mapsto \sigma(r) & :=\sqrt{[r ~1]\left([{M}~e]^{\T}V^{-1}[{M}~e]\right)^{-1} [r~1]^{\T}} \label{eq:efdd1} \\
                             & =\sqrt{\frac{1}{\Delta}\left(cr^2-2ar+b \right)}. \label{eq:efdd2}
    \end{align}
\end{theorem}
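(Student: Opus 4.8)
The plan is to recognize Problem~\eqref{eq:optProbnf}--\eqref{eq:constraintr2nf} as an instance of the canonical quadratic program solved in Theorem~\ref{th:sqp}, and then to unwind the resulting closed form using the auxiliary constants in \eqref{eq:caux}. First I would put $Q:=V$, which is symmetric and positive definite by the standing hypotheses, and stack the two equality constraints $M^{\T}x=r$ and $e^{\T}x=1$ into the single matrix equation $Ax=\beta$ with $A:=[{M}~e]^{\T}\in\R^{2\times n}$ and $\beta:=[r~1]^{\T}\in\R^{2\times 1}$. To apply Theorem~\ref{th:sqp} one must check that $posto(A)=2<n$: the inequality $2<n$ is part of the setting, and the two rows $M^{\T}$ and $e^{\T}$ of $A$ are linearly independent precisely by hypothesis~3 ($M$ is not a scalar multiple of $e$). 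Theorem~\ref{th:sqp} then immediately gives
\[
x(r)=Q^{-1}A^{\T}(AQ^{-1}A^{\T})^{-1}\beta=V^{-1}[{M}~e]\left([{M}~e]^{\T}V^{-1}[{M}~e]\right)^{-1}[r~1]^{\T},
\]
which is exactly \eqref{eq:pfdd1}.

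Next I would compute the $2\times 2$ matrix $G:=[{M}~e]^{\T}V^{-1}[{M}~e]$ entrywise. Using symmetry of $V^{-1}$ and the definitions in \eqref{eq:caux},
\[
G=\begin{bmatrix} {M}^{\T}V^{-1}{M} & {M}^{\T}V^{-1}e \\ e^{\T}V^{-1}{M} & e^{\T}V^{-1}e \end{bmatrix}=\begin{bmatrix} b & a \\ a & c\end{bmatrix},
\]
so $\det G=bc-a^2=\Delta$. Here I would record that $\Delta>0$: since $V^{-1}$ is positive definite and the columns $M,e$ are linearly independent, $G$ is positive definite, hence has positive determinant (so the square roots in \eqref{eq:efdd2} make sense). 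Therefore $G^{-1}=\tfrac{1}{\Delta}\begin{bmatrix} c & -a \\ -a & b\end{bmatrix}$, and multiplying out,
\[
G^{-1}[r~1]^{\T}=\frac{1}{\Delta}\begin{bmatrix} cr-a \\ b-ar\end{bmatrix},\qquad x(r)=V^{-1}[{M}~e]\,G^{-1}[r~1]^{\T}=\frac{cr-a}{\Delta}V^{-1}{M}-\frac{ar-b}{\Delta}V^{-1}e,
\]
which is \eqref{eq:pfdd2}.

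For the frontier curve I would substitute \eqref{eq:pfdd1} into $\sigma(r)=\sqrt{x(r)^{\T}Vx(r)}$, the risk formula from Lema~\ref{le:fxmc}. Writing $y:=G^{-1}[r~1]^{\T}$ so that $x(r)=V^{-1}[{M}~e]y$, the middle $V^{-1}VV^{-1}=V^{-1}$ telescopes:
\[
x(r)^{\T}Vx(r)=y^{\T}[{M}~e]^{\T}V^{-1}VV^{-1}[{M}~e]y=y^{\T}[{M}~e]^{\T}V^{-1}[{M}~e]y=y^{\T}Gy=[r~1]\,G^{-1}[r~1]^{\T},
\]
where I used that $G$ is symmetric, hence $G^{-\T}GG^{-1}=G^{-1}$. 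This is \eqref{eq:efdd1}. Plugging in the explicit $G^{-1}$ above and expanding the scalar $[r~1]G^{-1}[r~1]^{\T}=\tfrac{1}{\Delta}(cr^2-2ar+b)$ yields \eqref{eq:efdd2}.

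The only genuinely delicate points are the bookkeeping ones at the start: verifying that the standing hypotheses (hypothesis~3 together with $n>2$) really do force $posto([{M}~e]^{\T})=2<n$, so that Theorem~\ref{th:sqp} is applicable, and noting that positive definiteness of $V^{-1}$ makes $G$ positive definite and hence $\Delta=\det G>0$, which legitimizes both the $2\times 2$ inversion and the square roots. Everything after that is a direct, linear matrix computation.
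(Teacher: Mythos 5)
Your proposal is correct and takes essentially the same route as the paper's proof: both recast the two constraints as $[{M}~e]^{\T}x=[r~1]^{\T}$, apply Teorema~\ref{th:sqp} with $Q=V$ and $A=[{M}~e]^{\T}$ (using hypothesis~3 for the rank condition), and invert the $2\times 2$ matrix with entries $b,a,a,c$ to pass from \eqref{eq:pfdd1} to \eqref{eq:pfdd2} and from \eqref{eq:efdd1} to \eqref{eq:efdd2}. The only (welcome) refinements on your side are that you deduce \eqref{eq:efdd2} directly from \eqref{eq:efdd1} rather than re-expanding \eqref{eq:pfdd2} as the paper does, and that you justify $\Delta>0$ inside the proof via positive definiteness of $[{M}~e]^{\T}V^{-1}[{M}~e]$, a fact the paper only establishes after Corol\'ario~\ref{cr:prm}.
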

\begin{proof}
    Dado um retorno esperado $r>0$, temos que a  alocação de capital $x(r)$ é a solução do Problema~\eqref{eq:optProbnf}-\eqref{eq:constraintr2nf}.  Primeiramente vamos rescrever este problema com objetivo de aplicar o Teorema~\ref{th:sqp}.  Usando a notação \eqref{eq:vaux}  o  Problema~\eqref{eq:optProbnf}-\eqref{eq:constraintr2nf}  pode ser rescrito da seguinte forma equivalente
    \begin{subequations}
        \begin{alignat}{2}
            \min_{x}     \quad         \frac{1}{2}x^{\T}Vx   \label{eq:optProbref1}          \\
            \text{s. t.} \quad     [{M}~e]^{\T}x = & [r ~1]^{\T}. \label{eq:constraintr1ef1}
        \end{alignat}
    \end{subequations}
    Por hipótese  $M \neq \gamma e$, assim a matriz $[M \,\,e]$ tem posto completo. Então aplicando o Teorema~\ref{th:sqp} com $Q=V$,  $A=[M \,\,e]^T$ e $b=[r\,\,1]^T$ obtemos que a solução do Problema~\eqref{eq:optProbref1}-\eqref{eq:constraintr1ef1} é igual
    \begin{equation} \label{pr:pxr}
        x(r)=V^{-1}[M \,\,e]([M \,\,e]^TV^{-1}[M\,\, e])^{-1}[r \,\,1]^T,
    \end{equation}
    o que prova \eqref{eq:pfdd1}. Agora vamos usar \eqref{eq:caux}  para explicitar $x(r)$ na forma \eqref{eq:pfdd2}. Primeiramente, note que após alguma manipulações algébricas  e usando as igualdades em \eqref{eq:caux},  obtemos as seguintes igualdades:
    $$
        ([M \,\,e]^TV^{-1}[M\,\, e])^{-1}=\begin{bmatrix}MV^{-1}M & MV^{-1}e\\eV^{-1}M & eV^{-1}e\end{bmatrix}^{-1}=\begin{bmatrix}b&a\\a&c\end{bmatrix}^{-1
        }=\frac{1}{\Delta}\begin{bmatrix}c & -a\\-a & b\end{bmatrix}.
    $$
    Assim, combinando a última igualdade com  \eqref{pr:pxr}  e então  usando \eqref{eq:caux} concluímos que
    $$
        x(r)=\frac{1}{\Delta}V^{-1}[M \,\,e]\begin{bmatrix}c & -a\\-a & b\end{bmatrix}[r \,\, 1]^T= \frac{1}{\Delta}V^{-1}[M \,\,e]\begin{bmatrix}cr-a\\b-ar\end{bmatrix}=\frac{cr-a}{\Delta}V^{-1}M-\frac{ar-b}{\Delta}V^{-1}e.
    $$
    Que é a igual a \eqref{eq:pfdd2}. Portanto, a primeira parte do teorema esta provado.  Para provar a segunda parte, vamos inicialmente provar \eqref{eq:efdd1}.  Usando   \eqref{eq:pfdd1}   obtemos
    \begin{align*}
        x(r)^{\T}Vx(r) & =x(r)^{\T} [{M}~e]\left([{M}~e]^{\T}V^{-1}[{M}~e]\right)^{-1}  [r ~1]^{\T}                     \\
                       & =\left( [{M}~e]^{\T}x(r) \right)^{\T}\left([{M}~e]^{\T}V^{-1}[{M}~e]\right)^{-1}  [r ~1]^{\T}.
    \end{align*}
    Assim, usando  \eqref{eq:constraintr1ef1}, a igualdade  \eqref{eq:efdd1} segue. Resta provar \eqref{eq:efdd2}. Para isso, primeiro usamos  \eqref{eq:fef} juntamente com \eqref{eq:pfdd2}  para obter
    \begin{equation} \label{eq:pmfe}
        \sigma(r)=\sqrt{\left(\frac{cr-a}{\Delta}V^{-1}M-\frac{ar-b}{\Delta}V^{-1}e\right)^TV\left(\frac{cr-a}{\Delta}
            V^{-1}M-\frac{ar-b}{\Delta}V^{-1}e\right)}.
    \end{equation}
    Como $V$ é simétrica, $V^{-1}$ também é simétrica. Deste modo, algumas manipulações algébricas mostram que o radicando de \eqref{eq:pmfe} é igual a
    \begin{equation*}
        \left(\frac{cr-a}{\Delta}\right)^2M^TV^{-1}M-\\\frac{(cr-a)(ar-b)}{\Delta^2}M^TV^{-1}e-\frac{(cr-a)(ar-b)}{\Delta^2}e^TV^{-1}M+\left(\frac{ar-b}{\Delta}\right)^2e^TV^{-1}e.
    \end{equation*}
    Então combinando \eqref{eq:pmfe} com a última igualdade  e  usando \eqref{eq:caux}, obtemos após alguns cálculos que
    $$
        \sigma(r)=\sqrt{\frac{1}{\Delta^2}\left((cr-a)^2b-2(cr-a)(ar-b)a+(ar-b)^2c\right)}=\sqrt{\frac{1}{\Delta}(cr^2-2ar+b)},
    $$
    que é a igualdade desejada, o que conclui a prova.
\end{proof}
No próximo   corolário vamos  explicitar  o {\it portfólio ótimo} ou seja  o {\it portfólio de risco mínimo}, o qual vamos denotar por $P_{x_{min}}$.
\begin{corollary}  \label{cr:prm}
    Seja  $x_{min}$  a alocação de capital associada ao portfólio ótimo $P_{x_{min}}$, com retorno $r_{min}$ e menor risco possível  $\sigma_{min}$. Então
    \begin{equation} \label{eq:prmin}
        x_{min}=\frac{1}{c}V^{-1}e, \qquad r_{min}=\frac{a}{c}, \qquad \sigma_{min}=\frac{1}{\sqrt{c}}.
    \end{equation}
\end{corollary}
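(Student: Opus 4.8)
O plano \'e obter o portf\'olio de risco m\'inimo $P_{x_{min}}$ minimizando a curva da fronteira eficiente $\sigma(r)$ dada no Teorema~\ref{th:ef} sobre os retornos admiss\'iveis $r>0$ e, em seguida, extraindo a aloca\c c\~ao correspondente da f\'ormula~\eqref{eq:pfdd2}. Como $t\mapsto\sqrt t$ \'e crescente, basta minimizar $\sigma(r)^2=\frac{1}{\Delta}\left(cr^2-2ar+b\right)$.

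Primeiro eu observaria que $c=e^{\T}V^{-1}e>0$, pois $V^{-1}$ \'e positiva definida, e que $\Delta=bc-a^2>0$: esta \'ultima \'e uma desigualdade do tipo Cauchy--Schwarz para o produto interno $\langle u,v\rangle:=u^{\T}V^{-1}v$ em $\mathbb{R}^{n\times1}$, sendo estrita precisamente porque a terceira hip\'otese pro\'ibe que $M$ seja m\'ultiplo escalar de $e$. Logo $r\mapsto\frac{1}{\Delta}\left(cr^2-2ar+b\right)$ \'e uma par\'abola estritamente convexa, cujo \'unico minimizador resolve $\frac{1}{\Delta}(2cr-2a)=0$, isto \'e, $r_{min}=a/c$. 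Substituindo $r_{min}=a/c$ em $\sigma(r)^2$ obtemos $\sigma_{min}^2=\frac{1}{\Delta}\left(\frac{a^2}{c}-\frac{2a^2}{c}+b\right)=\frac{1}{\Delta}\cdot\frac{bc-a^2}{c}=\frac1c$, donde $\sigma_{min}=1/\sqrt c$. Substituindo $r_{min}=a/c$ em~\eqref{eq:pfdd2}, o coeficiente $\frac{cr-a}{\Delta}$ de $V^{-1}M$ se anula, enquanto o coeficiente $\frac{ar-b}{\Delta}$ de $V^{-1}e$ vale $\frac{a^2/c-b}{\Delta}=\frac{a^2-bc}{c\Delta}=-\frac1c$, o que d\'a $x_{min}=\frac1c V^{-1}e$.

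Os c\'alculos acima s\~ao rotineiros; o \'unico ponto que exige cuidado \'e a desigualdade estrita $\Delta>0$, que garante tanto que a par\'abola tem concavidade voltada para cima --- de modo que o ponto cr\'itico \'e de fato um m\'inimo --- quanto que a divis\~ao por $\Delta$ \'e leg\'itima, e \'e exatamente a\'i que a terceira hip\'otese \'e usada. Eu tamb\'em observaria que o enunciado pressup\~oe implicitamente $a=M^{\T}V^{-1}e>0$, pois caso contr\'ario $r_{min}=a/c$ ficaria fora da faixa admiss\'ivel $r>0$ e o m\'inimo de $\sigma$ sobre $r>0$ n\~ao seria atingido em um ponto interior.
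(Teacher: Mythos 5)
A sua demonstra\c c\~ao est\'a correta e segue essencialmente o mesmo caminho da prova do artigo: minimizar a express\~ao \eqref{eq:efdd2} da fronteira eficiente em $r$ para obter $r_{min}=a/c$ e em seguida substituir esse valor em \eqref{eq:pfdd2} e \eqref{eq:efdd2}. As \'unicas diferen\c cas s\~ao que voc\^e minimiza $\sigma(r)^2$ em vez de resolver $\sigma'(r)=0$ diretamente, e que verifica explicitamente $c>0$ e $\Delta>0$ (via Cauchy--Schwarz para $\langle u,v\rangle=u^{\T}V^{-1}v$) para garantir a convexidade estrita --- justificativa que o artigo s\'o fornece ap\'os o corol\'ario, na discuss\~ao da hip\'erbole \eqref{eq:hiperbole} ---, o que torna o seu argumento ligeiramente mais completo.
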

\begin{proof}
    Primeiro note que expressão em \eqref{eq:efdd2} implica que   a função risco $r\mapsto \sigma(r)$   é diferenciável. Então seu o mínimo global $\sigma_{min}$ é atingido em $r_{min}$, o qual é obtido resolvendo a equação  $\sigma'(r_{min})=0$, isto é,
    $$
        \sigma'(r_{min})=\frac{1}{2\sqrt{(1/\Delta)(cr_{min}^2-2ar_{min}+b)}}\frac{2cr_{min}-2a}{\Delta}=0.
    $$
    Assim, concluímos  que  $2cr_{min}-2a=0$, o que implica $ r_{min}=a/c$ e a segunda igualdade em \eqref{eq:prmin} está provada. Agora, vamos provar a primeira igualdade em \eqref{eq:prmin}. Como  por definição  $x_{min}:=x(r_{min})$,  usando   \eqref{eq:pfdd2}, $ r_{min}=a/c$ e \eqref{eq:caux}  concluímos
    $$
        x_{min}=\frac{c(a/c)-a}{\Delta}V^{-1}M-\frac{a(a/c)-b}{\Delta}V^{-1}e=-\frac{a^2-bc}{\Delta c}V^{-1}e=\frac{1}{c}V^{-1}e,
    $$
    o que prova a primeira igualdade em \eqref{eq:prmin}.  Finalmente, vamos provar a terceira igualdade. Como $\sigma_{min}=\sigma(r_{min})$, usando \eqref{eq:efdd2}, $ r_{min}=a/c$ e  \eqref{eq:caux}   obtemos
    $$
        \sigma_{min}=\sqrt{\frac{1}{\Delta}(c(a/c)^2-2a(a/c)+b)}=\sqrt{\frac{1}{\Delta}\frac{a^2-2a^2+bc}{c}}=\frac{1}{\sqrt{c}},
    $$
    que é a igualdade desejada, e assim a prova do corolário  está concluída.
\end{proof}
Finalmente, encerramos esta seção mostrando que parametrização da fronteira eficiente dada por \eqref{eq:efdd2} é um ramo de hipérbole em $\mathbb{R}^2$ nas variáveis $\sigma$ e $r$. Sabemos que $V$ é uma matriz definida positiva, assim  $V^{-1}$ também é definida positiva. Deste modo, segue de \eqref{eq:caux}  que  $b>0$ e $c>0$.  A terceira   hipótese acima implica que  não existe $\gamma \in \mathbb{R}$ tal que $M=\gamma e$,  assim o  vetor  $u:=aM-be \in \mathbb{R}^n \neq 0$. Deste modo, usando que  $V^{-1}$ também é definida positiva,  temos
$$
    0<(aM-be)^TV^{-1}(aM-be)=a^2M^TV^{-1}M-2abM^TV^{-1}e+b^2e^TV^{-1}e.
$$
Então usando \eqref{eq:caux} obtemos  $a^2b-2a^2b+b^2c>0$, ou equivalentemente, $b(bc-a^2)=b\Delta>0$. Por outro lado, \eqref{eq:efdd2} implica que $\Delta \sigma ^2=cr^2-2ar+b$. Reescrevendo esta igualdade na forma de  uma  equação geral de uma cônica  em $\mathbb{R}^2$ nas variáveis $\sigma$ e $r$  obtemos
\begin{equation}  \label{eq:hiperbole}
    \Delta \sigma^2+0\sigma r-cr^2+0\sigma+2ar-b=0.
\end{equation}
Como o discriminante desta equação é $0^2-4\Delta(-c)=4c\Delta>0$,  ela de fato  representa uma  hipérbole $\mathbb{R}^2$ nas variáveis $\sigma$ e $r$, veja \cite[Teorema 11.4.7]{Boldrini1980}. O gráfico da hipérbole \eqref{eq:hiperbole} ou fronteira eficiente  é representado na Figura~\ref{fig:mr}. Nesta figura   o  ponto $(\sigma_{min},r_{min})$   corresponde ao seu vértice.
\begin{figure}[H]
    \centering
    \includegraphics[width=0.4\linewidth]{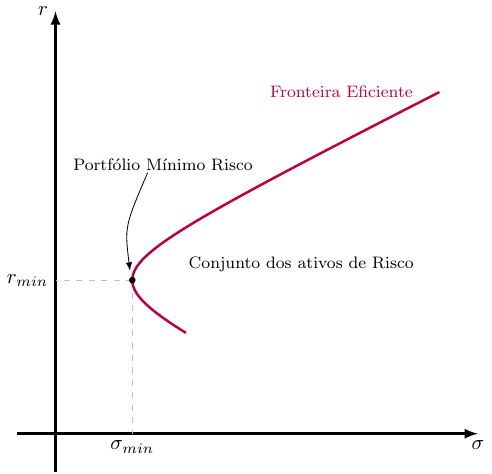}\\
    \caption{\footnotesize  Portfólio de risco mínimo.}
    \label{fig:mr}
\end{figure}
%%%%%%%%%%%%%%%%%%%%%%%%%%%%%%%%%%%%%
\section{Portfólio eficiente  com  um ativo livre de risco} \label{sec:pealr}

Nesta seção nosso o objetivo é apresentar uma fórmula explícita  para o {\it portfólio eficiente} no qual um dos  ativos que compõe  este portfólio  é livre de risco. Em outros palavras, vamos determinar um  portfólio  que maximiza a taxa de retorno para um determinado nível de risco fixado, com a inclusão de um ativo com risco zero ou o menor risco possível existente no marcado em consideração. Vamos primeiro introduzir algumas notações úteis, a fim de enunciar uma nova reformulação para o Problema \eqref{eq:optProbnf}-\eqref{eq:constraintr2nf} que agora irá  considerar um ativo livre de risco.

Considere $A_1, \ldots, A_n,$ ativos com risco e $A_f$ um ativo  livre de risco. Seja  $x_1, \ldots, x_n, x_{f}$   as frações dos recursos do investidor aplicadas nos ativos  $A_1, \ldots, A_n, A_{f}$, respectivamente. Denotamos por
\begin{equation} \label{eq:muo}
    x:=(x_{1}, \ldots, x_{n})\in {\mathbb R}^{n}, \qquad {\hat x}:=(x, x_{f})\in {\mathbb R}^{n+1}={\mathbb R}^{n}\times {\mathbb R}
\end{equation}
os {\it vetores dos pesos} associados ao portfólio $P_{x}$ e $P_{\hat x}$, respectivamente. Seja ${\mu}_j$ o retorno médio do ativo $A_j$, para $i=1, \ldots, n$ e $r_f$ o retorno do ativo $A_f$, respectivamente. Assim, o retorno do portifólio  $P_{\hat x}$ é dado por
\begin{equation} \label{eq:rfr}
    {\hat r}_{\hat x}={M}^{T}x + r_fx_f
\end{equation}
onde ${M}:=\left({\mu}_1, \ldots, {\mu}_n\right)$ é o vetor dos retornos médios do ativos  $A_1, \ldots, A_n$. A prova de \eqref{eq:rfr} segue a mesma idéia da prova do   Lema~\ref{le:Mx}  e será omitida.  Seja $\sigma_j$ o desvio padrão do ativo $A_j$, para $i=1, \ldots, n$ e $\sigma_f$ o desvio padrão do ativo $A_f$, respectivamente. Como $A_{f}$ é um ativo livre de risco  e não correlacionado aos ativos $A_1, \ldots, A_n$, a matriz de covariância associada aos ativos $A_1, \ldots, A_j, \ldots, A_n, A_{f} $ tem uma linha e uma coluna de zeros que correspondem ao ativo $A_{f}$. Assim, os desvios padrão $\sigma{_{\hat x}}$ do portfólio  $P_{\hat x}$ é igual ao
desvios padrão $\sigma{_{x}}$  do portfólio  $P_{x}$, ou seja, vale a seguinte iguladade
\begin{equation} \label{eq:sdfr3}
    \sigma_{\hat x}= \sigma{_x},
\end{equation}
veja Definição~\ref{def:cov} e  Lemma~\ref{le:fxmc} para mais detalhes. Portanto, usando \eqref{eq:rfr} e \eqref{eq:sdfr3}, surge uma nova reformulação para o Problema \eqref{eq:optProbnf}-\eqref{eq:constraintr2nf} que considera um ativo livre de risco. De fato, para cada retorno fixo $r >0$, o vetor peso ${\hat x}(r)$, associado ao portfólio ótimo $P_{\hat x(r)}$, é a solução do seguinte problema de otimização parametrizado

\begin{subequations}
    \begin{alignat}{2}
        \min_{x}     \quad  & \frac{1}{2}x^{\T}Vx   \label{eq:optProbrefora}      \\
        \text{s. t.}  \quad & {M}^{\T}x + r_fx_f = {r} \label{eq:constraintr1ora} \\
                            & e^{\T}x+ x_f  =1 \label{eq:constraintr2ora}
    \end{alignat}
\end{subequations}
A partir da equação \eqref{eq:constraintr2ora}, segue que $x_f = 1 - e^{\T}x$, o que, substituindo \eqref{eq:constraintr1ora}, resulta em uma nova formulação para o Problema~\eqref{eq:optProbrefora} -- \eqref{eq:constraintr2ora}, isto é, no seguinte problema de otimização parametrizado:
\begin{subequations}
    \begin{alignat}{2}
        \min_{x}     \quad      & \frac{1}{2}x^{\T}Vx   \label{eq:optProbrmb}                      \\
        \text{s. t.}      \quad & \left({M}- r_fe\right)^{\T}x  = {r}- r_f \label{eq:constraintmp}
    \end{alignat}
\end{subequations}
Na próxima proposição apresentamos a solução do Problema~\eqref{eq:optProbrmb} - \eqref{eq:constraintmp}, assim como a fronteira eficiente associada.
\begin{theorem} \label{th:efmp}
    Seja $P_{{\hat x}(r)}$ um  portfólio com alocação de capital ${\hat x}(r)$ associado a um retorno esperado $r>0$. Então   $0<r \mapsto {\hat x}(r):=({x}(r), x_f(r))$ é dada por:
    \begin{equation} \label{eq:pfddmp}
        {x}(r):= \frac{r -r_f}{\left({M}- r_fe\right)^{\T}V^{-1}\left({M}- r_fe\right)}V^{-1}\left({M}- r_fe\right), \qquad \qquad x_f(r):=1-e^{\T}x(r).
    \end{equation}
    Consequentemente, a fronteira eficiente associada ao portfólio $P_{{\hat x}(r)}$ são dois segmentos de reta no espaço risco-retorno parametrizada por $r $ como segue
    \begin{equation} \label{eq:efddmp}
        0<r\mapsto {\sigma}(r):=\frac{\left|r -r_f\right|}{\sqrt{\left({M}- r_fe\right)^{\T}V^{-1}\left({M}- r_fe\right)}}
    \end{equation}
\end{theorem}
\begin{proof}
    Segue-se de (h2) que $({M}- r_fe )^{\T}$ tem posto de linha completo. A hipótese (h2) implica que $V$ é invertível. Assim, \eqref{eq:pfddmp} segue aplicando Teorema~\ref{th:sqp} com $Q=V$, $A= ({M}- r_fe )^{\T}$ e $b={r }-r_f$. Prosseguimos para provar \eqref{eq:efddmp}. Segue de \eqref{eq:pfddmp} que
    $$
        x(r)^{\T}Vx(r)=\frac{\left(r -r_f\right)^2}{\left({M}- r_fe\right)^{\T}V^{- 1}\left({M}- r_fe\right)}
    $$
    Considerando que $V^{-1}$ é  positiva definida, a combinação de \eqref{eq:fef} e \eqref{eq:sdfr3} resulta em \eqref{eq:efddmp}.
\end{proof}
\begin{corollary} \label{cr:opt}
    Seja   $r^*>0$ tal que $x_f(r^*)=0$, ou seja, todos os recursos do investidor estão alocados em ativos de risco.  Então
    \begin{equation} \label{eq:eqpi}
        r^*=  \frac{{M}^{\T}V^{-1}\left({M}- r_fe\right)}{e^{\T}V^{-1}\left({M}- r_fe\right)}, \qquad \qquad {x}(r^*)= \frac{1}{e^{\T}V^{-1}\left({M}- r_fe\right)}V^{-1}\left({M}- r_fe\right)
    \end{equation}
\end{corollary}
\begin{proof}
    Se $x_f(r^*)=0$, então segue da segunda igualdade em \eqref{eq:pfddmp} que $e^{\T}x(r^*)=1$. Agora, usando a primeira desigualdade em \eqref{eq:pfddmp} temos
    $$
        1= \frac{r^* -r_f}{\left({M}- r_fe\right)^{\T}V^{-1}\left({M}- r_fe\right)} e^{\ T}V^{-1}\left({M}- r_fe\right)
    $$
    Após algumas manipulações algébricas na última desigualdade obtemos a primeira igualdade em \eqref{eq:eqpi}. Usando novamente as primeiras desigualdades em \eqref{eq:pfddmp} e \eqref{eq:eqpi}, obtemos
    \begin{align*}
        {x}(r^*) & = \frac{\displaystyle \frac{{M}^{\T}V^{-1}\left({M}- r_fe\right)}{e^{\T} V^{-1}\left({M}- r_fe\right)} -r_f}{\left({M}- r_fe\right)^{\T}V^{-1}\left({M} - r_fe\right)}V^{-1}\left({M}- r_fe\right)               \\
                 & = \frac{\displaystyle \frac{\left({M}- r_fe\right)^{\T}V^ {-1}\left({M}- r_fe\right)}{e^{\T}V^{-1}\left({M}- r_fe\right)} }{\left({M}- r_fe\right)^{\T}V^{-1}\left({M}- r_fe\right)}V^{-1}\left({M}- r_fe\right)
    \end{align*}
    que é equivalente à segunda igualdade em \eqref{eq:eqpi}.
\end{proof}
%%%%%%%%%%%%%%%%%%%%%%%%%%%%%%%%%%%%%%%%%%%%%%%%%%%%%%
\subsection{Índice sharpe e linha de alocação de capital}  \label{sec:CAL}
O {\it raio sharpe} ou {\it índice sharpe} dos retornos do portfólio  mede a taxa de retorno por risco, caracterizando o quão bem o retorno de um ativo compensa o investidor pelo risco assumido. O índice sharpe é  formalmente definido  como se segue
\begin{equation} \label{eq:siap}
    S_x=\frac{R_x-r_f}{\sigma_x}
\end{equation}
onde $r_f$ é a taxa livre de risco, $R_x$ é o retorno do portfólio, $\sigma_x$ é o desvio padrão do portfólio  $P_x$ e $x$ é o vetor de peso associado ao portfólio  $P_x$, ver \cite{Sharpe1994}. O índice de sharpe está intimamente relacionado com a {\it linha de alocação de capital}, pois sua inclinação é o índice de sharpe do portfólio  eficiente. De fato, a linha de alocação de capital é a linha tangente a fronteira eficiente passando pelo  ponto do ativo livre risco $r_f$, veja Figura~\ref{fig:CapLineAloc}. Do ponto de vista do investidor, o ponto de tangência pode ser visto como o portfólio ideal. A definição analítica da linha de alocação de capital é a seguinte
\begin{equation} \label{eq:cal}
    R_x:=r_f+ \frac{r_*-r_f}{\sigma_*}\sigma_x
\end{equation}
onde $r_{*}$ é o retorno e $\sigma_{*}$ é o desvio padrão do portfólio  eficiente $P_{x_{*}}$, onde $x_{*}$ é o vetor peso associado, ver Figura~\ref{fig:CapLineAloc}.
\begin{figure}[H]
    \centering
    \includegraphics[width=0.4\linewidth]{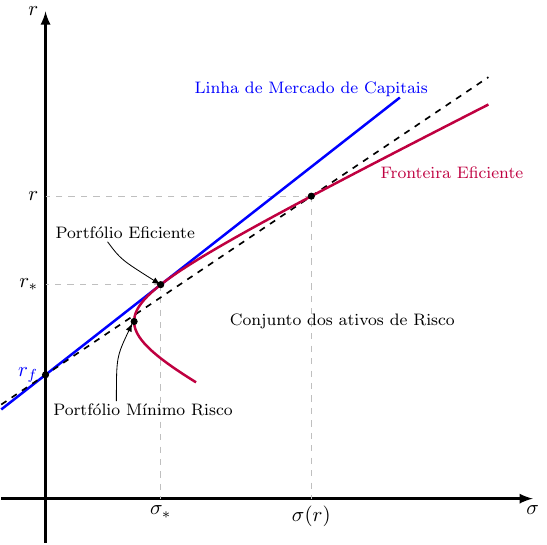}\\
    \caption{Linha de alocação de capital}
    \label{fig:CapLineAloc}
\end{figure}
\begin{remark}
    Segue-se de \eqref{eq:cal} que todos os portfólios sobre  linha de alocação de capital têm o mesmo índice de sharpe que o portfólio eficiente $P_{x_{*}}$, ou seja,
    $$
        \frac{ R_x-r_f}{\sigma_x}= \frac{r_*-r_f}{\sigma_*}
    $$
    Por outro lado, a inclinação da linha de alocação de capital é a que tem a maior inclinação comparado com  qualquer linha que une o ativo livre de risco $r_f$ a um ponto da fronteira eficiente, ou seja, o índice de sharpe de todos os portfólios  na linha de alocação de capital é maior ou igual a qualquer índice de sharpe dos portfólios na fronteira eficiente, ou seja,
    $$
        \frac{ r-r_f}{\sigma(r)}\leq \frac{r_*-r_f}{\sigma_*},
    $$
    veja Figure~\ref{fig:CapLineAloc}.
\end{remark}
Na seguinte proposição, calculamos o portfólio eficiente $P_{{x}(r^*)}$, veja a Figura~\ref{fig:CapLineAloc}. Em particular, recuperamos os mesmos resultados do Corolário~\ref{cr:opt}.
\begin{proposition}\label{eq:ep}
    O portfólio eficiente $P_{{x}(r^*)}$ com retorno $r_{*}$,  vetor de alocação de capital ${x}(r^*)$  e risco $\sigma_{*}$ são dados, respectivamente, por:
    \begin{equation} \label{eq:opttau}
        r^*=  \frac{{M}^{\T}V^{-1}\left({M}- r_fe\right)}{e^{\T}V^{-1}\left({M}- r_fe\right)}, \qquad \qquad {x}(r^*)= \frac{1}{e^{\T}V^{-1}\left({M}- r_fe\right)}V^{-1}\left({M}- r_fe\right)
    \end{equation}
    \begin{equation} \label{eq:optrisk}
        \sigma_*=  \frac{\sqrt{\left({M}- r_fe\right)^{\T}V^{-1}\left({M}- r_fe\right)}}{e^{\T}V^{-1}\left({M}- r_fe\right)}
    \end{equation}
    see   Figure~\ref{fig:CapLineAloc}.
\end{proposition}
\begin{proof}
    Seja $ 0<r\mapsto \sigma(r)$ a fronteira eficiente no espaço $risco\times retorno$ a qual é dada no  Teorema~\ref{th:ef}. Então,  no ponto de tangência na fronteira eficiente temos
    \begin{equation} \label{eq:tc}
        \sigma'(r_*)=\frac{\sigma_*}{r_*-r_f}
    \end{equation}
    Nossa tarefa é resolver a equação \eqref{eq:tc}. Primeiro, usando \eqref{eq:efdd1} obtemos $\sigma'(r_*)=(cr_*-a)/(\Delta \sigma_* )$. Assim, segue de \eqref{eq:tc} que
    \begin{equation} \label{eq:tcd}
        \sigma^2_*= \frac{(cr_*-a)(r_*-r_f)}{\Delta}
    \end{equation}
    Usando novamente \eqref{eq:efdd1} temos $ \sigma^2_*=(cr_*^2-2ar_*+b)/\Delta$, que combinado com \eqref{eq:tcd}  nos fornece
    $$
        (cr_*-a)(r_*-r_f)=cr_*^2-2ar_*+b.
    $$
    Explicitando $r_*$  na  igualdade obtemos  que
    \begin{equation} \label{eq:epe1}
        r_*=\frac{b-ar_f}{a-cr_f}
    \end{equation}
    Usando \eqref{eq:caux} obtemos a segunda igualdade em \eqref{eq:opttau}. Para provar a primeira igualdade em \eqref{eq:opttau}, primeiro usamos \eqref{eq:pfdd1} para obter
    \begin{equation} \label{eq:epe}
        x_*=\frac{cr_* -a}{\Delta}V^{-1}{M}-\frac{ar_ * -b}{\Delta}V^{-1}e.
    \end{equation}
    Assim, combinando \eqref{eq:epe1} e \eqref{eq:epe} com \eqref{eq:caux} obtemos a primeira igualdade em \eqref{eq:opttau}. Assim, devido a
    $$
        \sigma_*=\sqrt{x_*^{\T}Vx_*},
    $$
    a última desigualdade em \eqref{eq:opttau} segue usando a primeira, o que conclui a prova.
\end{proof}

%%%%%%%%%%%%%%%%%%%%%%%%%%%%%%%%%%%%%%%%%%%%%%%%%%%%%%
\subsubsection{Modelo de precificação de ativos de capital}  \label{sec:CAPM}
O {\it modelo de precificação de ativos de capital} é um conceito teórico para precificação de um ativo  ou portfólio individual. Em particular, pode ajudar os investidores a entender a relação entre o retorno esperado e o risco à medida que tomam melhores decisões sobre a adição de ativos a um portfólio. No próximo corolário apresentamos uma formulação analítica para o modelo de precificação de ativos de capital com respeito ao portfólio eficiente.
\begin{corollary}
    Seja $P_{x_{*}}$ um portfólio eficiente com retorno $r_{*}$ e risco $\sigma_{*}$. Então,
    \begin{equation} \label{eq:capv}
        {M}- r_fe= \frac{1}{\sigma^2_* } \left(r_*- r_f\right)Vx_*
    \end{equation}
    Como consequência, o retorno médio ${\mu}_j$ do ativo $A_j$ é dado por
    \begin{equation} \label{eq:capvc}
        {\mu}_j= r_f + \beta_j \left(r_*- r_f\right), \qquad \quad \beta_j = \frac{V^{\T}_jx_*}{\sigma^2_*}=cov( {\mu}_j, {M}), \qquad j=1, \ldots, n,
    \end{equation}
    onde $V_j:=(\sigma_{j1}, \ldots \sigma_{jn})^{\T} \in \mathbb{R}^{n\times 1}$  é a $j$-ésima linha da matriz de covariância  $V$.
\end{corollary}
\begin{proof}
    Segue da Proposição~\ref{eq:ep} que
    \begin{equation} \label{eq:efin}
        Vx_*= \frac{1}{e^{\T}V^{-1}\left({M}- r_fe\right)} \left({M}- r_fe\right)
    \end{equation}
    Como $r_*={M}^{\T}x_*$ e $e^{\T}x_*=1$, a última igualdade implica que
    \begin{equation} \label{eq:efop}
        x_*^{\T}Vx_*= \frac{1}{e^{\T}V^{-1}\left({M}- r_fe\right)} \left({M}^{\T }x_*- r_fe^{\T}x_*\right)= \frac{1}{e^{\T}V^{-1}\left({M}- r_fe\right)} \left(r_ *- r_f\right)
    \end{equation}
    Assim, devido a $\sigma^2_{*}=x_*^{\T}Vx_*$, concluímos que $e^{\T}V^{-1}\left({M}- r_fe\right ) = \left(r_*- r_f\right)/ \sigma^2_* $. Assim, substituindo esta igualdade em \eqref{eq:efin} resulta
    $$
        Vx_*= \frac{\sigma^2_*}{\left(r_*- r_f\right)} \left({M}- r_fe\right)
    $$
    que é equivalente a \eqref{eq:capv}. Como $M^{\T}:=(\mu_1,  \ldots, \mu_n)$ e $e^{\T}=(1, \ldots, 1)$.  As igualdades em \eqref{eq:capvc} são uma consequência imediata de \eqref{eq:capv}.
\end{proof}
%%%%%%%%%%%%%%%%%%%%%%%%%%%%%%%%%%%%%%%%%%%%%%%%%%%%%%
\subsection{Linha do mercado de capitais}  \label{sec:SR}
Foi mostrado na seção anterior que a carteira ideal ocorre no ponto onde a linha de alocação de capital é tangente à fronteira eficiente. Nesse sentido, a linha de mercado de capitais é um caso especial de linha de alocação de capital baseada no modelo de precificação de ativos de capital. Na verdade, a linha de mercado de capitais é uma linha de alocação de capital que representa uma combinação do ativo livre de risco e do portfólio  do mercado. Mais precisamente, a linha do mercado de capitais é a linha tangente que passa pelo  ponto do ativo livre de  risco $r_f$ até a fronteira eficiente dos ativos com risco, veja a Figura~\ref{fig.CapLineAloc}. A linha do mercado de capitais é definida analiticamente por
\begin{equation} \label{eq:cml}
    R_p:=r_f+ \frac{r_T-r_f}{\sigma_T}\sigma_p
\end{equation}
onde $r_T=$ retorno do mercado e $\sigma_T=$ desvio padrão dos retornos do mercado.
\begin{figure}[H]
    \centering
    \includegraphics[width=0.4\linewidth]{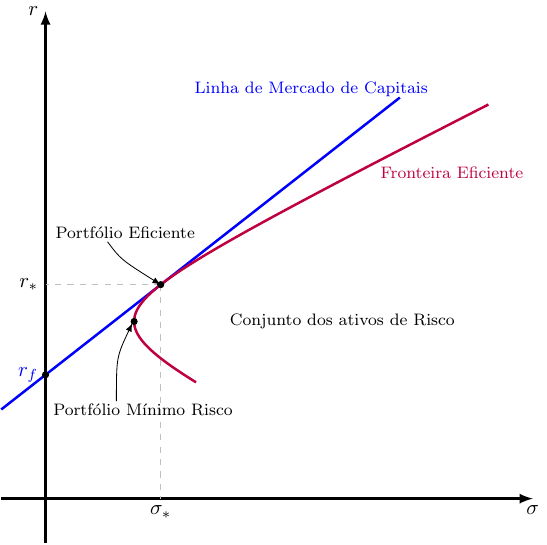}\\
    \caption{Capital market line.}
    \label{fig.CapLineAloc}
\end{figure}

\begin{remark}
    Segue-se de \eqref{eq:cml} que todas as carteiras na linha do mercado de capitais têm o mesmo índice Sharpe  do portfólio  do mercado, ou seja,
    $$
        \frac{ R_p-r_f}{\sigma_p}= \frac{r_T-r_f}{\sigma_T}
    $$
    De fato, a inclinação da linha do mercado de capitais é o índice de Sharpe do portfólio  do mercado.
\end{remark}
%%%%%%%%%%%%%%%%%%%%%%%%%%%%%%%%%%%%%
\section{Aplicação} \label{Sec:Aplicacao}
%%%%%%%%%%%%%%%%%%%%%%%%%%%%%%%%%%%%%

Nesta seção, apresentaremos uma estratégia bastante simples utilizando os resultados das Seções~\ref{sec:PRM} e \ref{sec:pealr}, mais especificamente, faremos uso do Corolário~\ref{cr:prm}, do Teorema~\ref{th:efmp} e do Corolário~\ref{cr:opt}. Para isso, vamos construir um {\it portfólio contendo apenas dois ativos de risco} e consideraremos estratégias com e sem um ativo livre de risco. Para implementar essas estratégias, desenvolvemos um programa em Python e utilizamos o ambiente do Google Colab, que através de um Notebook Jupyter, facilita a conexão entre o nosso programa e a planilha que contém os dados do Google Finance.

%%%%%%%%%%%%%%%%%%%%%%%%%%%%%%%%%%%%%
\subsubsection*{Seleção dos ativos} \label{Sec:SelAtivos}
%%%%%%%%%%%%%%%%%%%%%%%%%%%%%%%%%%%%
Visando sucesso da estratégia, de acordo com as Observações~\ref{re:ddpmr2} e \ref{obs:cn}, é conveniente selecionar  os dois ativos  com   as seguintes características:
\begin{enumerate}
    \item[(i)]  Os ativos devem possuir  uma distribuição normal dos  retornos no período em consideração;
    \item[(ii)] Os ativos devem possuir baixa correlação (de preferência correlação negativa) um com o outro na maior parte do período em consideração.
\end{enumerate}
Selecionamos os ativos    IVVB11 e  BOVA11,  com  dados históricos entre \textcolor{blue}{$01/2017$ e $12/2023$}, segue na Figura~\ref{fig:raBOVA11IVVB11} o comportamento destes ativos no período considerado.
\begin{figure}[H]
    \centering
    \includegraphics{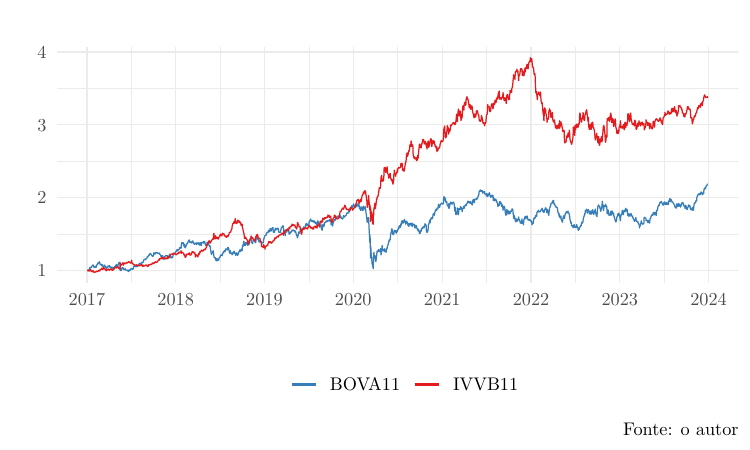}\\
    \caption{\footnotesize  Retorno acumulado.}
    \label{fig:raBOVA11IVVB11}
\end{figure}
A seguir  vamos apresentar os motivos da escolha destes dois ativo.  Estes ativos como é mostrado na Figuras~\ref{DistRetornos} possuem distribuição dos  retornos neste  período aproximadamente  normal  e   assim podemos assumir que em certo sentido   ambos satisfazem o item~(i).
\begin{figure}[H]
    \centering
    \subfloat[Distribuição retornos BOVA11.]{\label{a}\includegraphics[width=0.5\linewidth]{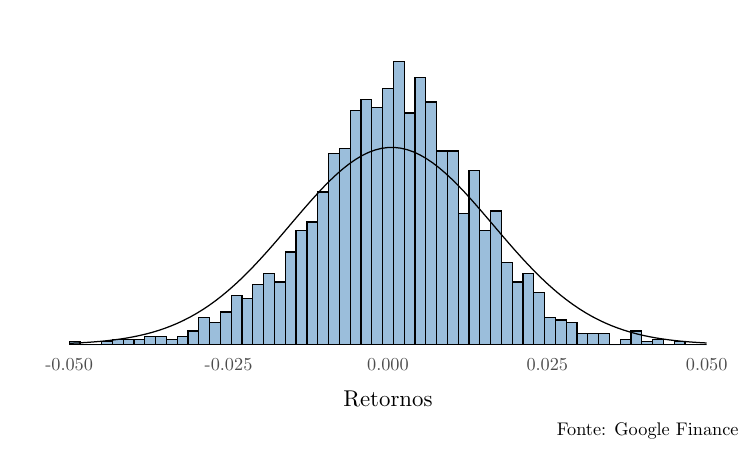}}
    \subfloat[Distribuição retornos IVVB11.]{\label{b}\includegraphics[width=0.5\linewidth]{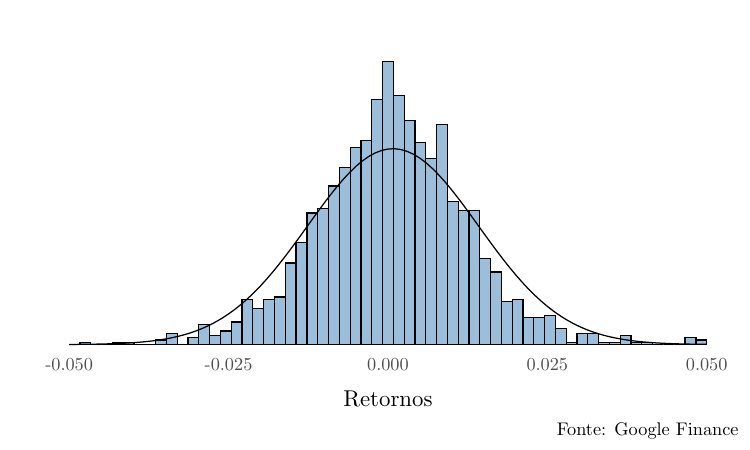}}
    \caption{\footnotesize  Distribuição dos retornos do BOVA11 e IVVB11 de $01/2017$ à $12/2023$. }
    \label{DistRetornos}
\end{figure}
Historicamente, IVVB11 e BOVA11 têm, na maior parte do tempo, baixa ou negativa correlação entre si. No entanto, podem ocorrer períodos de correlação positiva. Veja na Tabela~\ref{Tab:Corr} as correlações anuais do período considerado.
\begin{table}[H]
    \begin{center}
        \begin{tabular}{c|rrrrrrr}

                    & \text{2017} & \text{2018} & \text{2019} & \text{2020} & \text{2021} & \text{2022} & \text{2023} \\
            \hline
            $ \rho$ & $0.58$      & $-0.49$     & $0.78$      & $0.28$      & $-0.74$
                    & $-0.09$     & $0.82$                                                                            \\
        \end{tabular}
        \caption{ \footnotesize Correlação entre IVVB11 e  BOVA11 no período de  $01/2017$ e $12/2023$.}
        \label{Tab:Corr}
    \end{center}
\end{table}
Na Figura~\ref{Correlacao}  está representado graficamente os dados da Tabela~\ref{Tab:Corr}, ou seja,  a   correlação entre  BOVA11 e IVVB11 nos anos de  $2017$ a $2023$.
\begin{figure}[H]
    \centering
    \centering
    \subfloat[Correlação 2017.]{\label{a}\includegraphics[width=0.25\linewidth]{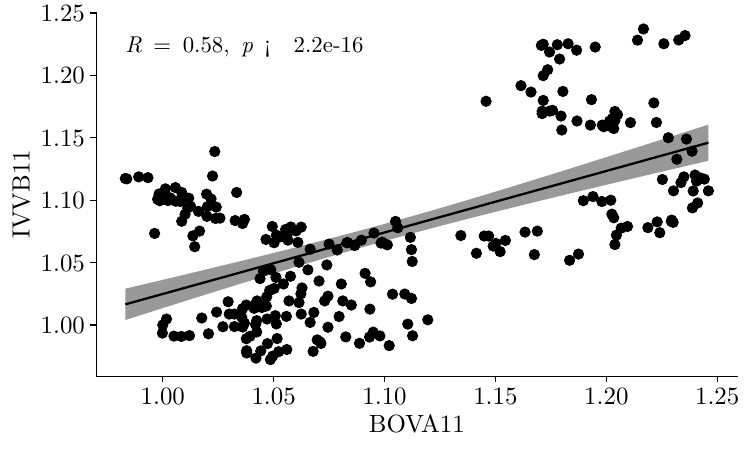}}
    \subfloat[Correlação 2018.]{\label{a}\includegraphics[width=0.25\linewidth]{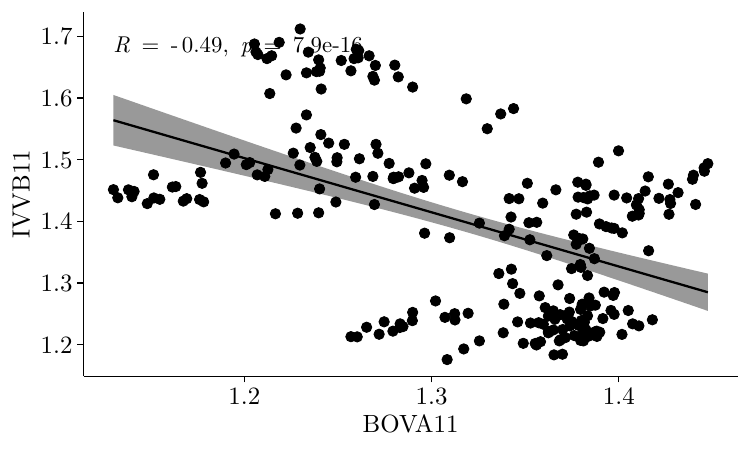}}
    \subfloat[Correlação 2019.]{\label{a}\includegraphics[width=0.25\linewidth]{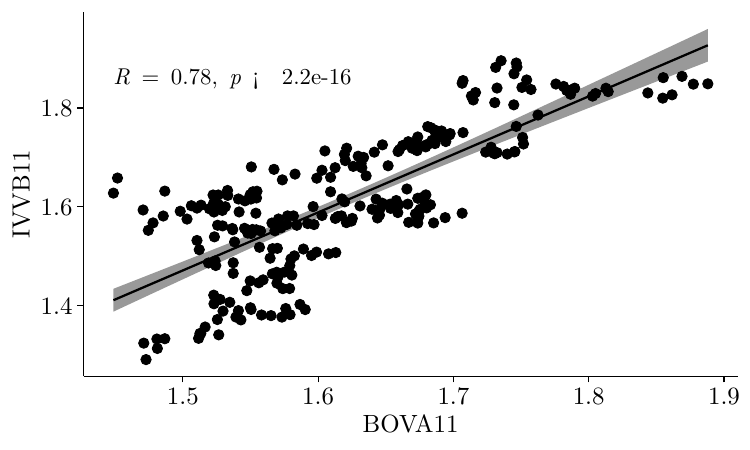}}
    \subfloat[Correlação 2020.]{\label{a}\includegraphics[width=0.25\linewidth]{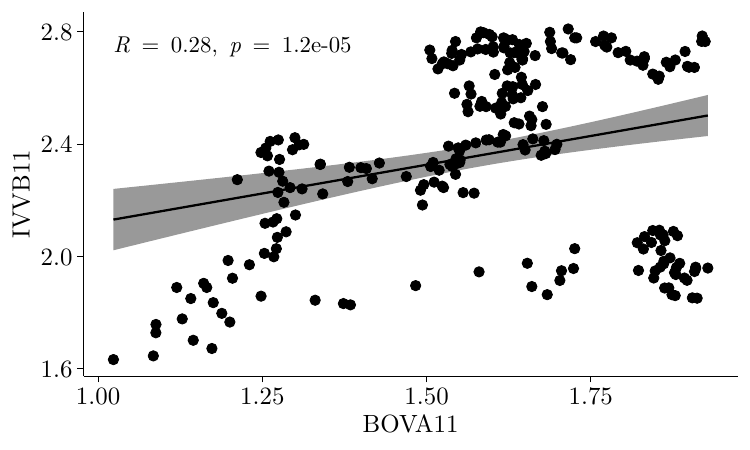}} \\
    \subfloat[Correlação 2021.]{\label{a}\includegraphics[width=0.25\linewidth]{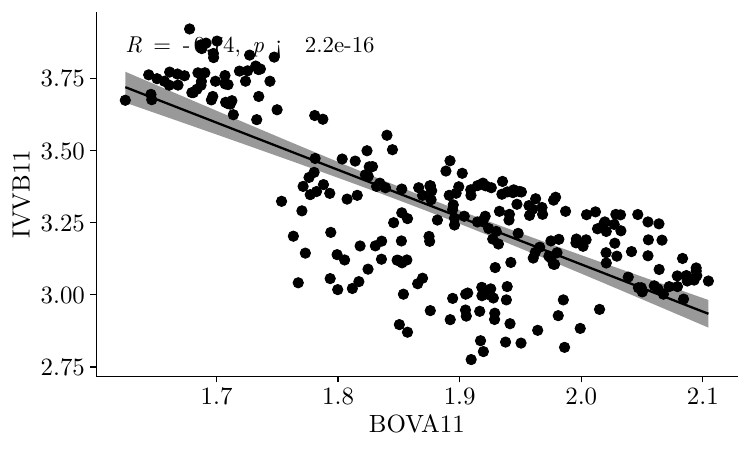}}
    \subfloat[Correlação 2022.]{\label{a}\includegraphics[width=0.25\linewidth]{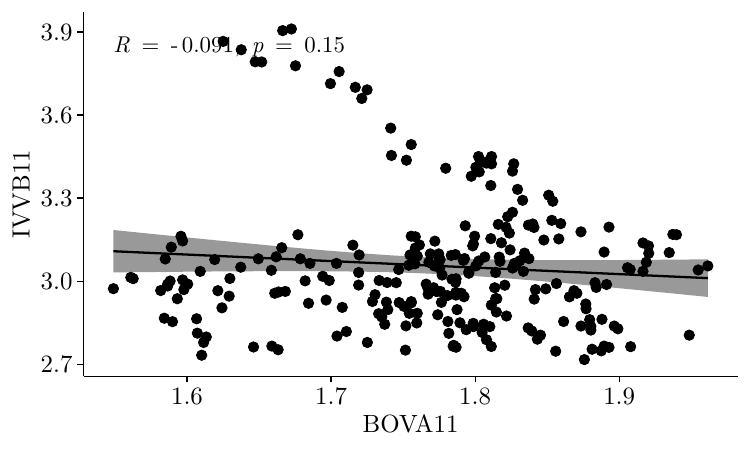}}
    \subfloat[Correlação 2023.]{\label{a}\includegraphics[width=0.25\linewidth]{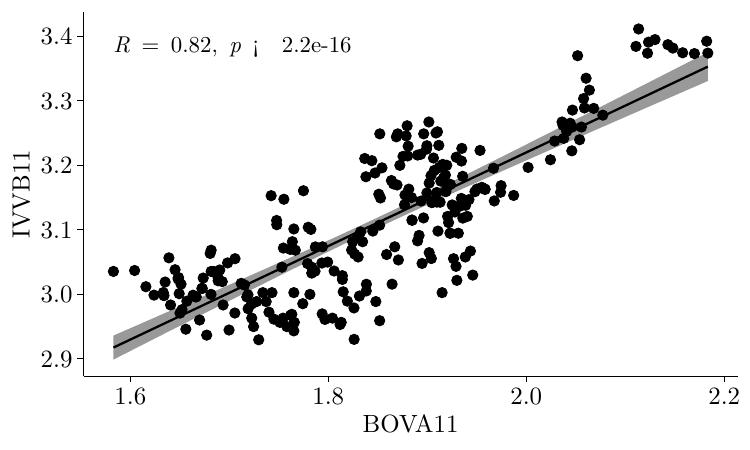}}

    \caption{\footnotesize Correlação entre  BOVA11 e IVVB11 de $01/2017$ à $12/2023$. }
    \label{Correlacao}
\end{figure}
Como mostrado na Tabela~\ref{Tab:Corr} e Figura~\ref{Correlacao}, a correlação entre os ativos varia ao longo do tempo. No entanto, é importante observar que, na maior parte do tempo, eles apresentam correlação baixa ou negativa, resultando em um baixo risco para o portfólio composto por esses dois ativos. Assim, para uma estratégia de longo prazo, consideramos que IVVB11 e BOVA11, de certo modo, também satisfazem o item~(ii) acima.
\begin{remark}
    Vale a pena destacar  que  os ativos  IVVB11 e  BOVA11 são ``Exchange Traded Fund (ETF) ou  fundo de índice. Um ETF é por definição  um ativo financeiro que tem como referência uma carteira teórica de algum índice do mercado. O IVVB11 é um EFT negociado na B3 que replica o índice  S$\&$P 500, que reúne as 500 maiores empresas em valor de mercado negociadas nos Estados Unidos.  Já o fundo de índice BOVA11  replica o índice Bovespa com as principais empresas negociadas no Brasil. Como  BOVA11 e  IVVB11 replicam  índices de  dois mercados distintos, isto é,  BOVA11 permite exposição ao mercado nacional enquanto o IVVB11 permite se expor ao mercado norte-americano, as seguintes características destes dois ativos combinados merecem destaques:
    \begin{itemize}
        \item  Os ativos BOVA11 e  IVVB11 combinados resulta em um portfólio altamente diversificado.  Assim,  investir  em uma combinação destes ativos aumenta a probabilidade do investidor  se  expor ao mercado com  menor  risco quando comparado com um portfólio menos diversificado;
        \item  Uma vez que o IVVB11 esta mais exposto ao dólar e o BOVA11 ao real, isto explica em parte o comportamento da correlação entre eles. De fato, quando o dólar se valoriza frente ao real  significa que a economia brasileira no período pode  não gozar de um bom desempenho comparada com a americana, acontecendo  o contrário  quando o dólar se desvaloriza. Esta oscilação cambial, em geral,   implica  na maioria das vezes  valorização de um dos  ativos e a  depreciação do outro.
    \end{itemize}
\end{remark}
% \begin{remark}
%   Em março de 2020 o BOVA11 sofreu perdas da ordem de 40\% enquanto que o IVVB11 teve perdas de menos de 20\% . Essa diferença causou um enorme distanciamento no retorno dos ativos, como mostra a Figura \ref{fig:raBOVA11IVVB11}. Se retirarmos o mês de março de 2020, ou seja desconsiderar o período da queda, veremos os retornos dos ativos bem próximos mostrando assim a  importância de se controlar o risco. Este fato pode ser visto na Figura \ref{fig:semQueda}.
% \end{remark}
% \begin{figure}[H]
%   \centering
%   \includegraphics{figuras/}\\
%   \caption{\footnotesize  Retorno acumulado desconsiderando o mês de março de 2020.}
%   \label{fig:semQueda}
% \end{figure}
Concluímos que,  para o investidor avesso ao risco, a combinação do BOVA11 com  IVVB11 em uma estratégia de investimento  é uma  alternativa interessante  a ser considerada.
%%%%%%%%%%%%%%%%%%%%%%%%%%%%%%%%%%%%%
\subsection{Descrição das  Estratégias Ingênua e  Mínimo Risco} \label{Sec:DescEstrategia-1}
%%%%%%%%%%%%%%%%%%%%%%%%%%%%%%%%%%%%
Após a justificativa para a escolha dos ativos BOVA11 e IVVB11, avançamos para a descrição da estratégia que combina esses dois ativos. É importante destacar que, na estratégia apresentada, não consideramos um ativo livre de risco, pois ela será obtida como uma aplicação do Corolário~\ref{cr:prm}. Dentre várias estratégias possíveis, com o objetivo de um investimento de longo prazo, optamos pela seguinte abordagem:

\begin{enumerate}
    \item[(AP)] Aumentar  o tamanho do portifólio   realizando {\it aportes periódicos (AP)}. Neste caso,  a cada período pré-definido,  comprar  o ativo que está mais distante da proporção especificada previamente pelo balanceamento desejado.
\end{enumerate}
Neste caso, há várias opções para os aportes periódicos, por exemplo:
\begin{enumerate}
    \item[(AI)] {\it Aporte Ingênuo}: Neste caso, os aportes seriam feitos  em um único  ativo de modo que ao final do aporte a proporção de cada ativo na carteia seja a mais próxima possível $50\%$; O que conseguimos aportando no ativo que tem a menor proporção na carteira no dia do aporte.
    \item[(AMR)] {\it Aporte de Mínimo Risco}:  Neste caso, os aportes seriam feitos  em um único  ativo de modo que ao final do aporte a proporção de cada ativo na carteia seja a mais próxima possível do vetor de alocação de capital dado pelo Corolário~\ref{cr:prm}.
\end{enumerate}

\begin{remark}
    É importante observar alguns  detalhes técnicos que  poderão ocorrer  na implementação da estratégia na prática\footnote{Isso não quer dizer que estamos indicando a implementação desta estratégia.}.
    \begin{itemize}
        \item Se o investidor optar por aportes constantes na estratégia,   como não é possível comprar   fração de um  ativo, então  certamente irão sobrar resíduos a cada aporte. Neste caso, seria interessante fixar um intervalo de variação  para os aportes.
        \item  Pode ocorrer concentração em algum dos ativos quando usado AMR. Neste caso, visando controlar o risco,  seria conveniente realizar um balanceamento do portifólio  em períodos previamente estabelecidos, usando a proporção de $50\%$ em cada ativo ou  usando vetor de alocação de capital dado pelo Corolário~\ref{cr:prm}. Por exemplo, a cada seis meses ou um ano.
    \end{itemize}
\end{remark}
A opções (AI) e  (AMR)  para os aportes  serão discutidas na próxima seção.

%%%%%%%%%%%%%%%%%%%%%%%%%%%%%%%%%%%%%
\subsubsection{Implementação da estratégia} \label{Sec:ImpEstrategia}

Para implementar a estratégia (AP) com aportes (AI) e (AMR), nosso programa armazena o primeiro ano como fonte de dados para a Estratégia Mínimo Rísco, e o processo inicia no primeiro mês do segundo ano. Com o objetivo de manter um padrão nos testes, todas as estratégias iniciam ao mesmo tempo. Conforme mencionado anteriormente, utilizamos os dados de 2017 e iniciamos os testes em 01/2018 até 12/2023, considerando os aportes da seguinte forma:

\begin{itemize}
    \item  Aporte inicial de R\$ $1.000,00$  no início  de  janeiro de 2018;
    \item  Aportes mensais de R\$ $400,00$ iniciando em fevereiro de 2018.
\end{itemize}
Antes de mostrar as estratégias em cada ano é importante saber como os aportes são realizados.
\begin{itemize}
    \item O {\it Aporte Ingênuo} de  R\$ $400,00$  é feito ao primeiro dia útil de cada mês no ativo que possui a menor proporção na carteira no último dia do mês anterior.
    \item {\it Aporte de Mínimo Risco} de  R\$ $400,00$ é feito no primeiro dia de cada mês de modo que ao final do aporte a proporção de cada ativo na carteia seja a mais próxima possível do vetor de alocação de capital dado pelo Corolário~\ref{cr:prm} utilizando os dados dos últimos  doze meses.
\end{itemize}
\begin{remark}
    Para tornar mais simples a nossa análise vamos supor que é possível comprar frações dos ativos BOVA11 e IVVB11.
\end{remark}
Vamos exemplificar abaixo como faremos o  Aporte de Mínimo Risco em 01/02/2018 (ou primeiro dia útil). Primeiramente, precisamos das porcentagens de BOVA11 e IVVB11 atuais na carteira em 31/01/2018, além de calcular um novo patrimônio para simular em qual ativo será mais benéfico para a estratégia realizar o aporte, como citado anteriormente. Para simplificar as notações, definimos:
\begin{itemize}
    \item QBOVA11 é a quantidade total em reais de BOVA11 na carteira;
    \item PBOVA11 é a porcentagem  total  de BOVA11 na carteira;
    \item QIVVB11 é a quantidade em reais de IVVB11 na carteira;
    \item PIVVB11 é a porcentagem  total  de IVVB11 na carteira;
    \item NPRAT é o patrimonio total mais aporte mensal.
\end{itemize}
{\it Ao longo desta seção vamos usar duas casas decimais na representação dos números.}
Assim, utilizando as notações  com  dados em 31/01/2018 temos os seguintes valores com duas casas decimais na representação dos números:
\begin{align*}
    \begin{aligned}
        \mbox{PBOVA11} & = \frac{\mbox{QBOVA11}}{\mbox{QBOVA11+ QIVVB11}}\times 100 = \frac{463,82}{1.052,64}\times 100  = 44,06  \\
        \mbox{PIVVB11} & = \frac{\mbox{QIVVB11}}{\mbox{QBOVA11+ QIVVB11l}}\times 100 = \frac{588,82}{1.052,64}\times 100  = 55,94 \\
    \end{aligned}
\end{align*}
Finalmente, para decidir em qual ativo faremos o  aporte de Mínimo Risco em 01/02/2018, precisamos ainda das percentagens sugeridas (PS) de pelo menos um dos ativos, as quais podem ser obtidas usando o Corolário~\ref{cr:prm} com dados dos últimos doze meses. Após isso, calcularemos como ficaria a porcentagem de determinado ativo com e sem o aporte; em seguida, sabendo como ficaria a porcentagem após o aporte, saberemos em qual aportar. Vamos usar as seguintes notações:
\begin{itemize}
    \item PS-IVVB11 é a porcentagem   sugerida na carteira   de IVVB11;
    \item DP-IVVB11A é a porcentual de diferença do percentual sugerido para o IVVB11 se o aporte for no IVVB11;
    \item DP-IVVB11 é a poercentual de diferença do percentual sugerido para IVVB11 se o aporte for realizado no BOVA11
\end{itemize}

Deste modo, usando as notações acima, temos que PS-BOVA11 = $41,81\%$ e PS-IVVB11 = $58,19\%$ são as porcentagens na carteira sugeridas pelo programa em 31/01/2018, as quais são obtidas usando o Corolário~\ref{cr:prm} com dados dos últimos doze meses. Assim, o  Aporte de Mínimo Risco será feito baseado no DP-IVVB11A e DP-IVVB11, ou seja, se o valor absoluto do DP-IVVB11A for o menor, significa que o aporte no ativo IVVB11 será o mais benéfico para a estratégia; caso contrário, o aporte deverá ser feito no BOVA11, o que pode ser identificado fazendo os seguintes cálculos:
\begin{align*}
    \text{DP-IVVB11A} & = \frac{\text{QIVVB11} + \text{Aporte Mensal}}{\text{NPRAT}} - \text{PS-IVVB11} = \frac{\text{588,82} + \text{400,00}}{\text{1452.64}} - \text{58,19} = 9,88 \\
    \text{DP-IVVB11}  & = \frac{\text{QIVVB11}}{\text{NPRAT}} - \text{PS-IVVB11} = \frac{\text{588,82}}{\text{1452.64}} - \text{58,19} = -17,65
\end{align*}
Como os cálculos acima mostram, DP-IVVB11A está a $9,88\%$ de distância do percentual sugerido e o valor absoluto de DP-IVVB11 está a $17,65\%$ de distância do percentual sugerido. Logo, de acordo com a estratégia, o aporte em 01/02/2018 será feito no IVVB11. Na Tabela~\ref{tb:AM2018} apresentamos todos os ativos escolhidos para os aportes de mínimo risco realizados em 2018.
\begin{table}[H]
    \begin{tiny}
        \begin{center}
            \begin{tabular}{cccccrrc}
                Data  & PS-BOVA11 & PS-IVVB11 & P-BOVA11 & P-IVVB11 & \multicolumn{2}{c}{Diferença percentual} & Ativo Escolhido          \\ \cline{6-7}
                      &           &           &          &          & DP-IVVBA11                               & DP-IVVB11       &        \\\hline
                1/18  & 42,48\%   & 57,52\%   & 42,48\%  & 57,52\%  &                                          &                 &        \\
                2/18  & 41,81\%   & 58,19\%   & 32,13\%  & 67,87\%  & 9,68\%                                   & 17,85\%         & IVVB11 \\
                3/18  & 40,94\%   & 59,06\%   & 47,39\%  & 52,61\%  & 15,42\%                                  & 6,45\%          & BOVA11 \\
                4/18  & 41,19\%   & 58,81\%   & 38,10\%  & 61,90\%  & 2,20\%                                   & 15,95\%         & IVVB11 \\
                5/18  & 42,78\%   & 57,22\%   & 46,08\%  & 53,92\%  & 11,35\%                                  & 3,30\%          & BOVA11 \\
                6/18  & 43,98\%   & 56,02\%   & 48,80\%  & 51,20\%  & 7,78\%                                   & 4,82\%          & BOVA11 \\
                7/18  & 37,50\%   & 62,50\%   & 41,24\%  & 58,76\%  & 3,74\%                                   & 15,04\%         & IVVB11 \\
                8/18  & 40,89\%   & 59,11\%   & 39,14\%  & 60,86\%  & 1,75\%                                   & 8,11\%          & IVVB11 \\
                9/18  & 39,96\%   & 60,04\%   & 40,14\%  & 59,86\%  & 8,16\%                                   & 0,18\%          & BOVA11 \\
                10/18 & 38,71\%   & 61,29\%   & 38,73\%  & 61,27\%  & 0,01\%                                   & 7,81\%          & IVVB11 \\
                11/18 & 40,31\%   & 59,69\%   & 41,78\%  & 58,22\%  & 1,46\%                                   & 8,95\%          & IVVB11 \\
                12/18 & 39,16\%   & 60,84\%   & 38,02\%  & 61,98\%  & 1,13\%                                   & 5,58\%          & IVVB11 \\
                \hline
            \end{tabular}
            \caption{\footnotesize Aportes de mínimo risco} \label{tb:AM2018}
        \end{center}
    \end{tiny}
\end{table}
Como já sabemos como funcionam os aportes em cada estratégia, vamos ver agora o desenvolvimento das Estratégias Ingênua e de Mínimo Risco. Nas seis tabelas a seguir, mostramos mensalmente a quantidade de ativos em cada estratégia e o patrimônio acumulado ao final de cada mês. {\it Enfatizamos que, devido aos dados serem obtidos via a nossa implementação, haverão pequenos erros de arredondamento nas tabelas}. Usaremos as seguintes notações na confecção das tabelas:
\begin{itemize}
    \item  C-BOVA11 é a cotação do ativo BOVA11 ao final de cada mês;
    \item  C-IVVB11 é a cotação do ativo  IVVB11 ao final de cada mês;
    \item  N-BOVA11 é a quantidade do  ativo BOVA11 na carteira  ao final de cada mês;
    \item  N-IVVB11 é a quantidade do  ativo  IVVB11 na carteira  ao final de cada mês;
    \item Patrimônio-I é a patrimônio acumulado ao final de cada mês usando a estratégia Ingênua;
    \item Patrimônio-M é a patrimônio acumulado ao final de cada mês usando a Estratégia  Mínimo Risco.
\end{itemize}

\begin{table}[H]
    \begin{tiny}
        \begin{center}
            \begin{tabular}{crrrrrrrr}
                Data  & C-BOVA11  & C-IVVB11   & \multicolumn{3}{c}{Ingênua} & \multicolumn{3}{c}{Mínimo risco}                                                      \\ \cline{4-6} \cline{7-9}
                      &           &            & N-BOVA11                    & N-IVVB11                         & Patrimônio-I & N-BOVA11 & N-IVVB11 & Patrimônio-M  \\\hline
                1/18  & R\$ 82,00 & R\$ 94,30  & 6,77                        & 5,34                             & R\$ 1.059,41 & 5,66     & 6,24     & R\$  1.052,64 \\
                2/18  & R\$ 82,50 & R\$ 92,60  & 6,77                        & 9,58                             & R\$ 1.446,50 & 5,66     & 10,50    & R\$  1.439,27 \\
                3/18  & R\$ 82,43 & R\$ 91,66  & 11,62                       & 9,58                             & R\$ 1.836,67 & 10,50    & 10,50    & R\$  1.828,57 \\
                4/18  & R\$ 83,18 & R\$ 97,60  & 11,62                       & 13,95                            & R\$ 2.328,26 & 10,50    & 14,94    & R\$  2.332,81 \\
                5/18  & R\$ 74,07 & R\$ 107,37 & 16,43                       & 13,95                            & R\$ 2.714,87 & 15,39    & 14,94    & R\$  2.745,78 \\
                6/18  & R\$ 70,35 & R\$ 111,00 & 21,83                       & 13,95                            & R\$ 3.084,30 & 20,75    & 14,94    & R\$  3.119,87 \\
                7/18  & R\$ 76,45 & R\$ 111,58 & 27,51                       & 13,95                            & R\$ 3.660,24 & 20,75    & 18,51    & R\$  3.652,44 \\
                8/18  & R\$ 73,74 & R\$ 127,15 & 27,51                       & 17,53                            & R\$ 4.234,96 & 20,75    & 21,93    & R\$  4.304,32 \\
                9/18  & R\$ 76,61 & R\$ 124,10 & 32,91                       & 17,53                            & R\$ 4.697,62 & 26,20    & 21,93    & R\$  4.748,75 \\
                10/18 & R\$ 84,08 & R\$ 106,95 & 32,91                       & 20,75                            & R\$ 4.987,48 & 26,20    & 25,15    & R\$  4.910,21 \\
                11/18 & R\$ 86,37 & R\$ 112,90 & 32,91                       & 24,49                            & R\$ 5.608,62 & 26,20    & 28,89    & R\$  5.542,12 \\
                12/18 & R\$ 84,60 & R\$ 102,70 & 32,91                       & 28,04                            & R\$ 5.664,34 & 26,20    & 32,57    & R\$  5.561,70 \\
                \hline
            \end{tabular}
            \caption{ \scriptsize Testes em 2018.}
        \end{center}
    \end{tiny}
\end{table}

\begin{table}[H]
    \begin{tiny}
        \begin{center}
            \begin{tabular}{crrccrccr}
                Data  & C-BOVA11   & C-IVVB11   & \multicolumn{3}{c}{Ingênua} & \multicolumn{3}{c}{Mínimo risco}                                                        \\ \cline{4-6} \cline{7-9}
                      &            &            & N-BOVA11                    & N- IVVB11                        & Patrimônio-I  & N-BOVA11 & N-IVVB11 & Patrimônio-M   \\\hline
                1/19  & R\$ 94,00  & R\$ 104,52 & 37,64                       & 28,04                            & R\$ 6.469,20  & 30,94    & 32,43    & R\$  6.297,95  \\
                2/19  & R\$ 91,85  & R\$ 111,25 & 37,64                       & 31,86                            & R\$ 7.002,75  & 30,94    & 36,26    & R\$  6.875,76  \\
                3/19  & R\$ 91,81  & R\$ 118,15 & 41,99                       & 31,86                            & R\$ 7.620,96  & 35,29    & 36,26    & R\$  7.524,09  \\
                4/19  & R\$ 92,88  & R\$ 122,85 & 41,99                       & 35,25                            & R\$ 8.231,59  & 39,65    & 36,26    & R\$  8.137,23  \\
                5/19  & R\$ 93,59  & R\$ 115,05 & 46,30                       & 35,25                            & R\$ 8.389,49  & 39,65    & 39,52    & R\$  8.257,62  \\
                6/19  & R\$ 97,11  & R\$ 119,70 & 46,30                       & 38,73                            & R\$ 9.132,57  & 39,65    & 43,00    & R\$  8.997,51  \\
                7/19  & R\$ 97,98  & R\$ 121,11 & 50,42                       & 38,73                            & R\$ 9.631,05  & 43,77    & 43,00    & R\$  9.496,31  \\
                8/19  & R\$ 97,61  & R\$ 129,08 & 50,42                       & 42,03                            & R\$ 10.347,40 & 43,77    & 46,30    & R\$  10.248,79 \\
                9/19  & R\$ 101,01 & R\$ 132,32 & 54,52                       & 42,03                            & R\$ 11.068,96 & 43,77    & 49,40    & R\$  10.957,81 \\
                10/19 & R\$ 103,23 & R\$ 130,00 & 58,48                       & 42,03                            & R\$ 11.501,27 & 47,73    & 49,40    & R\$  11.349,16 \\
                11/19 & R\$ 104,35 & R\$ 143,60 & 58,48                       & 45,11                            & R\$ 12.580,27 & 47,73    & 52,48    & R\$  12.516,75 \\
                12/19 & R\$ 111,23 & R\$ 138,71 & 62,31                       & 45,11                            & R\$ 13.188,39 & 51,56    & 52,48    & R\$  13.014,52 \\
                \hline
            \end{tabular}
            \caption{\scriptsize  Testes em 2019}
        \end{center}
    \end{tiny}
\end{table}

\begin{table}[H]
    \begin{tiny}
        \begin{center}
            \begin{tabular}{crrccrccr}
                Data  & C-BOVA11   & C-IVVB11   & \multicolumn{3}{c}{Ingênua} & \multicolumn{3}{c}{Mínimo risco}                                                        \\ \cline{4-6} \cline{7-9}
                      &            &            & N-BOVA11                    & N-IVVB11                         & Patrimônio-I  & N-BOVA11 & N-IVVB11 & Patrimônio-M   \\\hline
                1/20  & R\$ 108,90 & R\$ 148,10 & 62,31                       & 47,99                            & R\$ 13.893,87 & 51,22    & 55,59    & R\$  13.810,61 \\
                2/20  & R\$ 100,60 & R\$ 141,56 & 65,98                       & 47,99                            & R\$ 13.432,30 & 54,84    & 55,59    & R\$  13.386,35 \\
                3/20  & R\$ 69,35  & R\$ 144,62 & 69,96                       & 47,99                            & R\$ 11.792,83 & 54,84    & 58,28    & R\$  12.231,82 \\
                4/20  & R\$ 77,21  & R\$ 170,50 & 75,73                       & 47,99                            & R\$ 14.030,16 & 60,71    & 58,28    & R\$  14.624,15 \\
                5/20  & R\$ 84,15  & R\$ 175,65 & 80,91                       & 47,99                            & R\$ 15.238,86 & 60,71    & 60,64    & R\$  15.760,87 \\
                6/20  & R\$ 91,62  & R\$ 181,72 & 85,66                       & 47,99                            & R\$ 1.6570,10 & 60,71    & 62,90    & R\$  16.992,80 \\
                7/20  & R\$ 99,29  & R\$ 183,68 & 90,03                       & 47,99                            & R\$ 17.754,70 & 60,71    & 65,15    & R\$  17.994,52 \\
                8/20  & R\$ 95,70  & R\$ 207,15 & 90,03                       & 50,17                            & R\$ 19.009,03 & 60,71    & 67,27    & R\$  19.744,27 \\
                9/20  & R\$ 91,05  & R\$ 203,98 & 94,21                       & 50,17                            & R\$ 18.811,90 & 60,71    & 69,22    & R\$  19.646,74 \\
                10/20 & R\$ 90,66  & R\$ 202,60 & 98,60                       & 50,17                            & R\$ 19.104,21 & 60,71    & 71,15    & R\$  19.919,42 \\
                11/20 & R\$ 105,00 & R\$ 210,00 & 103,01                      & 50,17                            & R\$ 21.352,72 & 60,71    & 73,06    & R\$  21.716,53 \\
                12/20 & R\$ 114,65 & R\$ 210,04 & 103,01                      & 52,07                            & R\$ 22.748,90 & 60,71    & 74,99    & R\$  22.711,17 \\
                \hline
            \end{tabular}
            \caption{\footnotesize  Testes em 2020}
        \end{center}
    \end{tiny}
\end{table}

\begin{table}[H]
    \begin{tiny}
        \begin{center}
            \begin{tabular}{crrccrccr}
                Data  & C-BOVA11   & C-IVVB11   & \multicolumn{3}{c}{Ingênua} & \multicolumn{3}{c}{Mínimo risco}                                                        \\ \cline{4-6} \cline{7-9}
                      &            &            & N-BOVA11                    & N-IVVB11                         & Patrimônio-I  & N-BOVA11 & N-IVVB11 & Patrimônio-M   \\\hline
                1/21  & R\$ 110,56 & R\$ 220,00 & 103,01                      & 53,98                            & R\$ 23.265,22 & 60,71    & 76,89    & R\$  23.627,22 \\
                2/21  & R\$ 105,59 & R\$ 231,00 & 106,63                      & 53,98                            & R\$ 23.729,05 & 60,71    & 78,69    & R\$  24.588,79 \\
                3/21  & R\$ 112,02 & R\$ 243,20 & 110,42                      & 53,98                            & R\$ 25.497,63 & 60,71    & 80,37    & R\$  26.346,59 \\
                4/21  & R\$ 114,40 & R\$ 247,00 & 113,99                      & 53,98                            & R\$ 26.374,06 & 60,71    & 81,97    & R\$  27.192,64 \\
                5/21  & R\$ 121,25 & R\$ 238,00 & 117,48                      & 53,98                            & R\$ 27.093,03 & 64,20    & 81,97    & R\$  27.293,66 \\
                6/21  & R\$ 121,81 & R\$ 232,50 & 117,48                      & 55,66                            & R\$ 27.252,68 & 67,44    & 81,97    & R\$  27.273,82 \\
                7/21  & R\$ 117,11 & R\$ 250,35 & 117,48                      & 57,36                            & R\$ 28.124,76 & 70,75    & 81,97    & R\$  28.808,08 \\
                8/21  & R\$ 114,08 & R\$ 254,00 & 120,90                      & 57,36                            & R\$ 28.367,86 & 74,16    & 81,97    & R\$  29.280,93 \\
                9/21  & R\$ 106,49 & R\$ 255,00 & 124,41                      & 57,36                            & R\$ 27.880,97 & 77,64    & 81,97    & R\$  29.171,10 \\
                10/21 & R\$ 99,68  & R\$ 283,00 & 128,16                      & 57,36                            & R\$ 29.014,85 & 81,33    & 81,97    & R\$  31.305,15 \\
                11/21 & R\$ 98,35  & R\$ 280,55 & 132,17                      & 57,36                            & R\$ 29.098,47 & 85,27    & 81,97    & R\$  31.383,66 \\
                12/21 & R\$ 100,80 & R\$ 293,56 & 136,24                      & 57,36                            & R\$ 30.578,82 & 89,39    & 81,97    & R\$  33.074,32 \\
                \hline
            \end{tabular}
            \caption{\footnotesize  Testes em 2021}
        \end{center}
    \end{tiny}
\end{table}

\begin{table}[H]
    \begin{tiny}
        \begin{center}
            \begin{tabular}{crrccrccr}
                Data  & C-BOVA11   & C-IVVB11   & \multicolumn{3}{c}{Ingênua} & \multicolumn{3}{c}{Mínimo risco}                                                        \\ \cline{4-6} \cline{7-9}
                      &            &            & N-BOVA11                    & N-IVVB11                         & Patrimônio-I  & N-BOVA11 & N-IVVB11 & Patrimônio-M   \\\hline
                1/22  & R\$ 107,98 & R\$ 260,40 & 140,24                      & 57,36                            & R\$ 30.079,65 & 93,39    & 81,97    & R\$  31.430,10 \\
                2/22  & R\$ 108,90 & R\$ 245,50 & 140,24                      & 58,92                            & R\$ 29.732,08 & 97,05    & 81,97    & R\$  30.693,70 \\
                3/22  & R\$ 115,60 & R\$ 235,65 & 140,24                      & 60,55                            & R\$ 30.475,64 & 100,65   & 81,97    & R\$  30.952,73 \\
                4/22  & R\$ 103,40 & R\$ 224,10 & 140,24                      & 62,25                            & R\$ 28.446,56 & 104,07   & 81,97    & R\$  29.130,85 \\
                5/22  & R\$ 107,18 & R\$ 214,50 & 140,24                      & 64,03                            & R\$ 28.761,07 & 107,96   & 81,97    & R\$  29.154,65 \\
                6/22  & R\$ 95,00  & R\$ 216,70 & 140,24                      & 65,89                            & R\$ 27.596,83 & 111,68   & 81,97    & R\$  28.373,60 \\
                7/22  & R\$ 99,30  & R\$ 234,50 & 144,45                      & 65,89                            & R\$ 29.790,40 & 115,87   & 81,97    & R\$  30.728,32 \\
                8/22  & R\$ 105,83 & R\$ 226,00 & 148,45                      & 65,89                            & R\$ 30.600,25 & 119,92   & 81,97    & R\$  31.217,66 \\
                9/22  & R\$ 106,46 & R\$ 213,50 & 148,45                      & 67,66                            & R\$ 30.248,32 & 123,68   & 81,97    & R\$  30.668,06 \\
                10/22 & R\$ 112,15 & R\$ 220,24 & 148,45                      & 69,54                            & R\$ 31.960,91 & 127,24   & 81,97    & R\$  32.323,39 \\
                11/22 & R\$ 108,58 & R\$ 233,25 & 148,45                      & 71,35                            & R\$ 32.759,68 & 130,78   & 81,97    & R\$  33.320,64 \\
                12/22 & R\$ 105,95 & R\$ 225,00 & 152.14                      & 71,35                            & R\$ 32.170,60 & 134,52   & 81,97    & R\$  32.696,25 \\
                \hline
            \end{tabular}
            \caption{\footnotesize  Testes em 2022}
        \end{center}
    \end{tiny}
\end{table}

\begin{table}[H]
    \begin{tiny}
        \begin{center}
            \begin{tabular}{crrccrccr}
                Data  & C-BOVA11   & C-IVVB11   & \multicolumn{3}{c}{Ingênua} & \multicolumn{3}{c}{Mínimo risco}                                                        \\ \cline{4-6} \cline{7-9}
                      &            &            & N-BOVA11                    & N-IVVB11                         & Patrimônio-I  & N-BOVA11 & N-IVVB11 & Patrimônio-M   \\\hline
                1/23  & R\$ 108,7  & R\$ 228,2  & 152,16                      & 73,11                            & R\$ 33.396,95 & 138,42   & 81,97    & R\$  33.909,94 \\
                2/23  & R\$ 102.29 & R\$ 229.3  & 152,16                      & 74,86                            & R\$ 32.556,38 & 142,11   & 81,97    & R\$  33.170,53 \\
                3/23  & R\$ 100.47 & R\$ 229.75 & 156,11                      & 74,86                            & R\$ 32.574,36 & 146,06   & 81,97    & R\$  33.218,70 \\
                4/23  & R\$ 99.53  & R\$ 229.75 & 160,17                      & 74,86                            & R\$ 33.360,24 & 150,13   & 81,97    & R\$  33.981,28 \\
                5/23  & R\$ 105.38 & R\$ 233.25 & 164,13                      & 74,86                            & R\$ 34.668,48 & 154,19   & 81,97    & R\$  35.285,48 \\
                6/23  & R\$ 114.91 & R\$ 235.25 & 167,95                      & 74,86                            & R\$ 36.785,66 & 157,92   & 81,97    & R\$  37.314,02 \\
                7/23  & R\$ 116.52 & R\$ 239.82 & 167,95                      & 76,56                            & R\$ 38.170,32 & 161,37   & 81,97    & R\$  38.692,51 \\
                8/23  & R\$ 112.31 & R\$ 247.65 & 167,95                      & 78,23                            & R\$ 38.236,12 & 164,77   & 81,97    & R\$  38.806,50 \\
                9/23  & R\$ 113.15 & R\$ 238.79 & 171,51                      & 78,23                            & R\$ 38.086,89 & 168,28   & 81,97    & R\$  38.615,47 \\
                10/23 & R\$ 109.62 & R\$ 234,00 & 171,51                      & 79,91                            & R\$ 37.499,86 & 171,87   & 81,97    & R\$  38.022,08 \\
                11/23 & R\$ 123.57 & R\$ 249.7  & 171,51                      & 81,62                            & R\$ 41.574,00 & 171,87   & 83,69    & R\$  42.134,57 \\
                12/23 & R\$ 130.39 & R\$ 257.61 & 171,51                      & 83,22                            & R\$ 43.801,49 & 171,87   & 85,29    & R\$  44.382,68 \\
                \hline
            \end{tabular}
            \caption{\footnotesize  Testes em 2023}
        \end{center}
    \end{tiny}
\end{table}

As  tabelas acima mostram  que durante os dois primeiros anos a estratégia Ingênua se saiu ligeiramente melhor que a de Mínimo Risco e ficaram praticamente empatadas no  terceiro ano  e nos anos posteriores a estratégia de Mínimo Risco se saiu melhor. Considerando todo o período,  o patrimônio  final da estratégia de de mínimo risco ficou $1,32\%$ acima  do patrimônio  final da estratégia ingênua.
%%%%%%%%%%%%%%%%%%%%%%%%%%%%%%%%%%%%

\subsubsection{Cotização} \label{cotizacao}
Uma das maneiras mais populares de se avaliar o rendimento de um portfólio é o {\it sistema de cotas}, pois ele permite comparar o rendimento do portfólio com índices de referência e também com o rendimento de outros portfólios. O sistema de cotas, também chamado de {\it cotização} do portfólio analisa a variação do valor patrimonial diário do portfólio. Para obter o valor da cota define-se um valor inicial, geralmente igual a 1 e sua {\it rentabilidade acumulada} é obtida pela {\it multiplicação dos retornos diários} do portfólio. Note que o montante acumulado não entra no cálculo da cota, apenas o retorno. Outro ponto importante é que aportes feitos ao longo do tempo tampouco afetam o valor da cota pois a mesma só considera o rendimento. Vejamos um exemplo simples. Considere o seguinte:
\begin{enumerate}
    \item Faz-se um aporte inicial de R\$1.000,00;
    \item Após 1 mês o rendimento acumulado foi de 10\%;
    \item O investidor então faz um novo aporte de R\$300,00;
    \item No mês seguinte o rendimento foi de -5\%.
\end{enumerate}
Vamos calcular o valor da cota. Começamos com a cota valendo 1 e após 1 mês o capital é R\$ 1.100,00 e a cota 1,1. O investidor então faz um aporte de R\$300,00 o que altera o valor do capital para R\$ 1.400,00 mas não afeta o valor da cota. No mês seguinte houve uma queda de 5\% logo o valor da cota é de 1.045 e o valor do capital R\$ 1.330,00.

\begin{align*}
    \begin{aligned}
        \mbox{valor da cota}    & = (1+0.1)\times(1-0.05) = 1.045                    \\
        \mbox{valor do capital} & = [1.000\times(1+0.1) + 300]\times(1-0.05) = 1.330 \\
    \end{aligned}
\end{align*}
Note que o rendimento da cota é diferente do rendimento do capital.
\begin{align*}
    \begin{aligned}
        \mbox{rentabilidade da cota}    & = \frac{\mbox{valor final}}{\mbox{valor inicial}}-1=\frac{1.045}{1}-1 = 0.045            \\
        \mbox{rentabilidade do capital} & = \frac{\mbox{valor final}}{\mbox{valor aplicado}}-1 = \frac{1.330}{1000+300}-1 = 0.023. \\
    \end{aligned}
\end{align*}
Isso acontece porque o aporte afeta o rendimento do capital, mas desconsidera o valor do dinheiro no tempo. O sistema de cotas, entretanto, não é afetado pelos aportes. Se tivéssemos apenas o fluxo do dinheiro ainda assim seríamos capazes de calcular o valor da cota. O fluxo do dinheiro foi o seguinte
\begin{enumerate}
    \item Depósito inicial de R\$1.000,00;
    \item Saldo no final do mês de R\$1.100,00;
    \item Depósito no valor de R\$300,00;
    \item Saldo no fim do mês de R\$1.330,00;
\end{enumerate}
Como o saldo no final do primeiro mês foi de R\$1.100,00 o rendimento do período foi de 0,1 ou 10\% e a cota agora vale 1.1. Realizando o depósito, o saldo atual fica em R\$1400,00. Após o segundo mês o saldo é de R\$ 1330,00 onde podemos calcular o retorno $1330/1440 -1=-0.05$, atualizamos agora o valor da cota para $ (1+0.1)\times(1-0.05) = 1.045$.

Vamos considerar agora a movimentação de um portfólio que possui um único ativo, ações da Petrobras-SA, cujo código de negociação na B3 é PETR4. A rentabilidade do portfólio é, neste caso, a rentabilidade da ação, o valor da cota é a rentabilidade acumulada e o capital é medido em número de cotas.

\begin{table}[H]
    \begin{tiny}
        \begin{center}
            \begin{tabular}{cccrccr}
                Data    & PETR4     & \multicolumn{1}{c}{Retorno} & \multicolumn{1}{c}{Movimentação} & Valor da Cota & Qtde de Cotas & \multicolumn{1}{c}{Capital} \\\hline
                01/2/22 & R\$ 33,00 &                             & R\$ 1.000,00                     & 1,0000        & 1000,00       & R\$ 1.000,00                \\
                02/2/22 & R\$ 32,52 & -0,0145                     &                                  & 0,9855        & 1000,00       & R\$ 985,45                  \\
                03/2/22 & R\$ 32,07 & -0,0138                     &                                  & 0,9718        & 1000,00       & R\$ 971,82                  \\
                04/2/22 & R\$ 32,63 & 0,0175                      & R\$ 200,00                       & 0,9888        & 1202,27       & R\$ 1.188,79                \\
                07/2/22 & R\$ 32,15 & -0,0147                     &                                  & 0,9742        & 1202,27       & R\$ 1.171,30                \\
                08/2/22 & R\$ 31,83 & -0,0100                     &                                  & 0,9645        & 1202,27       & R\$ 1.159,64                \\
                09/2/22 & R\$ 31,95 & 0,0038                      &                                  & 0,9682        & 1202,27       & R\$ 1.164,01                \\
                10/2/22 & R\$ 32,44 & 0,0153                      & R\$ 500,00                       & 0,9830        & 1710,90       & R\$ 1.681,87                \\
                11/2/22 & R\$ 33,76 & 0,0407                      &                                  & 1,0230        & 1710,90       & R\$ 1.750,30                \\
                14/2/22 & R\$ 33,00 & -0,0225                     &                                  & 1,0000        & 1710,90       & R\$ 1.710,90                \\
                15/2/22 & R\$ 32,48 & -0,0158                     &                                  & 0,9842        & 1710,90       & R\$ 1.683,94                \\
                16/2/22 & R\$ 32,93 & 0,0139                      & R\$ 100,00                       & 0,9979        & 1811,11       & R\$ 1.807,27                \\
                17/2/22 & R\$ 32,80 & -0,0039                     & R\$ 100,00                       & 0,9939        & 1911,72       & R\$ 1.900,14                \\
                18/2/22 & R\$ 33,00 & 0,0061                      & \textcolor{purple}{-R\$ 700,00}  & 1,0000        & 1211,72       & R\$ 1.211,72                \\
                21/2/22 & R\$ 33,85 & 0,0258                      &                                  & 1,0258        & 1211,72       & R\$ 1.242,93                \\
                22/2/22 & R\$ 33,74 & -0,0032                     &                                  & 1,0224        & 1211,72       & R\$ 1.238,89                \\
                23/2/22 & R\$ 34,22 & 0,0142                      &                                  & 1,0370        & 1211,72       & R\$ 1.256,52                \\
                24/2/22 & R\$ 33,39 & -0,0243                     &                                  & 1,0118        & 1211,72       & R\$ 1.226,04                \\
                25/2/22 & R\$ 34,00 & 0,0183                      &                                  & 1,0303        & 1211,72       & R\$ 1.248,44                \\\hline
            \end{tabular}
            \caption{\footnotesize Exemplo de Cotização}
        \end{center}
    \end{tiny}
\end{table}

Note as movimentações (depósitos e retiradas) afetam a quantidade de cotas e consequentemente o patrimônio, mas não afetam o valor da cota em si, pois não altera a rentabilidade.
%%%%%%%%%%%%%%%%%%%%%%%%%%%%%%%%%%%%%%%%%%%%%%%%%%%%%%%%%%%%%%%%%%%%%%%%
\subsection{Descrição  da Estratégia Portfólio Eficiente} \label{Sec:DescEstrategia-3.2}
%%%%%%%%%%%%%%%%%%%%%%%%%%%%%%%%%%%%
Na estratégia proposta nesta seção, analisamos os ativos BOVA11 e IVVB11, juntamente com um ativo considerado livre de risco. Embora tenhamos baseado o desenvolvimento da nossa teoria no ativo livre de risco, culminando no Corolário~\ref{cr:opt}, a tentativa inicial de formular uma estratégia consistia em investir exclusivamente em ativos de risco, seguindo o vetor de alocação proposto pelo corolário mencionado. Contudo, essa abordagem revela-se puramente teórica, uma vez que a presença de coordenadas negativas no vetor de alocação implica na adoção de "short selling". Tal prática, além de inviável para o investidor individual devido a limitações regulatórias e ao elevado aumento de risco associado, mostra-se impraticável na realidade. Diante dessas limitações, a estratégia   utilizada  se desdobra da seguinte forma:
\begin{itemize}
    \item[(AF)] {\it Aporte Renda Fixa}: Se o vetor de alocação sugerido ou percentagens sugeridas (PS) pela equação $x(r^*)$ \ref{eq:opttau} contiver alguma coordenada negativa, o aporte é  integralmente direcionado para renda fixa.
    \item[(AV)] {\it Aporte Renda Variável}: Se, por outro lado, o vetor de alocação apresentar todas as coordenadas positivas, então o capital total — abrangendo novos aportes e o acumulado em renda fixa — deve ser distribuído segundo as proporções indicadas pelo vetor de alocação $x(r^*)$ \ref{eq:opttau}.
\end{itemize}
Essa estratégia será chamada de {\it Estratégia do Portfólio Eficiente}. Para uma compreensão mais clara dos métodos de aporte previamente descritos, temos uma sequência de tabelas a seguir que representam a evolução  do capital durante o tempo investido nessa estratégia. É importante notar que existem duas abordagens principais para a distribuição de aportes: o {\it aporte em renda fixa} e o {\it aporte em renda variável}. No aporte em renda fixa, os recursos são integralmente destinados ao CDI, considerando-o como um ativo de reserva livre de risco. Por outro lado, o aporte em renda variável consiste em realocar o valor total disponível — incluindo os montantes anteriormente investidos no CDI — para o BOVA11 e o IVVB11, seguindo as proporções recomendadas pela estratégia de investimento em questão. Esta estruturação facilita o entendimento dos diferentes impactos e execuções das estratégias de aporte.
Para facilitar a compreensão dessa estratégia seguem as novas notações que serão utilizadas a seguir:
\begin{itemize}
    \item S-BOVA11:= Valor investido atualmente em BOVA11;
    \item S-IVVB11:= Valor investido atualmente em IVVB11;
    \item S-CDI:= Valor investido atualmente no CDI;
    \item S-TOTAL:= Valor total do patrimonio
\end{itemize}

\begin{table}[H]
    \begin{tiny}
        \begin{center}
            \begin{tabular}{cccccccc}
                Data  & PS-BOVA11 & PS-IVVB11 & S-BOVA11     & S-IVVB11     & S-CDI      & S-Total      & Modo de Distribuição  \\\hline
                1/18  & $45,09\%$ & $54,91\%$ & R\$ 492,00   & R\$ 471,50   & R\$ 89,29  & R\$ 1.052,79 & Aporte Renda Variável \\
                2/18  & $51,13\%$ & $48,87\%$ & R\$ 742,50   & R\$ 648,20   & R\$ 54,10  & R\$ 1.444,80 & Aporte Renda Variável \\
                3/18  & $41,97\%$ & $58,03\%$ & R\$ 741,87   & R\$ 1.008,26 & R\$ 88,18  & R\$ 1.838,31 & Aporte Renda Variável \\
                4/18  & $49,28\%$ & $50,72\%$ & R\$ 1.081,34 & R\$ 1.171,20 & R\$ 71,21  & R\$ 2.323,75 & Aporte Renda Variável \\
                5/18  & $53,11\%$ & $46,89\%$ & R\$ 1.259,19 & R\$ 1.288,44 & R\$ 145,10 & R\$ 2.692,73 & Aporte Renda Variável \\
                6/18  & $48,19\%$ & $51,81\%$ & R\$ 1.407,00 & R\$ 1.554,00 & R\$ 104,41 & R\$ 3.065,41 & Aporte Renda Variável \\
                7/18  & $31,49\%$ & $68,51\%$ & R\$ 1.146,75 & R\$ 2.343,18 & R\$ 69,84  & R\$ 3.559,77 & Aporte Renda Variável \\
                8/18  & $24,0\%$  & $76,0\%$  & R\$ 889,44   & R\$ 3.255,20 & R\$ 141,29 & R\$ 4.285,93 & Aporte Renda Variável \\
                9/18  & $29,51\%$ & $70,49\%$ & R\$ 1.455,59 & R\$ 3.226,60 & R\$ 27,09  & R\$ 4.709,28 & Aporte Renda Variável \\
                10/18 & $12,21\%$ & $87,79\%$ & R\$ 672,64   & R\$ 3.850,20 & R\$ 19,40  & R\$ 4.542,24 & Aporte Renda Variável \\
                11/18 & $13,26\%$ & $86,74\%$ & R\$ 604,59   & R\$ 4.516,00 & R\$ 76,25  & R\$ 5.196,84 & Aporte Renda Variável \\
                12/18 & $36,84\%$ & $63,16\%$ & R\$ 1.945,80 & R\$ 3.183,70 & R\$ 113,71 & R\$ 5.243,21 & Aporte Renda Variável \\
                \hline
            \end{tabular}
            \caption{\footnotesize Testes da Estratégia do Portfólio Eficiente em 2018} \label{tb:ET22018}
        \end{center}
    \end{tiny}
\end{table}

\begin{table}[H]
    \begin{tiny}
        \begin{center}
            \begin{tabular}{cccccccc}
                Data  & PS-BOVA11  & PS-IVVB11  & S-BOVA11     & S-IVVB11     & S-CDI      & S-Total       & Modo de Distribuição  \\\hline
                1/19  & $39,74\%$  & $60,26\%$  & R\$ 2.350,00 & R\$ 3.449,16 & R\$ 137,61 & R\$ 5.936,77  & Aporte Renda Variável \\
                2/19  & $63,63\%$  & $36,37\%$  & R\$ 3.857,70 & R\$ 2.447,50 & R\$ 87,85  & R\$ 6.393,05  & Aporte Renda Variável \\
                3/19  & $57,65\%$  & $42,35\%$  & R\$ 3.856,02 & R\$ 2.953,75 & R\$ 151,20 & R\$ 6.960,97  & Aporte Renda Variável \\
                4/19  & $35,31\%$  & $64,69\%$  & R\$ 2.600,64 & R\$ 4.914,00 & R\$ 71,81  & R\$ 7.586,45  & Aporte Renda Variável \\
                5/19  & $26,90\%$  & $73,10\%$  & R\$ 2.152,57 & R\$ 5.407,35 & R\$ 73,79  & R\$ 7.633,71  & Aporte Renda Variável \\
                6/19  & $26,94\%$  & $73,06\%$  & R\$ 2.136,42 & R\$ 6.104,70 & R\$ 113,50 & R\$ 8.354,62  & Aporte Renda Variável \\
                7/19  & $68,70\%$  & $31,30\%$  & R\$ 6.074,76 & R\$ 2.664,42 & R\$ 143,28 & R\$ 8.882,46  & Aporte Renda Variável \\
                8/19  & $71,93\%$  & $28,07\%$  & R\$ 6.637,48 & R\$ 2.710,68 & R\$ 75,16  & R\$ 9.423,32  & Aporte Renda Variável \\
                9/19  & $66,16\%$  & $33,84\%$  & R\$ 6.666,66 & R\$ 3.308,00 & R\$ 151,19 & R\$ 10.125,85 & Aporte Renda Variável \\
                10/19 & $110,82\%$ & $-10,82\%$ & R\$ 6.813,18 & R\$ 3.250,00 & R\$ 553,84 & R\$ 10.617,02 & Aporte Renda Fixa     \\
                11/19 & $124,70\%$ & $-24,70\%$ & R\$ 6.887,10 & R\$ 3.590,00 & R\$ 957,28 & R\$ 11.434,38 & Aporte Renda Fixa     \\
                12/19 & $45,77\%$  & $54,23\%$  & R\$ 5.672,73 & R\$ 6.241,95 & R\$ 109,99 & R\$ 12.024,67 & Aporte Renda Variável \\
                \hline
            \end{tabular}
            \caption{\footnotesize Testes da Estratégia do Portfólio Eficiente em 2019} \label{tb:ET22019}
        \end{center}
    \end{tiny}
\end{table}

\begin{table}[H]
    \begin{tiny}
        \begin{center}
            \begin{tabular}{cccccccc}
                Data  & PS-BOVA11   & PS-IVVB11  & S-BOVA11     & S-IVVB11      & S-CDI        & S-Total       & Modo de Distribuição  \\\hline
                1/20  & $35,65\%$   & $64,35\%$  & R\$ 4.247,10 & R\$ 8.441,70  & R\$ 194,35   & R\$ 12.883,15 & Aporte Renda Variável \\
                2/20  & $33,68\%$   & $66,32\%$  & R\$ 4.024,00 & R\$ 8.352,04  & R\$ 188,57   & R\$ 12.564,61 & Aporte Renda Variável \\
                3/20  & $20,16\%$   & $79,84\%$  & R\$ 1.803,10 & R\$ 10.412,64 & R\$ 98,61    & R\$ 12.314,35 & Aporte Renda Variável \\
                4/20  & $-6,09\%$   & $106,09\%$ & R\$ 2.007,46 & R\$ 12.276,00 & R\$ 500,03   & R\$ 14.783,49 & Aporte Renda Fixa     \\
                5/20  & $-275,66\%$ & $375,66\%$ & R\$ 2.187,90 & R\$ 12.646,80 & R\$ 902,15   & R\$ 15.736,85 & Aporte Renda Fixa     \\
                6/20  & $-143,80\%$ & $243,80\%$ & R\$ 2.382,12 & R\$ 13.083,84 & R\$ 1.304,92 & R\$ 16.770,88 & Aporte Renda Fixa     \\
                7/20  & $-101,22\%$ & $201,22\%$ & R\$ 2.581,54 & R\$ 13.224,96 & R\$ 1.708,08 & R\$ 17.514,58 & Aporte Renda Fixa     \\
                8/20  & $-81,01\%$  & $181,01\%$ & R\$ 2.488,20 & R\$ 14.914,80 & R\$ 2.111,45 & R\$ 19.514,45 & Aporte Renda Fixa     \\
                9/20  & $-66,97\%$  & $166,97\%$ & R\$ 2.367,30 & R\$ 14.686,56 & R\$ 2.515,40 & R\$ 19.569,26 & Aporte Renda Fixa     \\
                10/20 & $-58,48\%$  & $158,48\%$ & R\$ 2.357,16 & R\$ 14.587,20 & R\$ 2.919,97 & R\$ 19.864,33 & Aporte Renda Fixa     \\
                11/20 & $-72,43\%$  & $172,43\%$ & R\$ 2.730,00 & R\$ 15.120,00 & R\$ 3.324,94 & R\$ 21.174,94 & Aporte Renda Fixa     \\
                12/20 & $-91,45\%$  & $191,45\%$ & R\$ 2.980,90 & R\$ 15.122,88 & R\$ 3.730,50 & R\$ 21.834,28 & Aporte Renda Fixa     \\
                \hline
            \end{tabular}
            \caption{\footnotesize Testes da Estratégia do Portfólio Eficiente em 2020} \label{tb:ET22020}
        \end{center}
    \end{tiny}
\end{table}

\begin{table}[H]
    \begin{tiny}
        \begin{center}
            \begin{tabular}{cccccccc}
                Data  & PS-BOVA11  & PS-IVVB11  & S-BOVA11      & S-IVVB11      & S-CDI        & S-Total       & Modo de Distribuição  \\\hline
                1/21  & $-52,97\%$ & $152,97\%$ & R\$ 2.874,56  & R\$ 15.840,00 & R\$ 4.136,37 & R\$ 22.850,93 & Aporte Renda Fixa     \\
                2/21  & $-46,25\%$ & $146,25\%$ & R\$ 2.745,34  & R\$ 16.632,00 & R\$ 4.542,47 & R\$ 23.919,81 & Aporte Renda Fixa     \\
                3/21  & $-51,55\%$ & $151,55\%$ & R\$ 2.912,52  & R\$ 17.510,40 & R\$ 4.952,41 & R\$ 25.375,33 & Aporte Renda Fixa     \\
                4/21  & $-38,48\%$ & $138,48\%$ & R\$ 2.974,40  & R\$ 17.784,00 & R\$ 5.363,53 & R\$ 26.121,93 & Aporte Renda Fixa     \\
                5/21  & $41,33\%$  & $58,67\%$  & R\$ 11.518,75 & R\$ 14.756,00 & R\$ 325,95   & R\$ 26.600,70 & Aporte Renda Variável \\
                6/21  & $49,04\%$  & $50,96\%$  & R\$ 13.033,67 & R\$ 13.485,00 & R\$ 184,60   & R\$ 26.703,27 & Aporte Renda Variável \\
                7/21  & $54,95\%$  & $45,05\%$  & R\$ 14.404,53 & R\$ 12.767,85 & R\$ 307,39   & R\$ 27.479,77 & Aporte Renda Variável \\
                8/21  & $47,52\%$  & $52,48\%$  & R\$ 12.776,96 & R\$ 14.732,00 & R\$ 259,14   & R\$ 27.768,10 & Aporte Renda Variável \\
                9/21  & $36,26\%$  & $63,74\%$  & R\$ 9.477,61  & R\$ 17.850,00 & R\$ 222,69   & R\$ 27.550,30 & Aporte Renda Variável \\
                10/21 & $37,15\%$  & $62,85\%$  & R\$ 9.569,28  & R\$ 19.527,00 & R\$ 117,89   & R\$ 29.214,17 & Aporte Renda Variável \\
                11/21 & $33,62\%$  & $66,38\%$  & R\$ 9.638,30  & R\$ 19.357,95 & R\$ 316,73   & R\$ 29.313,00 & Aporte Renda Variável \\
                12/21 & $19,58\%$  & $80,42\%$  & R\$ 5.947,20  & R\$ 24.659,04 & R\$ 320,56   & R\$ 30.926,80 & Aporte Renda Variável \\
                \hline
            \end{tabular}
            \caption{\footnotesize Testes da Estratégia do Portfólio Eficiente em 2021} \label{tb:ET22021}
        \end{center}
    \end{tiny}
\end{table}

\begin{table}[H]
    \begin{tiny}
        \begin{center}
            \begin{tabular}{cccccccc}
                Data  & PS-BOVA11  & PS-IVVB11  & S-BOVA11      & S-IVVB11      & S-CDI        & S-Total       & Modo de Distribuição  \\\hline
                1/22  & $-19,81\%$ & $119,81\%$ & R\$ 6.370,82  & R\$ 21.873,60 & R\$ 725,83   & R\$ 28.970,25 & Aporte Renda Fixa     \\
                2/22  & $-41,26\%$ & $141,26\%$ & R\$ 6.425,10  & R\$ 20.622,00 & R\$ 1.134,33 & R\$ 28.181,43 & Aporte Renda Fixa     \\
                3/22  & $-20,95\%$ & $120,95\%$ & R\$ 6.820,40  & R\$ 19.794,60 & R\$ 1.548,56 & R\$ 28.163,56 & Aporte Renda Fixa     \\
                4/22  & $11,55\%$  & $88,45\%$  & R\$ 2.791,80  & R\$ 24.202,80 & R\$ 132,97   & R\$ 27.127,57 & Aporte Renda Variável \\
                5/22  & $17,15\%$  & $82,85\%$  & R\$ 5.037,46  & R\$ 21.664,50 & R\$ 90,96    & R\$ 26.792,92 & Aporte Renda Variável \\
                6/22  & $48,08\%$  & $51,92\%$  & R\$ 11.590,00 & R\$ 14.085,50 & R\$ 217,95   & R\$ 25.893,45 & Aporte Renda Variável \\
                7/22  & $53,63\%$  & $46,37\%$  & R\$ 14.994,30 & R\$ 12.897,50 & R\$ 115,97   & R\$ 28.007,77 & Aporte Renda Variável \\
                8/22  & $24,52\%$  & $75,48\%$  & R\$ 22.859,28 & R\$ 6.554,00  & R\$ 194,58   & R\$ 29.607,86 & Aporte Renda Variável \\
                9/22  & $28,43\%$  & $71,57\%$  & R\$ 21.611,38 & R\$ 7.899,50  & R\$ 155,19   & R\$ 29.666,07 & Aporte Renda Variável \\
                10/22 & $41,76\%$  & $58,24\%$  & R\$ 12.897,25 & R\$ 18.940,64 & R\$ 200,64   & R\$ 32.038,53 & Aporte Renda Variável \\
                11/22 & $19,28\%$  & $80,72\%$  & R\$ 5.971,90  & R\$ 27.756,75 & R\$ 198,80   & R\$ 33.927,45 & Aporte Renda Variável \\
                12/22 & $-26,19\%$ & $126,19\%$ & R\$ 5.827,25  & R\$ 26.775,00 & R\$ 605,22   & R\$ 33.207,47 & Aporte Renda Fixa     \\
                \hline
            \end{tabular}
            \caption{\footnotesize Testes da Estratégia do Portfólio Eficiente em 2022} \label{tb:ET22022}
        \end{center}
    \end{tiny}
\end{table}

\begin{table}[H]
    \begin{tiny}
        \begin{center}
            \begin{tabular}{cccccccc}
                Data  & PS-BOVA11  & PS-IVVB11  & S-BOVA11      & S-IVVB11      & S-CDI        & S-Total       & Modo de Distribuição  \\\hline
                1/23  & $-5,01\%$  & $105,01\%$ & R\$ 6.041,20  & R\$ 27.155,80 & R\$ 1.016,51 & R\$ 34.213,51 & Aporte Renda Fixa     \\
                2/23  & $26,67\%$  & $73,33\%$  & R\$ 8.597,75  & R\$ 25.223,00 & R\$ 237,18   & R\$ 34.057,93 & Aporte Renda Variável \\
                3/23  & $35,68\%$  & $64,32\%$  & R\$ 11.818,80 & R\$ 22.056,00 & R\$ 270,14   & R\$ 34.144,94 & Aporte Renda Variável \\
                4/23  & $63,86\%$  & $36,14\%$  & R\$ 22.601,60 & R\$ 12.406,50 & R\$ 137,73   & R\$ 35.145,83 & Aporte Renda Variável \\
                5/23  & $64,66\%$  & $35,34\%$  & R\$ 24.008,36 & R\$ 12.362,25 & R\$ 278,04   & R\$ 36.648,65 & Aporte Renda Variável \\
                6/23  & $58,10\%$  & $41,90\%$  & R\$ 23.176,51 & R\$ 15.761,75 & R\$ 199,90   & R\$ 39.138,16 & Aporte Renda Variável \\
                7/23  & $74,12\%$  & $25,88\%$  & R\$ 30.077,25 & R\$ 10.312,26 & R\$ 261,42   & R\$ 40.650,93 & Aporte Renda Variável \\
                8/23  & $186,89\%$ & $-86,89\%$ & R\$ 28.639,05 & R\$ 10.648,95 & R\$ 668,94   & R\$ 39.956,94 & Aporte Renda Fixa     \\
                9/23  & $-57,27\%$ & $157,27\%$ & R\$ 28.853,25 & R\$ 10.267,97 & R\$ 1.079,34 & R\$ 40.200,56 & Aporte Renda Fixa     \\
                10/23 & $39,82\%$  & $60,18\%$  & R\$ 23.787,54 & R\$ 15.444,00 & R\$ 179,85   & R\$ 39.411,39 & Aporte Renda Variável \\
                11/23 & $98,93\%$  & $1,07\%$   & R\$ 43.990,92 & R\$ 249,70    & R\$ 246,33   & R\$ 44.486,95 & Aporte Renda Variável \\
                12/23 & $62,77\%$  & $37,23\%$  & R\$ 29.598,53 & R\$ 17.259,87 & R\$ 232,48   & R\$ 47.090,88 & Aporte Renda Variável \\
                \hline
            \end{tabular}
            \caption{\footnotesize Testes da Estratégia do Portfólio Eficiente em 2023} \label{tb:ET22023}
        \end{center}
    \end{tiny}
\end{table}

Esta estratégia procura equilibrar a segurança proporcionada pelo investimento em renda fixa com a potencial otimização dos retornos, aproveitando as condições favoráveis do mercado conforme sugerido pelo vetor de alocação. Analisando a sequência de tabelas anteriores, podemos ver claramente que em momentos de grande volatilidade do mercado a Estratégia do Portfólio Eficiente diminuiu seu risco usando o CDI como meio de investir durante as instabilidades e, em seguida, redirecionou todo o capital para BOVA11 e IVVB11, conseguindo um resultado muito significativo e que supera em muito um investimento puramente no CDI.
%%%%%%%%%%%%%%%%%%%%%%%%%%%%%%%%%%%%%
\subsection{Descrição  da  Estratégia Conservadora} \label{Sec:DescEstrategia-3.0}
%%%%%%%%%%%%%%%%%%%%%%%%%%%%%%%%%%%%
Na estratégia apresentada nesta seção, levamos em consideração os ativos BOVA11 e IVVB11, bem como um ativo livre de risco.  Para isso, na estratégia apresentada nesta seção, consideramos os ativos BOVA11 e IVVB11, juntamente com um ativo livre de risco, o qual tomamos como sendo o CDI.  Inicialmente, a Estratégia  Mínimo Risco é usada para encontrar o portfólio de menor risco, enquanto a Estratégia do Portfólio Eficiente como o nome sugere busca encontrar o portfólio eficiente. Em seguida, combinamos essas duas estratégias, alocando parte do capital no CDI, resultando em uma estratégia que visa um portfólio com o mesmo risco do portfólio obtido na Estratégia  Mínimo Risco e com a eficiência do portfólio obtido na Estratégia do Portfólio Eficiente. Agora, passaremos à descrição detalhada da estratégia a ser utilizada. Seja ${\hat x}$ o vetor de alocação de capital para  que será considerado na definição da  {\it Estratégia Conservadora}, o qual é  dado  por:
\begin{equation} \label{eq:vac3 }
    {\hat x}:=(x, x_{f})\in  {\mathbb R}^{2}\times  {\mathbb R}
\end{equation}
onde $x=(x_1, x_2) \in {\mathbb R}^{2}$ é o vetor de alocação de capital nos ativos BOVA11 e IVVB11,  e $x_{f}$ é a quantidade alocada no CDI. Então a  Estratégia Conservadora  será definida considerando o seguinte vetor de alocação de capital:
\begin{equation}   \label{eq:vac3e}
    x=\frac{\sigma_{min}}{\sigma_*}x^*, \qquad \quad x_{f}=1-\frac{\sigma_{min}}{\sigma_*}(x^*_1+x^*_2)=1-\frac{\sigma_{min}}{\sigma_*}.
\end{equation}
onde  $x^*$ é   vetor de alocação e $\sigma_*$ é o risco  do portfólio  eficiente dado pela Proposição~\ref{eq:ep}. Aqui, $\sigma_{min}$ é o risco mínimo alcançado na Estratégia  Mínimo Risco. A equação \eqref{eq:vac3e} indica como distribuir os recursos entre os ativos de acordo com seus riscos para alcançar a meta da Estratégia Conservadora. Note que, combinando o Lema~\ref{le:fxmc} com as equações \eqref{eq:sdfr3} e \eqref{eq:vac3e}, concluímos que
$$
    {\hat \sigma} = \sigma_{min},
$$
ou seja, o portfólio $P_{\hat x}$ tem o mesmo risco do portfólio da Estratégia  Mínimo Risco. Isso evidencia o sucesso da estratégia em alcançar o objetivo desejado.

Avançamos para descrever detalhadamente  como serão feitos os aportes   na estratégia que combina esses três ativos. Com o objetivo de um investimento de longo prazo, optamos pela seguinte abordagem:

\begin{enumerate}
    \item[(AP)] {\it Aportes Periódicos}: Aumentar o tamanho do portfólio por meio de aportes periódicos de acordo com \eqref{eq:vac3e}.
\end{enumerate}
Neste caso,  os aportes periódicos são feitos da seguinte forma:

\begin{itemize}
    \item[(AF)] {\it Aportes Renda Fixa}: Se o vetor de alocação sugerido ou percentagens sugeridas (PS) pela Proposição~\ref{eq:opttau}  contiver alguma coordenada negativa, o aporte é  integralmente direcionado para renda fixa.
    \item[(RT)] {\it Redistribuição Total}: Se, por outro lado, o vetor de alocação apresentar todas as coordenadas positivas, então o capital total juntamente com novos aportes deve ser distribuído segundo \ref{eq:vac3e}.
\end{itemize}

\begin{remark}
    É importante observar alguns detalhes técnicos que podem ocorrer na implementação prática da estratégia\footnote{Ressaltamos que não estamos necessariamente indicando a implementação desta estratégia. São apenas considerações técnicas}:

    \begin{itemize}
        \item Como não é possível comprar frações de um ativo de risco, as sobras de recursos a cada aporte  devem ser direcionadas ao CDI.
    \end{itemize}
\end{remark}

As tabelas a seguir ilustram a evolução do patrimônio mês a mês, detalhando os retornos obtidos e a redistribuição dos aportes ao longo do período analisado. Estas tabelas abrangem o periodo de janeiro de 2018 até o último dia de 2023 e são fundamentais para entender o desempenho da estratégia conservadora, permitindo uma análise comparativa com outras estratégias de investimento em termos de risco e retorno.

\begin{table}[H]
    \begin{tiny}
        \begin{center}
            \begin{tabular}{cccccccccc}
                Data  & PS-BOVA11 & PS-IVVB11 & PS-CDI    & S-BOVA11     & S-IVVB11     & S-CDI      & S-Total      & Modo de Distribuição \\\hline
                01/18 & $44,94\%$ & $54,73\%$ & $0,33\%$  & R\$ 490,69   & R\$ 560,27   & R\$ 3,29   & R\$ 1.054,26 & Redistribuição Total \\
                02/18 & $49,08\%$ & $46,92\%$ & $4,00\%$  & R\$ 660,00   & R\$ 648,20   & R\$ 138,10 & R\$ 1.446,30 & Redistribuição Total \\
                03/18 & $41,95\%$ & $58,00\%$ & $0,05\%$  & R\$ 741,87   & R\$ 1.008,26 & R\$ 89,69  & R\$ 1.839,82 & Redistribuição Total \\
                04/18 & $48,06\%$ & $49,46\%$ & $2,48\%$  & R\$ 1.081,34 & R\$ 1.171,20 & R\$ 72,73  & R\$ 2.325,27 & Redistribuição Total \\
                05/18 & $51,21\%$ & $45,21\%$ & $3,58\%$  & R\$ 1.259,19 & R\$ 1.288,44 & R\$ 146,63 & R\$ 2.694,26 & Redistribuição Total \\
                06/18 & $47,89\%$ & $51,50\%$ & $0,61\%$  & R\$ 1.407,00 & R\$ 1.554,00 & R\$ 105,95 & R\$ 3.066,95 & Redistribuição Total \\
                07/18 & $31,22\%$ & $67,91\%$ & $0,87\%$  & R\$ 1.146,75 & R\$ 2.343,18 & R\$ 71,39  & R\$ 3.561,32 & Redistribuição Total \\
                08/18 & $22,55\%$ & $71,41\%$ & $6,04\%$  & R\$ 815,32   & R\$ 3.130,00 & R\$ 332,12 & R\$ 4.277,44 & Redistribuição Total \\
                09/18 & $28,73\%$ & $68,64\%$ & $2,63\%$  & R\$ 1.378,98 & R\$ 3.102,50 & R\$ 218,86 & R\$ 4.700,34 & Redistribuição Total \\
                10/18 & $10,41\%$ & $74,80\%$ & $14,79\%$ & R\$ 504,48   & R\$ 3.208,50 & R\$ 912,42 & R\$ 4.625,40 & Redistribuição Total \\
                11/18 & $11,13\%$ & $72,85\%$ & $16,01\%$ & R\$ 518,22   & R\$ 3.838,60 & R\$ 888,03 & R\$ 5.244,85 & Redistribuição Total \\
                12/18 & $36,79\%$ & $63,06\%$ & $0,15\%$  & R\$ 2.030,40 & R\$ 3.183,70 & R\$ 72,01  & R\$ 5.286,11 & Redistribuição Total \\
                \hline
            \end{tabular}
            \caption{\footnotesize Testes da Estratégia Conservadora em 2018} \label{tb:ET32018}
        \end{center}
    \end{tiny}
\end{table}

\begin{table}[H]
    \begin{tiny}
        \begin{center}
            \begin{tabular}{cccccccccc}
                Data  & PS-BOVA11 & PS-IVVB11  & PS-CDI    & S-BOVA11     & S-IVVB11     & S-CDI        & S-Total       & Modo de Distribuição \\\hline
                01/19 & $39,71\%$ & $60,23\%$  & $0,06\%$  & R\$ 2.350,00 & R\$ 3.553,68 & R\$ 82,99    & R\$ 5.986,67  & Redistribuição Total \\
                02/19 & $58,30\%$ & $33,32\%$  & $8,39\%$  & R\$ 3.582,15 & R\$ 2.225,00 & R\$ 633,69   & R\$ 6.440,84  & Redistribuição Total \\
                03/19 & $55,40\%$ & $40,69\%$  & $3,91\%$  & R\$ 3.764,21 & R\$ 2.835,60 & R\$ 403,73   & R\$ 7.003,54  & Redistribuição Total \\
                04/19 & $34,64\%$ & $63,45\%$  & $1,90\%$  & R\$ 2.507,76 & R\$ 4.791,15 & R\$ 325,70   & R\$ 7.624,61  & Redistribuição Total \\
                05/19 & $25,39\%$ & $68,99\%$  & $5,62\%$  & R\$ 2.058,98 & R\$ 5.062,20 & R\$ 575,85   & R\$ 7.697,03  & Redistribuição Total \\
                06/19 & $25,50\%$ & $69,16\%$  & $5,35\%$  & R\$ 2.039,31 & R\$ 5.745,60 & R\$ 618,03   & R\$ 8.402,94  & Redistribuição Total \\
                07/19 & $58,76\%$ & $26,77\%$  & $14,46\%$ & R\$ 5.192,94 & R\$ 2.301,09 & R\$ 1.434,51 & R\$ 8.928,54  & Redistribuição Total \\
                08/19 & $58,27\%$ & $22,74\%$  & $18,99\%$ & R\$ 5.368,55 & R\$ 2.194,36 & R\$ 1.891,52 & R\$ 9.454,43  & Redistribuição Total \\
                09/19 & $57,39\%$ & $29,36\%$  & $13,25\%$ & R\$ 5.858,58 & R\$ 2.911,04 & R\$ 1.360,20 & R\$ 10.129,82 & Redistribuição Total \\
                10/19 & $62,98\%$ & $-6,15\%$  & $43,17\%$ & R\$ 5.987,34 & R\$ 2.860,00 & R\$ 1.768,64 & R\$ 10.617,02 & Aporte Renda Fixa    \\
                11/19 & $64,18\%$ & $-12,71\%$ & $48,53\%$ & R\$ 6.052,30 & R\$ 3.159,20 & R\$ 2.176,47 & R\$ 11.434,38 & Aporte Renda Fixa    \\
                12/19 & $45,76\%$ & $54,21\%$  & $0,03\%$  & R\$ 5.672,73 & R\$ 6.241,95 & R\$ 67,78    & R\$ 11.982,46 & Redistribuição Total \\
                \hline
            \end{tabular}
            \caption{\footnotesize Testes da Estratégia Conservadora em 2019} \label{tb:ET32019}
        \end{center}
    \end{tiny}
\end{table}

\begin{table}[H]
    \begin{tiny}
        \begin{center}
            \begin{tabular}{cccccccccc}
                Data  & PS-BOVA11  & PS-IVVB11  & PS-CDI    & S-BOVA11     & S-IVVB11      & S-CDI        & S-Total       & Modo de Distribuição \\\hline
                01/20 & $35,27\%$  & $63,65\%$  & $1,08\%$  & R\$ 4.138,20 & R\$ 8.441,70  & R\$ 266,63   & R\$ 12.846,53 & Redistribuição Total \\
                02/20 & $33,19\%$  & $65,34\%$  & $1,48\%$  & R\$ 3.923,40 & R\$ 8.210,48  & R\$ 409,48   & R\$ 12.543,36 & Redistribuição Total \\
                03/20 & $18,73\%$  & $74,19\%$  & $7,08\%$  & R\$ 1.664,40 & R\$ 9.689,54  & R\$ 1.020,17 & R\$ 12.374,11 & Redistribuição Total \\
                04/20 & $-4,83\%$  & $84,07\%$  & $20,76\%$ & R\$ 1.853,04 & R\$ 11.423,50 & R\$ 1.424,21 & R\$ 14.783,49 & Aporte Renda Fixa    \\
                05/20 & $-95,26\%$ & $129,81\%$ & $65,44\%$ & R\$ 2.019,60 & R\$ 11.768,55 & R\$ 1.828,52 & R\$ 15.736,85 & Aporte Renda Fixa    \\
                06/20 & $-76,58\%$ & $129,83\%$ & $46,75\%$ & R\$ 2.198,88 & R\$ 12.175,24 & R\$ 2.233,25 & R\$ 16.770,88 & Aporte Renda Fixa    \\
                07/20 & $-64,34\%$ & $127,91\%$ & $36,43\%$ & R\$ 2.382,96 & R\$ 12.306,56 & R\$ 2.638,14 & R\$ 17.514,58 & Aporte Renda Fixa    \\
                08/20 & $-55,76\%$ & $124,58\%$ & $31,17\%$ & R\$ 2.296,80 & R\$ 13.879,05 & R\$ 3.043,00 & R\$ 19.514,45 & Aporte Renda Fixa    \\
                09/20 & $-49,53\%$ & $123,47\%$ & $26,05\%$ & R\$ 2.185,20 & R\$ 13.666,66 & R\$ 3.448,40 & R\$ 19.569,26 & Aporte Renda Fixa    \\
                10/20 & $-44,13\%$ & $119,58\%$ & $24,55\%$ & R\$ 2.175,84 & R\$ 13.574,20 & R\$ 3.854,44 & R\$ 19.864,33 & Aporte Renda Fixa    \\
                11/20 & $-49,12\%$ & $116,94\%$ & $32,18\%$ & R\$ 2.520,00 & R\$ 14.070,00 & R\$ 4.260,80 & R\$ 21.174,94 & Aporte Renda Fixa    \\
                12/20 & $-55,84\%$ & $116,90\%$ & $38,94\%$ & R\$ 2.751,60 & R\$ 14.072,68 & R\$ 4.667,77 & R\$ 21.834,28 & Aporte Renda Fixa    \\
                \hline
            \end{tabular}
            \caption{\footnotesize Testes da Estratégia Conservadora em 2020} \label{tb:ET32020}
        \end{center}
    \end{tiny}
\end{table}

\begin{table}[H]
    \begin{tiny}
        \begin{center}
            \begin{tabular}{cccccccccc}
                Data  & PS-BOVA11  & PS-IVVB11  & PS-CDI    & S-BOVA11      & S-IVVB11      & S-CDI        & S-Total       & Modo de Distribuição \\\hline
                01/21 & $-40,47\%$ & $116,85\%$ & $23,61\%$ & R\$ 2.653,44  & R\$ 14.740,00 & R\$ 5.074,97 & R\$ 22.850,93 & Aporte Renda Fixa    \\
                02/21 & $-36,56\%$ & $115,60\%$ & $20,95\%$ & R\$ 2.534,16  & R\$ 15.477,00 & R\$ 5.482,33 & R\$ 23.919,81 & Aporte Renda Fixa    \\
                03/21 & $-39,09\%$ & $114,92\%$ & $24,17\%$ & R\$ 2.688,48  & R\$ 16.294,40 & R\$ 5.894,16 & R\$ 25.375,33 & Aporte Renda Fixa    \\
                04/21 & $-31,12\%$ & $111,98\%$ & $19,12\%$ & R\$ 2.745,60  & R\$ 16.549,00 & R\$ 6.307,24 & R\$ 26.121,93 & Aporte Renda Fixa    \\
                05/21 & $41,31\%$  & $58,64\%$  & $0,05\%$  & R\$ 11.276,25 & R\$ 14.518,00 & R\$ 280,07   & R\$ 26.074,32 & Redistribuição Total \\
                06/21 & $49,01\%$  & $50,93\%$  & $0,06\%$  & R\$ 12.790,05 & R\$ 13.252,50 & R\$ 138,57   & R\$ 26.181,12 & Redistribuição Total \\
                07/21 & $54,38\%$  & $44,59\%$  & $1,03\%$  & R\$ 14.053,20 & R\$ 12.517,50 & R\$ 382,34   & R\$ 26.953,04 & Redistribuição Total \\
                08/21 & $47,51\%$  & $52,48\%$  & $0,00\%$  & R\$ 12.548,80 & R\$ 14.478,00 & R\$ 216,32   & R\$ 27.243,12 & Redistribuição Total \\
                09/21 & $35,46\%$  & $62,32\%$  & $2,22\%$  & R\$ 9.158,14  & R\$ 17.085,00 & R\$ 810,23   & R\$ 27.053,37 & Redistribuição Total \\
                10/21 & $36,52\%$  & $61,79\%$  & $1,69\%$  & R\$ 9.170,56  & R\$ 18.961,00 & R\$ 562,20   & R\$ 28.693,76 & Redistribuição Total \\
                11/21 & $33,17\%$  & $65,49\%$  & $1,34\%$  & R\$ 9.343,25  & R\$ 18.796,85 & R\$ 661,66   & R\$ 28.801,76 & Redistribuição Total \\
                12/21 & $17,54\%$  & $72,04\%$  & $10,42\%$ & R\$ 5.241,60  & R\$ 21.723,44 & R\$ 3.306,68 & R\$ 30.271,72 & Redistribuição Total \\
                \hline
            \end{tabular}
            \caption{\footnotesize Testes da Estratégia Conservadora em 2021} \label{tb:ET32021}
        \end{center}
    \end{tiny}
\end{table}

\begin{table}[H]
    \begin{tiny}
        \begin{center}
            \begin{tabular}{cccccccccc}
                Data  & PS-BOVA11  & PS-IVVB11 & PS-CDI    & S-BOVA11      & S-IVVB11      & S-CDI        & S-Total       & Modo de Distribuição \\\hline
                01/22 & $-11,54\%$ & $69,81\%$ & $41,74\%$ & R\$ 5.614,96  & R\$ 19.269,60 & R\$ 3.733,83 & R\$ 28.970,25 & Aporte Renda Fixa    \\
                02/22 & $-20,00\%$ & $68,49\%$ & $51,51\%$ & R\$ 5.662,80  & R\$ 18.167,00 & R\$ 4.165,04 & R\$ 28.181,43 & Aporte Renda Fixa    \\
                03/22 & $-12,02\%$ & $69,41\%$ & $42,61\%$ & R\$ 6.011,20  & R\$ 17.438,10 & R\$ 4.607,36 & R\$ 28.163,56 & Aporte Renda Fixa    \\
                04/22 & $9,11\%$   & $69,71\%$ & $21,19\%$ & R\$ 2.171,40  & R\$ 18.824,40 & R\$ 6.375,43 & R\$ 27.371,23 & Redistribuição Total \\
                05/22 & $14,59\%$  & $70,49\%$ & $14,92\%$ & R\$ 4.287,20  & R\$ 18.447,00 & R\$ 4.411,31 & R\$ 27.145,51 & Redistribuição Total \\
                06/22 & $47,99\%$  & $51,82\%$ & $0,19\%$  & R\$ 11.685,00 & R\$ 14.302,20 & R\$ 220,60   & R\$ 26.207,80 & Redistribuição Total \\
                07/22 & $52,93\%$  & $45,77\%$ & $1,30\%$  & R\$ 14.994,30 & R\$ 12.897,50 & R\$ 444,62   & R\$ 28.336,42 & Redistribuição Total \\
                08/22 & $66,87\%$  & $21,72\%$ & $11,41\%$ & R\$ 20.531,02 & R\$ 5.876,00  & R\$ 3.430,97 & R\$ 29.837,99 & Redistribuição Total \\
                09/22 & $66,02\%$  & $26,22\%$ & $7,76\%$  & R\$ 20.014,48 & R\$ 7.259,00  & R\$ 2.673,81 & R\$ 29.947,29 & Redistribuição Total \\
                10/22 & $41,37\%$  & $57,69\%$ & $0,94\%$  & R\$ 12.897,25 & R\$ 18.940,64 & R\$ 409,22   & R\$ 32.247,11 & Redistribuição Total \\
                11/22 & $16,70\%$  & $69,91\%$ & $13,38\%$ & R\$ 5.211,84  & R\$ 24.258,00 & R\$ 4.500,87 & R\$ 33.970,71 & Redistribuição Total \\
                12/22 & $-15,16\%$ & $73,04\%$ & $42,12\%$ & R\$ 5.085,60  & R\$ 23.400,00 & R\$ 4.953,41 & R\$ 33.207,47 & Aporte Renda Fixa    \\
                \hline
            \end{tabular}
            \caption{\footnotesize Testes da Estratégia Conservadora em 2022} \label{tb:ET32022}
        \end{center}
    \end{tiny}
\end{table}

\begin{table}[H]
    \begin{tiny}
        \begin{center}
            \begin{tabular}{cccccccccc}
                Data  & PS-BOVA11  & PS-IVVB11  & PS-CDI    & S-BOVA11      & S-IVVB11      & S-CDI         & S-Total       & Modo de Distribuição \\\hline
                01/23 & $-3,20\%$  & $67,04\%$  & $36,16\%$ & R\$ 5.272,32  & R\$ 23.732,80 & R\$ 5.413,55  & R\$ 34.213,51 & Aporte Renda Fixa    \\
                02/23 & $22,92\%$  & $63,04\%$  & $14,03\%$ & R\$ 7.383,95  & R\$ 21.783,50 & R\$ 5.223,82  & R\$ 34.391,27 & Redistribuição Total \\
                03/23 & $33,10\%$  & $59,67\%$  & $7,22\%$  & R\$ 11.030,88 & R\$ 20.677,50 & R\$ 2.845,46  & R\$ 34.553,84 & Redistribuição Total \\
                04/23 & $62,44\%$  & $35,34\%$  & $2,22\%$  & R\$ 22.399,80 & R\$ 12.176,75 & R\$ 978,71    & R\$ 35.555,26 & Redistribuição Total \\
                05/23 & $62,89\%$  & $34,37\%$  & $2,74\%$  & R\$ 23.693,84 & R\$ 12.362,25 & R\$ 996,31    & R\$ 37.052,40 & Redistribuição Total \\
                06/23 & $57,58\%$  & $41,53\%$  & $0,89\%$  & R\$ 23.290,68 & R\$ 15.761,75 & R\$ 492,85    & R\$ 39.545,28 & Redistribuição Total \\
                07/23 & $65,61\%$  & $22,91\%$  & $11,48\%$ & R\$ 26.892,60 & R\$ 9.352,98  & R\$ 4.796,85  & R\$ 41.042,43 & Redistribuição Total \\
                08/23 & $51,76\%$  & $-24,07\%$ & $72,30\%$ & R\$ 25.606,68 & R\$ 9.658,35  & R\$ 5.255,97  & R\$ 39.956,94 & Aporte Renda Fixa    \\
                09/23 & $-19,16\%$ & $52,61\%$  & $66,55\%$ & R\$ 25.798,20 & R\$ 9.312,81  & R\$ 5.710,99  & R\$ 40.200,56 & Aporte Renda Fixa    \\
                10/23 & $57,84\%$  & $38,28\%$  & $3,88\%$  & R\$ 23.239,44 & R\$ 15.210,00 & R\$ 1.653,06  & R\$ 40.102,50 & Redistribuição Total \\
                11/23 & $59,80\%$  & $0,65\%$   & $39,56\%$ & R\$ 27.061,83 & R\$ 249,70    & R\$ 16.359,01 & R\$ 43.670,54 & Redistribuição Total \\
                12/23 & $59,19\%$  & $35,11\%$  & $5,70\%$  & R\$ 27.381,90 & R\$ 15.971,82 & R\$ 2.723,60  & R\$ 46.077,32 & Redistribuição Total \\
                \hline
            \end{tabular}
            \caption{\footnotesize Testes da Estratégia Conservadora em 2023} \label{tb:ET32023}
        \end{center}
    \end{tiny}
\end{table}

Podemos constatar que claramente a Estratégia Conservadora acabou sendo um pouco inferior a Estratégia do Portfólio Eficiente mas ainda teve um desempenho consideravel e acima do CDI, um fato interessante de observar é que apartir de dezembro de 2021 até o final dos nossos investimentos a estratégia optou por fazer todos seus aportes no CDI e isso se deve ao fato do aumento consideravel do retorno do CDI nesse periodo em contrapartida com o valor do BOVA11 e IVVB11, na proxima seção apresentaremos mais detalhes sobre os resultados obtidos em todas as estratégias apresentadas.

%%%%%%%%%%%%%%%%%%%%%%%%%%%%%%%%%%%%%%%%%%%%%%%%%%%%%%%%%%%%%%%%%%%%%%%

\subsection{Análise da Rentabilidade}

Vamos agora analisar a rentabilidade das estratégias, para isso, inicialmente, vamos utilizar a evolução do patrimônio nas estratégias apresentadas.

Ao desenvolver as estratégias nosso objetivo era alcançar, a longo prazo, um retorno superior ao CDI  aceitando um risco ligeiramente maior.

Inicialmente, a Estratégia do Portfólio Eficiente utilizava o CDI apenas para o cálculo das porcentagens, no entanto, enfrentávamos alta volatilidade com a realização de operaçoes de venda a descoberto. Para mitigar isso, passamos a incorporar o CDI de maneira ativa como um ativo de reserva, o que nos ajudou a evitar essas vendas.

Por sua vez, a  Estratégia Conservadora incorpora cálculos matemáticos que já preveem o investimento direto no CDI, tornando-o uma parte ativa da estratégia e não apenas um meio para evitar as vendas a descoberto. A essência da Estratégia conservadora é utilizar a distribuição da Estratégia Eficiente mantendo o risco próximo do risco da Estratégia de Minimo Risco.

A seguir, apresentamos na figura \ref{fig:PatrimonioTotal} do patrimônio total ao longo do período máximo analisado, que vai de 01 dejaneiro de 2018 até 31 de dezembro de 2023 tendo aporte inicial de R\$ $1.000,00$ em janeiro de 2018 e aportes mensais de R\$ $400,00$.

\begin{figure}[H]
    \centering
    \includegraphics[width=0.9\linewidth]{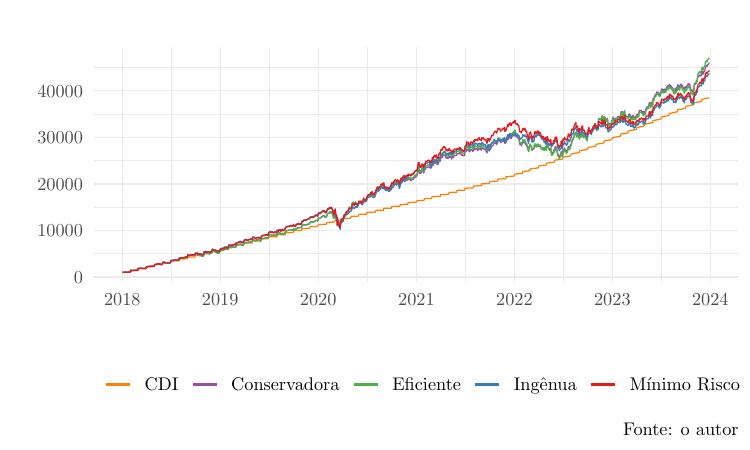}\\
    \caption{ \footnotesize Gráfico do Patrimônio Final}
    \label{fig:PatrimonioTotal}
\end{figure}

Nessa figura fica evidente que todas as estratégias propostas superaram o CDI, sendo o maior patrimônio alcaçado pela Estratégia do Portfólio Eficiente. Como discutido anteriormente todas as estratégias propostas foram feitas com objetivo de um investimento a longo prazo.

Durante o periodo de 2018 até o final de 2023 ocorreu uma grande ocilaçao no CDI que passou de cerca de 2\% para mais de 13\% de retorno. Ainda assim o investimento puramente no CDI foi a estratégia que resultou no menor patrimônio.

Na Tabela~\ref{tab:juros} e Figura~\ref{fig:grafju}  ilustramos  o percentual entre capital referente aos juros e o aportado nas estratégias durante o período de seis  anos, o que nos possibilitará no decorrer desse estudo analisar melhor a importancia do investimento a longo prazo.

\begin{table}[h]
    \begin{footnotesize}
        \begin{center}
            \begin{tabular}{lrrcc}

                \textbf{Ativos }                  & \textbf{Capital Aportado} & \textbf{Capital Final} & \textbf{Percentual aportes} & \textbf{Percentual juros} \\
                \hline\rule{-3pt}{1.1\normalbaselineskip}
                CDI                               & R\$ $29.400,00$           & R\$ $38.489,70$        & $76,38\%$                   & $23,62\%$                 \\

                Estratégia Ingênua                & R\$ $29.400,00$           & R\$ $43.801,49$        & $67,12\%$                   & $32,88\%$                 \\

                Estratégia de Mínimo Risco        & R\$ $29.400,00$           & R\$ $44.382,68$        & $66,24\%$                   & $33,76\%$                 \\
                Estratégia Conservadora           & R\$ $29.400,00$           & R\$ $46.077,32$        & $63,81\%$                   & $36,19\%$                 \\
                Estratégia do Portfólio Eficiente & R\$ $29.400,00$           & R\$ $47.090,88$        & $62,43\%$                   & $37,57\%$
                \\
                \hline
            \end{tabular}
            \caption{\footnotesize Valor do aporte e juros no patrimônio} \label{tab:juros}
        \end{center}
    \end{footnotesize}
\end{table}

\begin{figure}[H]
    \centering
    \includegraphics[width=0.7\linewidth]{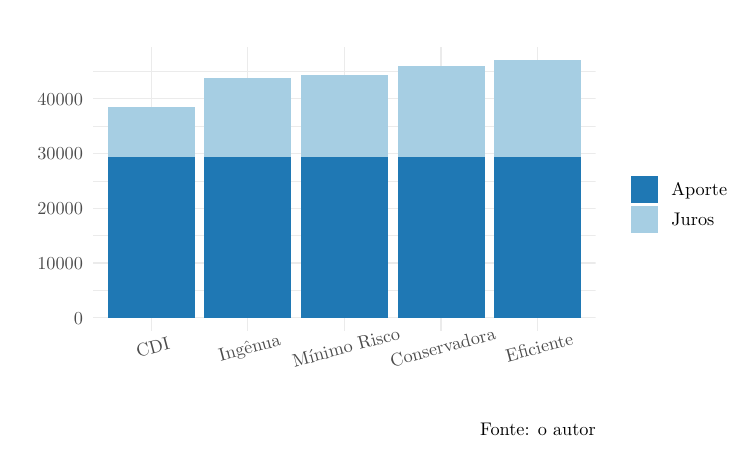}\\
    \caption{\footnotesize Valor do aporte e juros no patrimônio}
    \label{fig:grafju}
\end{figure}

Destacamos aqui a importância que um bom investimento faz na construção do patrimônio. Note que no pior dos casos, $23\%$ do patrimônio é composto de juros e no melhor dos casos esse percentual foi de $36\%$. O retorno nos investimentos deve ser encarado como um bom ajudante que ``contribui ativametne'' na construção do patrimônio. Quem investiu apenas no CDI teve um bom ``ajudante'', mas quem investiu na Estratégia do Portfólio Eficiente teve um ajudante melhor. Lembramos ainda que aquele que poupa e investe parte do seu ganho todo mês está deixando de usufruir do dinheiro no presente para usufruir do capital no futuro e, com um bom investimento ele será recompensado e consumirá mais no futuro pois o capital total crescerá junto com seu esforço e com a ``ajuda'' dos juros compostos.

É possível notar claramente a importância dos juros no patrimônio quando a estrategia tem um tempo de duração considerável. Ao analisar a Tabela \ref{tab:juros}, observamos que, conforme o tempo avança, o percentual de juros em todas as estratégias tende a se expandir de forma que, em um horizonte temporal mais extenso, o aporte inicial de R\$ $400,00$ se tornará ínfimo comparativamente ao volume total acumulado. Esse fenômeno ressalta a dinâmica do efeito dos juros compostos, onde os retornos sobre os ganhos anteriores começam a constituir uma fração substancial do montante, destacando a importância do tempo e da regularidade dos aportes no crescimento do patrimônio a longo prazo.

Além disso, é notável que durante o período analisado, presenciamos uma série de eventos significativos que fornecem ricas oportunidades de análise. Notavelmente, a pandemia global de 2020 destaca-se como um evento transformador, provocando uma queda generalizada nos mercados de ações. Essa tendência negativa é visível na performance de todas as estratégias em nosso gráfico. Em momentos críticos como esse, é crucial observar qual estratégia apresenta a menor retração, visto que a recuperação se torna mais desafiadora quanto maior a perda de patrimônio; recuperar e, por conseguinte, incrementar o capital inicial pode ser um processo árduo. Sob essa ótica, a  Estratégia Conservadora, com sua alocação ativa em CDI, demonstrou maior resiliência, sofrendo a menor queda e exibindo uma recuperação mais ágil.

Em contrapartida, o comportamento do CDI no intervalo entre 2018 e final de 2023 foi notavelmente atípico. Observamos períodos como o da pandemia, onde o CDI teve retornos minimalistas, seguidos por uma ascensão pós-pandêmica, atingindo níveis desafiadores para o investidor individual, considerando o perfil de risco quase nulo. Embora esse surto de crescimento seja um fenômeno fora do comum e possivelmente um prenúncio de uma eventual estabilização, o que desejo ressaltar é que, mesmo considerando as flutuações do CDI, todas as estratégias tiveram desempenho superior ao CDI, como pode ser discernido no gráfico fornecido.

Apesar da análise do patrimônio final nos fornecer alguma visão sobre as estratégias, a análise padrão para comparar investimentos distintos é a cotização (veja a Seção \ref{cotizacao}) dos mesmos. A Tabela \ref{Tab:Ret} contém o retorno acumulado no período analisado, podemos ver que todas as estratégias superaram de longe o retorno alcançado pelo CDI.

\begin{table}[H]
    \begin{center}
        \begin{tabular}{ccccc}

            \textbf{CDI} & \textbf{Conservadora} & \textbf{Ingênua} & \textbf{Eficiente} & \textbf{Mínimo Risco} \\
            \hline\rule{-3pt}{1.1\normalbaselineskip}
            $53,68\%$    & $129,82\%$            & $135,46\%$       & $136,00\%$         & $140,41\%$
        \end{tabular}
        \caption{ \footnotesize Retorno acumulado no período  $01/2018$ e $12/2023$.}
        \label{Tab:Ret}
    \end{center}
\end{table}

Olhando para os dados anualizados vemos, na Tabela \ref{Tab:AnaRet}, que o retorno anual médio de todas as estratégias superaram $15\%$ a.a. algo que podemos considerar um retorno excelente. Na mesma tabela, encontramos o Índice Sharpe de cada estratégia. Esse índice mede o retorno ajustado ao risco, isto é, o retono que a estratégia teve (acima do CDI) para cada real investido. Por exemplo a Estratégia de Mínimo Risco teve um Índice Sharpe de $0,42$ isso significa que essa estratégia ganhou R\$0,42 acima do CDI para cada real colocado em risco, o que configura um bom investimento. Naturalmente, quanto maior o Índice Sharpe mais a estratégia retornou considerando o risco que correu. Por essa análise as estratégias que buscavam menor risco, a Estratégia Conservadora e a de Minimo Risco, foram as que tiveram os melhores índices. Ainda podemos observar que as Estratégias Îngênuas e Eficiente tiveram retornos melhores que a Estratégia Conservadora, mas correram mais risco para alcançar tais retornos logo tiveram índices menores.

\begin{table}[ht]
    \centering
    \begin{tabular}{lccccc}
                           & {\textbf{CDI}} & {\textbf{Ingênua}} & {\textbf{Eficiente}} & {\textbf{Conservadora}} & {\textbf{Mínimo Risco}} \\
        \hline\rule{-3pt}{1.1\normalbaselineskip}
        Retorno Anualizado & 7,47\%         & 15,70\%            & 15,70\%              & 15,20\%                 & 16,10\%                 \\
        Desvio Padrão      &                & 0.185              & 0.186                & 0.171                   & 0.184                   \\
        Índice Sharpe      &                & 0.398              & 0.398                & 0.404                   & 0.420                   \\
        \hline
    \end{tabular}
    \caption{ \footnotesize Análise dos retornos anualizados.}
    \label{Tab:AnaRet}
\end{table}

O Gráfico \ref{fig:grafret} mostra a rentabilidade durante todo o período analisado.

\begin{figure}[H]
    \centering
    \includegraphics[width=0.9\linewidth]{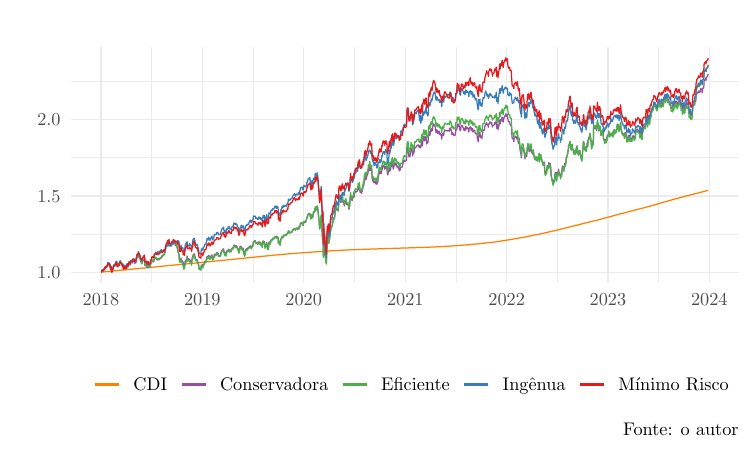}\\
    \caption{\footnotesize Gráfico dos Retornos}
    \label{fig:grafret}
\end{figure}

%%%%%%%%%%%%%%%%%%%%%%%%%%%%%%%%%%%%%%%%%%%%%%%%%%%%%%%%%%%%%%%%%%%%%%%
\section{Conclusão} \label{sec:conclusion}
Enfatizamos que não estamos recomendando o uso da estratégia discutida na Seção~\ref{Sec:Aplicacao}. Nosso objetivo é apenas demonstrar que, utilizando alguns conceitos básicos de finanças, podemos melhorar o desempenho de um determinado portfólio. De fato, as estratégias discutidas na Seção~\ref{Sec:Aplicacao}  possuem considerações importantes para serem aplicadas na prática. Fizemos algumas hipóteses e utilizamos dois ativos muito específicos, e mesmo assim, alguns ajustes são necessários para a aplicação prática da estratégia. Por exemplo, nas Estratégias Ingênua e de Mínimo Risco, seria mais natural considerar a seguinte variação da estratégia mencionada:

\begin{enumerate}
    \item[(APR)] Aumentar o tamanho do portfólio realizando {\it aportes periódicos (AP)} com rebalanceamento preestabelecido. Neste caso, a cada período pré-definido, comprar os ativos com proporções especificadas previamente pelo balanceamento desejado.
\end{enumerate}
Há várias opções para os aportes periódicos, por exemplo:
\begin{enumerate}
    \item[(AI)] {\it Aporte Ingênuo}: Neste caso, os aportes seriam feitos nos dois ativos de modo que, ao final do aporte, a proporção de cada ativo na carteira seja a mais próxima possível de $50\%$;
    \item[(AMR)] {\it Aporte de Mínimo Risco}: Neste caso, os aportes seriam feitos nos dois ativos de modo que, ao final do aporte, a proporção de cada ativo na carteira seja a mais próxima possível do vetor de alocação de capital dado pelo Corolário~\ref{cr:prm}.
\end{enumerate}
Note que, na estratégia acima, estaremos aportando nos dois ativos, o que implica em um custo de transação maior do que na estratégia da Seção~\ref{Sec:Aplicacao}. Naturalmente, se os aportes forem significativos, o custo de transação pode ser considerado desprezível. Embora esta estratégia possa ser superior à estratégia da Seção~\ref{Sec:Aplicacao}, ela não é recomendada para quaisquer dois ativos.

Esperamos que, com este artigo, possamos estimular um maior interesse e aprofundamento no estudo das finanças, incentivando a exploração de estratégias inovadoras, a análise crítica de métodos tradicionais e a aplicação prática de conceitos financeiros para a otimização de portfólios e a gestão de investimentos. Além disso, almejamos contribuir para o desenvolvimento de uma base sólida de conhecimento que possa apoiar tanto acadêmicos quanto profissionais na tomada de decisões financeiras mais informadas e eficazes.

%\bibliographystyle{abbrv}
%\bibliography{BibtexPortfolioOptimization}

\end{document}